\title{Smoothing effect and large time behavior of solutions to 
nonlinear elastic wave equations \\
with viscoelastic term}
\author[1]{Yoshiyuki Kagei}
\author[2]{Hiroshi Takeda}
\affil[1]{Department of Mathematics, 
Tokyo Institute of Technology, 
Meguro-ku, Tokyo, 152-8551, Japan}
\affil[2]{Department of Intelligent Mechanical Engineering, 
Faculty of Engineering, Fukuoka Institute of Technology, 
3-30-1 Wajiro-higashi, Higashi-ku, Fukuoka, 811-0295, Japan}
\date{}
\newcommand{\R}{\mathbb R}
\newcommand{\supp}{\mathop{\mathrm{supp}}\nolimits}
\newtheorem{thm}{Theorem}[section]
\newtheorem{cor}[thm]{Corollary}
\newtheorem{prop}[thm]{Proposition}
\newtheorem{lem}[thm]{Lemma}
\theoremstyle{remark}
\newtheorem{rem}[thm]{Remark}
\theoremstyle{definition}
\begin{document}
\maketitle

\numberwithin{equation}{section}

\begin{abstract}
The Cauchy problem for a nonlinear elastic wave equations with viscoelastic damping terms is considered on the 3 dimensional whole space. Decay and smoothing properties of the solutions are investigated when the initial data are sufficiently small; 
and asymptotic profiles as $t \to \infty$ are also derived. 
\end{abstract}

\noindent
\textbf{Keywords: }nonlinear elastic wave equation, damping terms, consistency, smoothing effect, asymptotic profile, the Cauchy problem \\
\noindent
\textbf{MSC2020: }Primary 35L72; Secondary 35B40, 35B65

\newpage
\section{Introduction}

In this paper we consider the Cauchy problem for the system of quasi-linear elastic equations
with strong damping:
\begin{equation} \label{eq:1.1}
\left\{
\begin{split}
& \partial_{t}^{2} u -\mu \Delta u - (\lambda + \mu) \nabla {\rm div} u -\nu \Delta \partial_{t} u  =F(u), \quad t>0, \quad x \in \R^{3}, \\
& u(0,x)=f_{0}(x), \quad \partial_{t} u(0,x)=f_{1}(x) , \quad x \in \R^{3}, 
\end{split}
\right.
\end{equation}
where $u={}^t\! (u_{1}, u_{2}, u_{3})$ is the unknown function; and
$f_{j}={}^t\! (f_{j1}, f_{j2}, f_{j3})$ $(j=0,1)$ are initial data.
Here and in what follows the superscript ${}^t\! \cdot$ stands for the transpose of the matrix. 
We assume that the Lam\'e constants satisfy 
\begin{equation} \label{eq:1.222}
\mu>0, \quad \lambda + 2 \mu >0, 
\end{equation}
and the viscosity parameter $\nu$ is positive. 
We also assume that the nonlinear term $F(u)$ is given by $\nabla u \nabla D u$,
where $\nabla$ is the spatial gradient and $D$ represents the $t,x$ gradient.
As is mentioned in \cite{J-S}, the Cauchy problem \eqref{eq:1.1} serves as a simplified, dimensionless model for viscoelasticity.   
It is also known the relationship with the fluid dynamics (see \cite{Ponce}, \cite{GMcM}, \cite{A} and \cite{PoF} for the detail).
Here we note that
for more general setting, the existence of sufficiently smooth and small solutions are well-investigated (see e.g. \cite{KS} for bounded domain, \cite{KPS} and \cite{ST} for unbouded domains). 

On the other hand, 
our concern in this paper is large time behavior of the solution to \eqref{eq:1.1}, including sharp time decay properties.
When $\lambda +\mu=0$, the interaction of each components of the solutions are from the nonlinear term only.
In this case, 
Ponce \cite{Ponce} proved the existence of global solutions in $L^{2}$-Sobolev spaces:
$$
C([0,\infty); \dot{H}^{s_{0}} \cap \dot{H}^{1} ) \cap C^{1}([0,\infty); H^{s_{0}} ) \cap C^{2}([0,\infty); H^{s_{0}-2} ) 
$$
with an integer $s_{0} > \frac{7}{2}$ for small initial data  $(f_{0}, f_{1}) \in \{ \dot{H}^{s_{0}} \cap \dot{H}^{1} \cap \dot{W}^{1,1} \}^{3} \times  \{H^{s_{0}} \cap {L}^{1} \}^{3}$
and obtained decay estimates 
\begin{equation}  \label{eq:1.333}
\| \nabla^{\alpha} u(t) \|_{L^{q}(\R^{3})} \le Ct^{-\frac{3}{2}(1-\frac{1}{q})+\frac{1}{2}-\frac{\alpha}{2}}
\end{equation}
for $2 \le q \le \infty$ and $1 \le \alpha \le 3$.
The method of proof in \cite{Ponce} is based on the energy method for the higher order derivatives of the solution and the 
$L^{p}$-$L^{q}$ type estimates for the fundamental solution of the linearized equation that are obtained by the parabolic aspect of \eqref{eq:1.1}.
He also obtained faster decay properties with the special case $f_{1}(x)= \nabla g_{1}(x)$ and the nonlinear term is given by the divergence form, 
applying same method.
Jonov-Sideris \cite{J-S} considered \eqref{eq:1.1} with $\lambda +\mu=0$, when the nonlinear terms are decomposed into two parts;
$$
F(u) =F_{0}(u)+ \delta F_{1}(u),  
$$
where $F_{0}(u)$ satisfies the Klainerman null condition (cf. \cite{Klainerman}, \cite{Chris}, \cite{Sideris} and \cite{Agemi})
and the constant $|\delta|$ is sufficiently small, which means that $\delta F_{1}(u)$ is a small perturbation of $F_{0}(u)$.
They quantify the influence of the parameters $\nu$, $\delta$ and the size of the initial data to ensure the global existence of the solutions in the weighted $L^{2}$-Sobolev space with high regularity.
As a result, they have decay properties of the solutions. 
Their proof relied on the weighted energy method based on the vector field of wave equations.
For this kind of argument, we also refer to \cite{Alinhac} and \cite{Strauss}.

In the contrast, we deal with the case $\lambda +\mu \neq 0$ without any structural condition on the nonlinear terms like null condition and investigate the smoothing effect and large time behavior of small solutions to \eqref{eq:1.1}.

We shall obtain the global solutions and decay properties without ``derivative-loss'', which is our first result.
In what follows, we only consider the case where the nonlinear term $F(u)$ contains only $x$ derivatives. 
The precise statements of our results are formulated as follows.
%
\begin{thm} \label{thm:1.1}
Suppose that $F(u) =\nabla u \nabla^{2} u$.
Let $(f_{0}, f_{1}) \in \{ \dot{H}^{3} \cap \dot{W}^{1,1} \}^{3} \times  \{H^{1} \cap {L}^{1} \}^{3}$ with sufficiently small norms. 
Then there exists a unique global solution to \eqref{eq:1.1} in the class
$$
\{ C([0,\infty); \dot{H}^{3} \cap \dot{H}^{1} ) \cap C^{1}([0,\infty); H^{1}) \}^{3}
$$
satisfying the following time decay properties:
\begin{equation} \label{eq:1.2}
\begin{split}
\| \nabla^{\alpha} u(t) \|_{L^{2}(\R^{3})} & 
\le C (1+t)^{-\frac{1}{4}-\frac{\alpha}{2}}, \quad 1 \le \alpha \le 3, \\
\| \partial_{t}  \nabla^{\alpha} u(t) \|_{L^{2}(\R^{3})} & 
\le C (1+t)^{-\frac{3}{4}-\frac{\alpha}{2}}, \quad 0 \le \alpha \le 1
\end{split}
\end{equation}
for $t \ge 0$. 
\end{thm}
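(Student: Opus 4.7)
My plan is to combine sharp $L^p$--$L^2$ decay estimates for the linearized semigroup with a Duhamel-type contraction argument in a time-weighted function space, complementing it by a standard energy identity at the top order to control the apparent derivative loss in the nonlinearity $F(u)=\nabla u\,\nabla^2 u$. Global existence, the decay rates in \eqref{eq:1.2}, and the smoothing from $f_0 \in \dot W^{1,1}$ to $u(t) \in \dot H^1$ for $t > 0$ should all emerge simultaneously from this single bootstrap scheme, provided the data are small enough.

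For the linear analysis, passing to Fourier variables and applying the Helmholtz decomposition splits the linear part of \eqref{eq:1.1} into two scalar strongly-damped wave equations with symbols $\partial_t^2 + a_j|\xi|^2 + \nu|\xi|^2\partial_t$, where $a_1=\mu>0$ and $a_2=\lambda+2\mu>0$ by \eqref{eq:1.222}. The characteristic roots
\[
\lambda_{\pm}(\xi) = \tfrac{1}{2}\bigl(-\nu|\xi|^2 \pm \sqrt{\nu^2|\xi|^4 - 4a_j|\xi|^2}\bigr)
\]
produce, at low frequency, oscillation with Gaussian envelope $e^{-\nu t|\xi|^2/2}$ together with a singular factor $\sim|\xi|^{-1}$ coming from $\lambda_+ - \lambda_-$; at high frequency they split into a bounded root (giving exponential decay) and a root $\sim -\nu|\xi|^2$ (providing parabolic smoothing). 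Straightforward Plancherel arguments then give
\[
\|\nabla^\alpha S(t)(f_0,f_1)\|_{L^2} \lesssim (1+t)^{-\frac{3}{2}(\frac{1}{p}-\frac{1}{2})-\frac{\alpha}{2}}\bigl(\|\nabla f_0\|_{L^p}+\|f_1\|_{L^p}\bigr) + e^{-ct}\bigl(\|f_0\|_{H^k}+\|f_1\|_{H^{k-1}}\bigr),
\]
the extra $\nabla$ in front of $f_0$ reflecting the $|\xi|^{-1}$ singularity of the symbol. Taking $p=1$ recovers the rates in \eqref{eq:1.2}, and the analogous estimate for $\partial_t u$ gains an extra factor $|\xi|$ in Fourier (since $\partial_t\hat{u}\sim\lambda_\pm\hat{u}$), yielding the faster $(1+t)^{-3/4-\alpha/2}$ rate.

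I would then set up the nonlinear iteration in the time-weighted norm
\[
\|u\|_X := \sup_{t\geq 0}\Bigl\{\sum_{\alpha=1}^{3}(1+t)^{\frac{1}{4}+\frac{\alpha}{2}}\|\nabla^\alpha u(t)\|_{L^2} + \sum_{\alpha=0}^{1}(1+t)^{\frac{3}{4}+\frac{\alpha}{2}}\|\partial_t\nabla^\alpha u(t)\|_{L^2}\Bigr\},
\]
and estimate the nonlinearity by
\[
\|F(u)\|_{L^1}\lesssim\|\nabla u\|_{L^2}\|\nabla^2 u\|_{L^2}\lesssim\|u\|_X^2(1+t)^{-2},
\]
together with an $L^2$-bound via $\|F(u)\|_{L^2}\lesssim\|\nabla u\|_{L^\infty}\|\nabla^2 u\|_{L^2}$ and the Gagliardo--Nirenberg inequality $\|\nabla u\|_{L^\infty}\lesssim\|\nabla^2 u\|_{L^2}^{1/2}\|\nabla^3 u\|_{L^2}^{1/2}$. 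Splitting the Duhamel integral at $s=t/2$, using the $L^1$ bound on $[0,t/2]$ and the high-frequency smoothing to absorb at most a half-derivative onto $S(t-s)$ on $[t/2,t]$, the nonlinear contribution is dominated by $C\|u\|_X^2(1+t)^{-\frac{1}{4}-\frac{\alpha}{2}}$ for $\alpha \in \{1,2\}$ and analogously for the $\partial_t$-norms, so the bootstrap closes at these orders for small data.

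The main obstacle is the top-order estimate $\alpha=3$, where the Duhamel route meets a genuine derivative loss: applying $\nabla^3$ either to $F(u)$ or to $S(t-s)$ produces, respectively, $\nabla^5 u$ on the right-hand side or a non-integrable singularity $(t-s)^{-3/2}$. I would resolve this by complementing the Duhamel analysis with an energy identity obtained by applying $\nabla^2$ to \eqref{eq:1.1} and pairing with $\nabla^2\partial_t u$; the strong damping $-\nu\Delta\partial_t u$ then contributes the dissipation $\nu\|\nabla^3\partial_t u\|_{L^2}^2$, and after integration by parts the most dangerous contribution in $\int\nabla^2F(u)\cdot\nabla^2\partial_t u\,dx$ takes the schematic form $\int \nabla u\,\nabla^3 u\,\nabla^3\partial_t u\,dx$, which is absorbed into the dissipation by Young's inequality using the lower-order decay for $\nabla u$ and $\nabla^3 u$ already provided by the Duhamel step. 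The delicate point is thus orchestrating the two pieces (Duhamel for the sharp polynomial rates on $\nabla^\alpha u$ at $\alpha\leq 2$ and on $\partial_t\nabla^\alpha u$; energy for $H^3$ regularity and the top-order decay) into a single self-consistent a priori estimate that closes for small data.
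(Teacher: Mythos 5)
Your overall framework coincides with the paper's: diagonalize the symbol via the Helmholtz-type projection into two scalar strongly damped wave equations, derive $L^1$--$L^2$ decay for the low-frequency part of the propagators, and close a fixed-point argument in exactly the time-weighted norm $X_1$ that the paper uses, with the same nonlinear bounds $\|F(u)\|_1\lesssim (1+t)^{-2}\|u\|_{X_1}^2$ and $\|F(u)\|_2\lesssim (1+t)^{-11/4}\|u\|_{X_1}^2$. Where you diverge is at the top order $\alpha=3$, and there your diagnosis of a ``genuine derivative loss'' is not correct: you have overlooked that the Duhamel kernel $\mathcal{K}_1^{(\beta)}(t,\xi)=(e^{\sigma_+t}-e^{\sigma_-t})/(\sigma_+-\sigma_-)$ carries the factor $(\sigma_+-\sigma_-)^{-1}\sim \nu^{-1}|\xi|^{-2}$ at high frequencies, so that $|\xi|^3|\mathcal{K}_{1}^{(\beta)}(t,\xi)|\le Ce^{-ct}|\xi|$ there. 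This is exactly the paper's estimate \eqref{eq:4.7}, $\|\nabla^3 K_{1H}^{(\beta)}(t)g\|_2\le Ce^{-ct}\|\nabla g\|_2$, which reduces the top-order Duhamel contribution to $\int_0^t e^{-c(t-\tau)}\|\nabla F(u)(\tau)\|_2\,d\tau$ with $\nabla F(u)=\nabla^2u\,\nabla^2u+\nabla u\,\nabla^3u$ involving at most three derivatives of $u$ and decaying like $(1+\tau)^{-13/4}$. Neither the $\nabla^5 u$ term nor the $(t-s)^{-3/2}$ singularity you fear ever appears, and the contraction closes purely by Duhamel.

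Your proposed repair --- a $\nabla^2$-energy identity paired with $\nabla^2\partial_t u$ --- is the classical route (essentially Ponce's), and the absorption of $\int\nabla u\,\nabla^3u\,\nabla^3\partial_t u\,dx$ into the dissipation $\nu\|\nabla^3\partial_t u\|_2^2$ is plausible. But as written it leaves the hardest step unaddressed: an unweighted energy inequality only yields boundedness of $\|\nabla^3u(t)\|_2$, not the sharp rate $(1+t)^{-7/4}$ claimed in \eqref{eq:1.2}; to extract that rate you would need a time-weighted energy argument (multiplying by $(1+t)^{7/2}$ and controlling the commutator and lower-order terms), which is a substantial piece of work you have not supplied. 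Moreover the theorem also asserts $\|\nabla\partial_t u(t)\|_2\lesssim (1+t)^{-5/4}$, which your energy functional does not directly produce either. So either carry out the weighted energy scheme in full, or --- more simply and in line with the paper --- record the high-frequency gain of two derivatives in $\mathcal{K}_1^{(\beta)}$ and let the Duhamel argument close at all orders.
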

\begin{rem}
	In Theorem \ref{thm:1.1}, 
	we used the facts that $\dot{H}^{3} \cap \dot{W}^{1,1}=\dot{H}^{3} \cap \dot{H}^{1} \cap \dot{W}^{1,1}$ and 
	$\dot{H}^{k} \cap L^{1}=H^{k} \cap L^{1}$ for $k=0,1$, which are shown later in preliminaries. 
\end{rem}
Next we state the smoothing effect of global solutions.
\begin{thm} \label{thm:1.3}
	The global solution $u(t)$ constructed in Theorem \ref{thm:1.1} satisfies 
	\begin{equation*}
		\begin{split}
			u \in \{ C^{1}( (0,\infty); \bigcup_{2 \le p < 6} \dot{W}^{2,p}) 
			\cap W^{1, \infty}( 0,\infty; W^{1,\infty}) \cap C^{2} ((0,\infty); \bigcup_{2 \le p < 6} L^{p} ) \}^{3}
		\end{split}
	\end{equation*}
	 and 
	 \begin{align}
	 		\| \nabla^{\alpha} u(t) \|_{L^{\infty}(\R^{3})} & 
	 		\le C (1+t)^{-\frac{3}{2}-\frac{\alpha}{2}}, \quad 0 \le \alpha \le 1 \label{eq:1.3}
	 \end{align}
	for $t \ge 0$ and 
 \begin{align}
	\| \nabla^{2} \partial_{t} u(t) \|_{L^{p}(\R^{3})} & \le C(1+t)^{-\frac{7}{4}+\frac{1}{p}} t^{-\frac{5}{4}+\frac{3}{2p} }, \quad 2 \le p <6, \label{eq:1.4}\\
	\| \nabla^{\alpha} \partial_{t}  u(t) \|_{L^{\infty}(\R^{3})} & 
	\le C(1+t)^{-\frac{7}{4}} t^{-\frac{1}{4}-\frac{\alpha}{2}},\quad 0 \le \alpha \le 1,    \label{eq:1.5} \\
	\| \partial_{t}^{2}  u(t) \|_{L^{p}(\R^{3})} & 
	\le C (1+t)^{-\frac{5}{4}+\frac{1}{p}} t^{-\frac{5}{4}+\frac{3}{2p}}, \quad 2 \le p <6   \label{eq:1.6} 
\end{align}
for $t>0$. 
\end{thm}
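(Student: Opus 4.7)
The plan is to substitute the global solution from Theorem \ref{thm:1.1} into the Duhamel representation
$$ u(t) = S_0(t) f_0 + S_1(t) f_1 + \int_0^t S_1(t-s)\, F(u(s))\, ds, $$
where $S_0(t)$, $S_1(t)$ are the solution operators of the linearization of \eqref{eq:1.1}, and to exploit the $L^p$-$L^q$ smoothing bounds for $S_0$, $S_1$ established earlier in the paper. The decisive feature of these operators, a consequence of the viscoelastic term $-\nu\Delta\partial_t u$, is that at high frequencies they behave like the heat semigroup: each spatial derivative of $S_1(t)$ costs a factor $t^{-1/2}$ at small $t$, each time derivative costs a factor $t^{-1}$, while at low frequencies they enjoy polynomial heat-type decay.

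I would first establish \eqref{eq:1.3}. From Theorem \ref{thm:1.1} together with H\"older and the Sobolev embedding $\dot H^1(\R^3)\hookrightarrow L^6(\R^3)$ one gets
$$ \|F(u(s))\|_{L^1} \lesssim \|\nabla u(s)\|_{L^2}\|\nabla^2 u(s)\|_{L^2} \lesssim (1+s)^{-2}, $$
$$ \|F(u(s))\|_{L^{3/2}} \lesssim \|\nabla u(s)\|_{L^6}\|\nabla^2 u(s)\|_{L^2} \lesssim (1+s)^{-5/2}. $$
Splitting the Duhamel integral at $s=t/2$ and combining an $L^1\to L^\infty$ long-time bound $\|\nabla^\alpha S_1(t-s)g\|_{L^\infty}\lesssim (t-s)^{-3/2-\alpha/2}\|g\|_{L^1}$ on $[0,t/2]$ with an $L^{3/2}\to L^\infty$ short-time estimate with integrable smoothing singularity on $[t/2,t]$, both pieces yield $(1+t)^{-3/2-\alpha/2}$ for $0\le\alpha\le 1$; the linear-data contribution is controlled in the same manner using $(f_0,f_1)\in\dot W^{1,1}\times L^1$.

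For \eqref{eq:1.4} and \eqref{eq:1.5}, differentiate the Duhamel formula once in time. Because $\partial_t S_1(t)$ inherits one additional order of heat-type smoothing at small $t$, the corresponding $L^p\to L^q$ bound carries an extra time-weight of the form $t^{-5/4+3/(2p)}$ or $t^{-1/4-\alpha/2}$ depending on the choice of norms, which is exactly the source of the singular factors in \eqref{eq:1.4}--\eqref{eq:1.5}. Splitting again at $s=t/2$: on $[0,t/2]$ one uses the long-time decay of $\partial_t S_1(t-s)$ against $\|F(u)\|_{L^1}$ or $\|F(u)\|_{L^{3/2}}$; on $[t/2,t]$ one uses the smoothing bound against $\|F(u)\|_{L^2}\lesssim \|\nabla u\|_{L^\infty}\|\nabla^2 u\|_{L^2}$ (inserting the just-proved \eqref{eq:1.3} with $\alpha=1$), noting that the singularity in $(t-s)$ is integrable on this interval precisely when $p<6$. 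Finally, \eqref{eq:1.6} follows at once by solving \eqref{eq:1.1} for $\partial_t^2 u$,
$$ \partial_t^2 u = \mu\Delta u + (\lambda+\mu)\nabla\,\mathrm{div}\, u + \nu\Delta\partial_t u + F(u), $$
and combining the $L^p$ bounds just derived for $\nabla^2\partial_t u$ and $F(u)$ with an analogous $L^p$ bound on $\nabla^2 u$, obtained by the same Duhamel technique. The asserted regularity classes for $u$ then follow from continuity in $t$ of the Duhamel integral with respect to the norms just controlled.

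The main difficulty I expect is the precise bookkeeping of exponents needed to reach exactly the double-scale weight $(1+t)^{-\cdots}t^{-\cdots}$ stated in \eqref{eq:1.4}, \eqref{eq:1.5} and \eqref{eq:1.6}. One must select the correct $L^p$-$L^q$ smoothing bound at each step so that the kernel singularity in $(t-s)$ remains integrable as $s\to t$ while the polynomial decay in $(1+s)$ simultaneously matches the long-time rate; the upper limit $p<6$ in these estimates appears naturally as the threshold where both this kernel singularity is integrable and the Sobolev endpoint $\dot H^1\hookrightarrow L^6$ continues to furnish the required $L^p$ control on $F(u)$.
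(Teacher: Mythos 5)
Your low-frequency bookkeeping is in the spirit of the paper's proof (split the Duhamel integral at $t/2$, use $L^1\to L^\infty$ decay of the propagator against $\|F(u)\|_1\lesssim(1+\tau)^{-2}$ on $[0,t/2]$), but the argument has a load-bearing false premise: you assert that at high frequencies the propagators ``behave like the heat semigroup,'' with each spatial derivative of $S_1(t)$ costing $t^{-1/2}$ for small $t$, and you then invoke $L^{3/2}\to L^\infty$ (and general $L^q\to L^p$) smoothing bounds with integrable singularities. For $\partial_t^2-\beta^2\Delta-\nu\Delta\partial_t$ this is not the case. The characteristic root $\sigma_+^{(\beta)}=\tfrac12\bigl(-\nu|\xi|^2+\sqrt{\nu^2|\xi|^4-4\beta^2|\xi|^2}\bigr)$ tends to the constant $-\beta^2/\nu$ as $|\xi|\to\infty$, so the branch $e^{\sigma_+^{(\beta)}t}/(\sigma_+^{(\beta)}-\sigma_-^{(\beta)})$ of $\mathcal{K}_1^{(\beta)}$ decays only like $e^{-ct}|\xi|^{-2}$: the high-frequency part gains exactly two derivatives (from the $|\xi|^{-2}$ prefactor) and nothing more, and it has no $L^q\to L^p$ smoothing of heat type for $q<p$. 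This is precisely why quasilinear derivative loss is a genuine threat here and why the paper's Remark after Theorem \ref{thm:1.3} stresses that \eqref{eq:1.3} does not follow from parabolic smoothing alone. Your claimed $L^{3/2}\to L^\infty$ short-time bound on $[t/2,t]$ therefore does not exist, and the same defect undermines the derivation of the singular weights in \eqref{eq:1.4}--\eqref{eq:1.5}.

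What the paper actually does for the high-frequency part is structurally different: it pays a derivative on the nonlinearity rather than gaining integrability from the kernel. Concretely, \eqref{eq:6.9} controls the high-frequency Duhamel term by $\int_0^t e^{-c(t-\tau)}\|\nabla F(u)(\tau)\|_2\,d\tau$ using the $L^2\to L^\infty$ estimate \eqref{eq:4.11} (H\"older plus $|\xi|^{-2}\in L^2(|\xi|\ge1)$ in $\R^3$), which is affordable only because $u\in\dot H^3$ makes $\|\nabla F(u)\|_2\lesssim(1+\tau)^{-13/4}$ available; and for \eqref{eq:1.4}--\eqref{eq:1.6} it relies on the $L^p$--$L^p$ high-frequency estimates of Proposition \ref{prop:4.1}, which are nontrivial oscillatory-integral/multiplier bounds (proved in a companion paper), not consequences of parabolic smoothing. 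The restriction $p<6$ arises from the integrability of $(t-\tau)^{-\frac32(\frac12-\frac1p)-\frac12}$ in these high-frequency terms. Finally, your route to \eqref{eq:1.6} via $\partial_t^2u=\mu\Delta u+(\lambda+\mu)\nabla\,\mathrm{div}\,u+\nu\Delta\partial_tu+F(u)$ needs $\|\nabla^2u(t)\|_{L^p}\lesssim(1+t)^{-\frac52(1-\frac1p)}$ for $2\le p<6$, which does not follow by interpolating the $L^2$ bounds of Theorem \ref{thm:1.1} (interpolation gives only $(1+t)^{-2+\frac{3}{2p}}$, which is weaker for $p>2$) and would itself require the same delicate $L^p$ machinery; the paper instead differentiates the Duhamel formula twice, using $\partial_t\mathcal{K}_1^{(\beta)}(0,\xi)=1$ to isolate the term $F(u)(t)$. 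To repair your proof you would need to replace the heat-semigroup heuristic at high frequencies with the two-derivative gain of Lemma \ref{Lem:4.2} and Proposition \ref{prop:4.1} and redo the exponent bookkeeping accordingly.
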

\begin{rem}
	Theorems \ref{thm:1.1} and \ref{thm:1.3} improves the results of \cite{Ponce} in the following sense.
	The estimates \eqref{eq:1.2} and \eqref{eq:1.3} yield
	\begin{equation}  \label{eq:1.999}
\| \nabla u(t) \|_{L^{q}(\R^{3})} \le C(1+t)^{-\frac{3}{2}(1-\frac{1}{q})+\frac{1}{q}-\frac{1}{2}}, \quad  2 \le q \le \infty
\end{equation}
	by the interpolation, which shows the faster decay properties than the estimate \eqref{eq:1.333}.
	We also note that the estimate \eqref{eq:1.3} is not obtained by the Sobolev embedding $ \| g \|_{L^{\infty}(\R^{3})} \le C \| g \|_{L^{2}(\R^{3})}^{\frac{1}{4}} \| \nabla^{2} g \|_{L^{2}(\R^{3})}^{\frac{3}{4}}$ and suggests the hyperbolic aspect of \eqref{eq:1.1}.
	The proof is based on the estimates of the fundamental solutions to \eqref{eq:1.1}, which will be presented in later.
Moreover our assumption for the regularity of initial data is slightly weaker than \cite{Ponce}. 
And, as is seen in \eqref{eq:1.2}-\eqref{eq:1.6}, 
we sharply estimate all norms of the function spaces which $u$ belongs to.
This fact means that our assumption doesn't have extra regularity for the initial data.
We close this remark with an explanation of the hyperbolic aspect of \eqref{eq:1.1}. 
In this paper we consider the Cauchy problem \eqref{eq:1.1} with \eqref{eq:1.222} in the framework of $L^{2}$-Sobolev spaces; 
and we establish quantitative estimates for smoothing and decay properties which reflect both the damping aspect (i.e., the parabolic aspect) and the dispersive aspect (i.e., the hyperbolic aspect) of the system \eqref{eq:1.1}. 
Indeed,
for example, 
we observe the hyperbolic aspect of the system \eqref{eq:1.1} in the diffusion waves $G_{0}^{(\beta)}(t,x)$ and $G_{1}^{(\beta)}(t,x)$,
which appear in the asymptotic profiles of the solutions to \eqref{eq:1.1}. 
Here $G_{j}^{(\beta)}(t,x)$ for $j=0,1$ are defined by
\begin{equation} \label{eq:1.7}
\begin{split}
G_{j}^{(\beta)}(t,x)
:=\mathcal{F}^{-1}[
	\mathcal{G}^{(\beta)}_{j}(t,\xi)
	]
\end{split}
\end{equation}
and 
\begin{equation*} 
\begin{split}
\mathcal{G}^{(\beta)}_{0}(t,\xi) := e^{-\frac{\nu |\xi|^{2}}{2}t} \cos (\beta |\xi| t), \quad
\mathcal{G}^{(\beta)}_{1}(t,\xi) := e^{-\frac{\nu |\xi|^{2}}{2}t} \frac{\sin (\beta |\xi| t)}{ \beta |\xi| }
\end{split}
\end{equation*}
with the parameter $\beta>0$.
Hoff and Zumbrun \cite{H-Z1} considered the compressible Navier-Stokes equations whose linearized semigroup at the motionless state takes a similar form to the fundamental solution for the linearized equation for \eqref{eq:1.1}; 
and it was shown in \cite{H-Z1} that a difference between the diffusion waves $G_{j}^{(\beta)}(t,x)$ and the heat kernel 
(i.e., $G_{j}^{(\beta)}(t,x)$ with $\beta=0$) appears in quantitative estimates in $L^{p}$-norms for $p \neq 2$. 
See also \cite{H-Z1, H-Z2, K-S, Shibata}.
We apply the observations on the diffusion waves in \cite{H-Z1, H-Z2, K-S, Shibata} and establish our improvement of the decay property \eqref{eq:1.999} for the quasi-linear hyperbolic system \eqref{eq:1.1}. 
	%
\end{rem}
\begin{rem}
We obtain the estimates \eqref{eq:1.3}-\eqref{eq:1.6}, which indicates the smoothing effect of the global solution 
in the sense of integrability.
We emphasize that this regularity gain isn't easily expected since our nonlinear interaction is given by quasi-linear,  
even if the linear principal part includes the viscoelastic term $-\nu \Delta \partial_{t} u$.
Indeed, as is well-known, hyperbolic problem with quasi-linear nonlinear term may cause ``derivative-loss'' to the solution.

For the proof of \eqref{eq:1.3}-\eqref{eq:1.6}, 
the basic idea is to use both parabolic aspect and hyperbolic aspect of the fundamental solutions to \eqref{eq:1.1}. 
More precisely,
two types of the estimates for the high frequency parts of the fundamental solutions to \eqref{eq:1.1}, 
$L^{p}$-$L^{p}$ type estimates (Proposition \ref{prop:4.1}) and $L^{\infty}$-$L^{2}$ type estimates (Lemma \ref{Lem:4.2}), 
play essential role.
$L^{p}$-$L^{p}$ type estimates are shown by the combination of the parabolic smoothing  (i.e. parabolic aspect) 
and use of the cancellation in the integration by parts by the oscillation integral (i.e. hyperbolic aspect), 
while $L^{\infty}$-$L^{2}$ type estimates come from the parabolic smoothing  (i.e. parabolic aspect) and the H\"{o}lder inequality 
with the fact $|\xi|^{-2} \in L^{2}(|\xi| \ge 1)$ in $\R^{3}$. 
We also note that the regularity of the solution guaranteed in Theorem \ref{thm:1.3} is essential to obtain Theorem \ref{thm:1.5}.

As for the smoothing effect, 
we mention an interesting work by Ghisi, Gobbino and Haraux \cite{GGH},
where abstract linear hyperbolic equations with strong damping are considered in a Hilbert space setting and the smoothing effect of solutions are described by the fractional power of a nonnegative self-adjoint operator. 

\end{rem}
We next claim the approximation formulas of the global solutions $u$, obtained in Theorems \ref{thm:1.1}, 
in the topology observed in Theorem \ref{thm:1.3}.
For this purpose, we denote $\mathcal{I}_{3} \in M(\R;3)$ is the identity matrix and 
\begin{equation} \label{eq:1.8}
\mathcal{P}:=\displaystyle\frac{\xi}{|\xi|} \otimes \frac{\xi}{|\xi|}.
\end{equation}
We also define the 3-d valued constant vectors depending on the initial data and the nonlinear term as follows. 
\begin{equation*}
\begin{split}
m_{j}= {}^t\! (m_{j1}, m_{j2}, m_{j3}), \quad M[u]:={}^t\! (M_{1}[u], M_{2}[u], M_{3}[u]), 
\end{split}
\end{equation*}
where 
\begin{equation*}
\begin{split}
m_{0k}:= \displaystyle\int_{\R^{3}} \nabla f_{0k}(x) dx, \quad m_{1k}:= \displaystyle\int_{\R^{3}} f_{1k}(x) dx
\end{split}
\end{equation*}
and 
\begin{equation*}
\begin{split}
M_{k}[u] := \displaystyle\int_{0}^{\infty} \int_{\R^{3}} F_{k}(u)(\tau, y) dy\, d \tau 
\end{split}
\end{equation*}
for $k=1,2,3$.
Using the above notation, we define the functions $G$, $H$ and $\tilde{G}$ by
\begin{equation} \label{eq:1.9}
\begin{split}
 G(t,x):= 
& \nabla^{-1} \mathcal{F}^{-1} \left[ 
\left( \mathcal{G}^{(\sqrt{\lambda+2 \mu})}_{0}(t,\xi) -\mathcal{G}^{(\sqrt{\mu})}_{0}(t,\xi) \right)\mathcal{P} 
+
\mathcal{G}^{(\sqrt{\mu})}_{0}(t,\xi) \mathcal{I}_{3}
\right] m_{0} \\
& +
\mathcal{F}^{-1} \left[ 
\left( \mathcal{G}^{(\sqrt{\lambda+2 \mu})}_{1}(t,\xi) -\mathcal{G}^{(\sqrt{\mu})}_{1}(t,\xi) \right)\mathcal{P} 
+
\mathcal{G}^{(\sqrt{\mu})}_{1}(t,\xi) \mathcal{I}_{3}
\right] (m_{1}+M[u]),
\end{split}
\end{equation}
\begin{equation} \label{eq:1.10}
\begin{split}
& H(t,x):= \\ 
& \nabla^{-1} \mathcal{F}^{-1} \left[ 
\left( (\lambda+2 \mu)  
\mathcal{G}^{(\sqrt{\lambda+2 \mu})}_{1}(t,\xi) 
-\mu \mathcal{G}^{(\sqrt{\mu} )}_{1}(t,\xi) 
\right)\mathcal{P} 
+ \mu \mathcal{G}^{(\sqrt{\mu} )}_{1}(t,\xi) \mathcal{I}_{3}
\right] m_{0} \\
& +
\mathcal{F}^{-1} \left[ 
\left(  \mathcal{G}^{(\sqrt{\lambda+2 \mu})}_{0}(t,\xi) -\mathcal{G}^{(\sqrt{\mu} )}_{0}(t,\xi) \right) \mathcal{P} 
+
\mathcal{G}^{(\sqrt{\mu} )}_{0}(t,\xi) \mathcal{I}_{3}
\right]  (m_{1}+M[u])
\end{split}
\end{equation}
and
\begin{equation} \label{eq:1.11}
	\begin{split}
		& \tilde{G}(t,x) := \\
		& -\Delta \nabla^{-1}
		\mathcal{F}^{-1} \left[
		\left(
		(\lambda+2 \mu) 
		\mathcal{G}^{(\sqrt{\lambda+2 \mu})}_{0}(t,\xi) 
		- \mu  \mathcal{G}^{(\sqrt{\mu} )}_{0}(t,\xi) 
		\right)\mathcal{P} 
		+ \mu  
		\mathcal{G}^{(\sqrt{\mu} )}_{0}(t,\xi) \mathcal{I}_{3}
		\right]  m_{0} \\
		& -\Delta
		\mathcal{F}^{-1} \biggl[
		\left(
		(\lambda+2 \mu) 
		\mathcal{G}^{(\sqrt{\lambda+2 \mu})}_{1}(t,\xi) 
		- \mu  \mathcal{G}^{(\sqrt{\mu} )}_{1}(t,\xi) 
		\right)\mathcal{P} \\
		&		
		+ \mu  
		\mathcal{G}^{(\sqrt{\mu} )}_{1}(t,\xi) \mathcal{I}_{3}
		\biggr] (m_{1}+M[u]),
	\end{split}
\end{equation}
respectively,
where $\mathcal{F}^{-1}$ represents the Fourier inverse transform. 
Here we formulate the asymptotic behavior of the solution of \eqref{eq:1.1} as $t \to \infty$.

\begin{thm} \label{thm:1.5}
The global solution $u(t)$ of \eqref{eq:1.1} constructed in Theorem \ref{thm:1.1} satisfies
\begin{align}
	& \| \nabla^{\alpha} (u(t)-G(t)) \|_{L^{2}(\R^{3})} = o(t^{-\frac{1}{4}-\frac{\alpha}{2}}), \quad 1 \le \alpha \le 3, \label{eq:1.12} \\
	& \| \nabla^{\alpha} (u(t)-G(t)) \|_{L^{\infty}(\R^{3})} =o(t^{-\frac{3}{2}-\frac{\alpha}{2}}), \quad 0 \le \alpha \le 1, \label{eq:1.13}  \\
	& \| \nabla^{\alpha} (\partial_{t}u(t) -H(t)) \|_{L^{2}(\R^{3})} =o(t^{-\frac{3}{4}-\frac{\alpha}{2}}), \quad 0 \le \alpha \le 2, \label{eq:1.14}  \\
& \| \nabla^{2} (\partial_{t} u(t) -H(t)) \|_{L^{p}(\R^{3})} =o( t^{-\frac{5}{2}(1-\frac{1}{p})-\frac{1}{2}} ), \quad 2 \le p <6, \label{eq:1.15}  \\
& \| \nabla^{\alpha} (\partial_{t} u(t) -H(t))\|_{L^{\infty}(\R^{3})}
=o( t^{-2-\frac{\alpha}{2}}),\quad 0 \le \alpha \le 1, \label{eq:1.16}  \\
& \| \partial_{t}^{2} u(t) -\tilde{G}(t) \|_{L^{p}(\R^{3})}  =o( t^{-\frac{5}{2}(1-\frac{1}{p})} ), \quad 2 \le p <6 \label{eq:1.17} 
\end{align}
as $t \to \infty$.
\end{thm}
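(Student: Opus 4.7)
The plan is a Duhamel expansion combined with a moment approximation at $\xi=0$ in frequency space. Writing
\begin{equation*}
u(t)=E_0(t)f_0+E_1(t)f_1+\int_0^t E_1(t-\tau)F(u)(\tau)\,d\tau,
\end{equation*}
where $E_0(t),E_1(t)$ are the fundamental solutions of the linearized system, the spectral analysis carried out earlier in the paper shows that the matrix symbols $\wh{E_0}(t,\xi),\wh{E_1}(t,\xi)$ decompose on $|\xi|\le 1$ into combinations of $\mathcal{G}_j^{(\beta_\pm(\xi))}$ ($j=0,1$) modulated by $\mathcal{P}$ and $\mathcal{I}_3-\mathcal{P}$, with $\beta_-(\xi)\to\sqrt{\mu}$, $\beta_+(\xi)\to\sqrt{\lam+2\mu}$ as $\xi\to0$. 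The profiles \eqref{eq:1.9}--\eqref{eq:1.11} are precisely this low-frequency principal part evaluated against Dirac masses weighted by $m_0$ and $m_1+M[u]$; the estimates \eqref{eq:1.12}--\eqref{eq:1.17} therefore reduce to bounding the discrepancy between the true solution and this leading surrogate in each of the six target norms.

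Next I split each $E_j(t)=E_j^L(t)+E_j^H(t)$ via a smooth Fourier cut-off at $|\xi|\sim 1$. The high-frequency parts decay super-algebraically by Proposition \ref{prop:4.1} and Lemma \ref{Lem:4.2}, so composed with data in $\dot H^3$ or with $F(u)$ they yield $o(\cdot)$ contributions in every target norm. For the low-frequency linear part, the identity
\begin{equation*}
\wh{f_j}(\xi)-\wh{f_j}(0)=\int_{\R^3}(e^{-ix\cdot\xi}-1)f_j(x)\,dx,
\end{equation*}
paired with a density approximation of $(f_0,f_1)$ in $\dot H^3\cap\dot W^{1,1}$ (resp.\ $H^1\cap L^1$) by Schwartz data, upgrades the naturally expected $O(\cdot)$ decay to $o(\cdot)$: the error splits into a ``tail'' piece controlled by the small approximation norm, and a ``smooth'' piece where the Taylor remainder $|\xi|$ supplies an extra power beyond the profile's rate. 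Because $m_0$ enters through a $\nabla^{-1}$ in \eqref{eq:1.9}--\eqref{eq:1.11}, the relevant zero-moment here is that of $\nabla f_0$, which yields exactly $m_0$.

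For the nonlinear Duhamel term I first observe that Theorem \ref{thm:1.1} gives
\begin{equation*}
\|F(u)(\tau)\|_{L^1}\le \|\nabla u(\tau)\|_{L^2}\|\nabla^2 u(\tau)\|_{L^2}\le C(1+\tau)^{-2},
\end{equation*}
so $F(u)\in L^1((0,\infty);L^1(\R^3))$ and $M[u]$ is well-defined. I then split $\int_0^t=\int_0^{t/2}+\int_{t/2}^t$. On $[t/2,t]$ the smoothing estimates of Theorem \ref{thm:1.3} combined with the propagator bounds directly deliver the required $o(\cdot)$ rate, with the tail $\int_{t/2}^\infty(1+\tau)^{-2}\,d\tau=O(t^{-1})$ providing the sharpening. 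On $[0,t/2]$ the discrepancy $E_1(t-\tau)F(u)(\tau)-E_1(t)\delta_0\cdot\int F(u)(\tau,y)\,dy$ is controlled by three remainders: the Taylor remainder of $E_1(t-\cdot)$ about $\tau=0$ (extra factor $\tau$), the zero-frequency remainder $\wh{F(u)}(\tau,\xi)-\wh{F(u)}(\tau,0)$ (extra factor $|\xi|$), and the truncation $\int_0^{t/2}\to\int_0^\infty$ (tail $\int_{t/2}^\infty\|F(u)\|_{L^1}\,d\tau$). Each is paired with the appropriate symbol bound so that the product lands inside the target $o(\cdot)$ exponent.

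Finally, \eqref{eq:1.14}--\eqref{eq:1.17} follow the same template after differentiating Duhamel in $t$, using $E_1(0)=0$ and $\pt_t E_1(0)=\mathcal{I}_3$; the boundary term $F(u)(t)$ appearing in $\pt_t^2 u$ has the correct scaling to be merged with $\tilde G(t)$ up to an $o(\cdot)$ error (indeed, the $|\xi|^2$ prefactor in $-\Delta$ matches the leading low-frequency behavior of $\pt_t^2\mathcal{G}_j^{(\beta)}(t,\xi)$). The main obstacle is the uniform matching of all remainder estimates to the six sharp exponents \eqref{eq:1.12}--\eqref{eq:1.17} simultaneously: each of $L^2$, $L^p$ for $p\in[2,6)$, and $L^\infty$ comes with a different decay dictated by the $L^p$-$L^p$ and $L^\infty$-$L^2$ bounds of the fundamental solution, and the combined Fourier-tail, Taylor, and truncation losses must land exactly on the prescribed $o(\cdot)$ rate in each case---in particular at the $L^\infty$ endpoint, where only the hyperbolic $L^\infty$-$L^2$ estimate of Lemma \ref{Lem:4.2} is available and the density argument has to be carried out in the $L^2$-framework through the Sobolev gap rather than directly in $L^\infty$.
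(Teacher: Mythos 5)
Your proposal is correct and follows essentially the same route as the paper: split $u=u_{lin}+u_{N}$ and the profiles accordingly, isolate the low-frequency principal part of the propagators (the content of Proposition \ref{Prop:3.1}), discard middle/high frequencies via Proposition \ref{prop:4.1}, Lemmas \ref{Lem:4.2} and \ref{Lem:2.4}, split Duhamel at $t/2$, and extract the $o(\cdot)$ from the three remainders (Taylor in $t$, spatial/frequency moment, truncation tail), exactly as in Propositions \ref{Prop:3.6} and \ref{prop:7.5}. The only variation is cosmetic: you obtain the little-$o$ in the moment approximation by a Fourier-side Taylor remainder plus density of Schwartz data, whereas the paper works in physical space, splitting the convolution over $|y|\le t^{1/4}$ and $|y|\ge t^{1/4}$ and using the mean value theorem together with $\lim_{t\to\infty}\int_{|y|\ge t^{1/4}}|g|\,dy=0$; both devices are standard and interchangeable here.
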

\begin{rem}
We can derive the corresponding results for the nonlinear term $F(u)= \nabla u \nabla \partial_{t} u$.
More precisely, for $(f_{0}, f_{1}) \in \{ \dot{H}^{3} \cap \dot{W}^{1,1} \}^{3} \times  \{ H^{2} \cap {L}^{1} \}^{3}$ with sufficiently small norms,
we have the existence of global solutions with decay properties, and smoothing effect of global solutions and their asymptotic profiles as $t \to \infty$. 
It is worth pointing out that in this case, we conclude that 
\begin{equation*}
		\begin{split}
			u \in \{ C^{1}( (0,\infty); \dot{W}^{2,6}) 
			\cap W^{1, \infty}( 0,\infty; W^{1,\infty}) \cap C^{2} ((0,\infty);  L^{6} ) \}^{3},
		\end{split}
	\end{equation*}
which is the reason why we separate the results corresponding to the nonlinear terms $F(u)= \nabla u \nabla^{2} u$ and $F(u)= \nabla u \nabla \partial_{t} u$.
On the other hand, the proof is parallel and we only focus on the case $F(u)= \nabla u \nabla^{2} u$ in this paper.  
\end{rem}

This paper is organized as follows.
In section 2, we firstly review the well-known facts on fundamental solutions to the strongly damped linear wave equations and wave equations. 
We also introduce notation and useful estimates used throughout this paper.
Sections 3 and 4 are devoted to the derivation of time decay properties of the fundamental solutions to \eqref{eq:1.1}. 
Our main results, Theorems \ref{thm:1.1}, \ref{thm:1.3} and \ref{thm:1.5} are proved in sections 5-7. 
For the completeness of the proof, we will show the estimate \eqref{eq:3.36} as an appendix.
\section{Preliminaries}
\subsection{Notation}
We shall explain our basic notation.
Let $\hat{f}$ denote the Fourier transform of $f$
defined by
\begin{align*}
\hat{f}(\xi) := (2 \pi)^{-\frac{3}{2}}
\int_{\R^{3}} e^{-i x \cdot \xi} f(x) dx.
\end{align*}
Also, let $\mathcal{F}^{-1}[f]$ or $\check{f}$ denote the inverse
Fourier transform.

The norm of $f \in L^{p}(\R^{3})$ is denoted by $\| f \|_{p}$ for $1 \le p \le \infty$. 
For $k \ge 0$ and $1 \le p \le \infty$, let $W^{k,p}(\R^{3})$ be the usual Sobolev spaces
\begin{equation*}
W^{k,p}(\R^{3})
  :=\Big\{ f:\R^{3} \to \R;
        \| f \|_{W^{k,p}(\R^{3})} 
        := \| f \|_{p}+
         \|  \nabla_{x}^{k} f \|_{p}< \infty 
     \Big\}.
\end{equation*}
When $p=2$, we denote $W^{k,2}(\R^{3}) = H^{k}(\R^{3})$.
For the notation of the function spaces, the domain $\R^{3}$ is often abbreviated.

We will denote by $\dot{W}^{k,p}$ and $\dot{H}^{k}$
the corresponding homogeneous Sobolev spaces, respectively.  

\subsection{Estimates for the strongly damped linear wave equations}
In this subsection, we summarize the estimates for the fundamental solutions of the strong damped wave equation:
\begin{equation} \label{eq:2.1}
\left\{
\begin{split}
& \partial_{t}^{2} w -\beta^{2} \Delta w -\nu \Delta \partial_{t} w  = f(t,x), \quad t>0, \quad x \in \R^{3}, \\
& w(0,x)=w_{0}(x), \quad \partial_{t} w(0,x)=w_{1}(x) , \quad x \in \R^{3}, 
\end{split}
\right.
\end{equation}
where $w=w(t,x):(0,\infty)\times \R^{3}\to \R$ and $\beta>0$.
Now introducing the characteristic roots as
\begin{equation*}
\sigma_{\pm}^{(\beta)}:=\frac{-\nu|\xi|^{2} \pm \sqrt{\nu^{2}|\xi|^{4}-4 \beta^{2} |\xi|^{2}}}{2},
\end{equation*}
we define the functions by  
\begin{equation*}
\begin{split}
\mathcal{K}_{0}^{(\beta)}(t,\xi):= 
\frac{
	-\sigma_{-}^{(\beta)}e^{\sigma_{+}^{(\beta)}t}+\sigma_{+}^{(\beta)} e^{\sigma_{-}^{(\beta)}t}
		}{\sigma_{+}^{(\beta)}-\sigma_{-}^{(\beta)}},
\end{split}
\end{equation*}
\begin{equation*}
\begin{split}
	\mathcal{K}_{1}^{(\beta)}(t,\xi):= 
	\frac{
		e^{\sigma_{+}^{(\beta)}t}-e^{\sigma_{-}^{(\beta)}t}
	}{\sigma_{+}^{(\beta)}-\sigma_{-}^{(\beta)}}.
\end{split}
\end{equation*}
We also define the smooth cut-off functions as follows;
$\chi_{j}= \chi_{j}(\xi) \in C^{\infty}(\R^{3})$ $(j=L,M,H)$ satisfying 
\begin{equation*}
\begin{split}
\chi_{L}:= 
\begin{cases}
& 1 \quad (|\xi|\le \frac{c_{0}}{2}), \\
& 0 \quad (|\xi|\ge c_{0}),
\end{cases}
\end{split}
\end{equation*}
\begin{equation*}
\begin{split}
\chi_{H}:= 
\begin{cases}
& 0 \quad (|\xi|\le c_{1}), \\
& 1 \quad (|\xi|\ge 2 c_{1})
\end{cases}
\end{split}
\end{equation*}
and 
\begin{equation*}
\chi_{M}=1-\chi_{L}-\chi_{H}
\end{equation*}
for some $c_{0}>0$ with sufficiently small and $c_{1}>0$, which we choose later.
For simplicity of the notation, we use the evolution operators defined by 
\begin{equation} \label{eq:2.2}
\begin{split}
K_{j}^{(\beta)}(t)g & := \mathcal{F}^{-1}[
\mathcal{K}_{j}^{(\beta)}(t, \xi)\hat{g} 
], \\
K_{jk}^{(\beta)}(t)g & := \mathcal{F}^{-1}[
\mathcal{K}_{j}^{(\beta)}(t,\xi)\chi_{k} \hat{g} 
], \\
G_{jk}^{(\beta)}(t) \ast g & := \mathcal{F}^{-1}[
\mathcal{G}_{j}^{(\beta)}(t,\xi)\chi_{k} \hat{g} 
]
\end{split}
\end{equation}
for $j=0,1$ and $k=L,M,H$.

Then we immediately have the representation formula to the solution of \eqref{eq:2.1}.
\begin{lem} \label{Lem:2.1}
	The solution $w$ of single damped wave equation \eqref{eq:2.1} satisfies 
	\begin{equation} \label{eq:2.3}
	\hat{w}(t) = \mathcal{K}_{0}^{(\beta)}(t, \xi) \hat{w}_{0} +\mathcal{K}_{1}^{(\beta)}(t, \xi)\hat{w}_{1} 
	+ \int_{0}^{t} \mathcal{K}_{1}^{(\beta)}(t-\tau, \xi) \hat{f}(\tau) d \tau.
	\end{equation}
\end{lem}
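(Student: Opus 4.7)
The plan is to reduce the strongly damped wave equation to a second-order linear ODE in $t$ for each fixed frequency $\xi$ by applying the spatial Fourier transform, and then to read off the representation formula from the standard variation-of-parameters solution of that ODE.

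First I would apply $\mathcal{F}$ in $x$ to \eqref{eq:2.1}. Since the spatial operators $-\beta^2 \Delta$ and $-\nu \Delta \partial_t$ transform into multiplication by $\beta^2 |\xi|^2$ and $\nu |\xi|^2 \partial_t$ respectively, the equation becomes the family of inhomogeneous linear ODEs
\begin{equation*}
\partial_t^2 \hat{w}(t,\xi) + \nu |\xi|^2 \partial_t \hat{w}(t,\xi) + \beta^2 |\xi|^2 \hat{w}(t,\xi) = \hat{f}(t,\xi),
\end{equation*}
with $\hat{w}(0,\xi) = \hat{w}_0(\xi)$ and $\partial_t \hat{w}(0,\xi) = \hat{w}_1(\xi)$. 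The associated characteristic polynomial $\sigma^2 + \nu|\xi|^2 \sigma + \beta^2 |\xi|^2 = 0$ has exactly the two roots $\sigma_\pm^{(\beta)}$ defined in the excerpt.

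Next, for $\xi$ with $\sigma_+^{(\beta)} \neq \sigma_-^{(\beta)}$ (which occurs for all but finitely many frequency magnitudes), I would check directly from the definitions that
\begin{equation*}
\mathcal{K}_0^{(\beta)}(0,\xi) = 1, \quad \partial_t \mathcal{K}_0^{(\beta)}(0,\xi) = 0, \quad \mathcal{K}_1^{(\beta)}(0,\xi) = 0, \quad \partial_t \mathcal{K}_1^{(\beta)}(0,\xi) = 1,
\end{equation*}
by evaluating the explicit formulas; these identities are immediate since $-\sigma_- + \sigma_+ = \sigma_+ - \sigma_-$ and $-\sigma_-\sigma_+ + \sigma_+\sigma_- = 0$. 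Because each of $e^{\sigma_\pm^{(\beta)} t}$ solves the homogeneous ODE, $\mathcal{K}_0^{(\beta)}$ and $\mathcal{K}_1^{(\beta)}$ are also solutions, and by the above initial conditions they form the canonical fundamental pair. Thus the unique solution of the homogeneous Cauchy problem is $\mathcal{K}_0^{(\beta)}(t,\xi)\hat{w}_0(\xi) + \mathcal{K}_1^{(\beta)}(t,\xi)\hat{w}_1(\xi)$.

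Finally, I would invoke Duhamel's principle for the particular solution: since $\mathcal{K}_1^{(\beta)}(t,\xi)$ is precisely the solution of the homogeneous ODE with initial data $(0,1)$, the convolution in time $\int_0^t \mathcal{K}_1^{(\beta)}(t-\tau,\xi) \hat{f}(\tau,\xi)\, d\tau$ furnishes the particular solution with zero initial data. Adding the two contributions gives \eqref{eq:2.3}. The case where $\sigma_+^{(\beta)} = \sigma_-^{(\beta)}$ (i.e.\ $\nu |\xi|^2 = 2\beta$) can either be treated as a removable singularity by a limit argument (L'H\^opital) or bypassed by noting it occurs on a measure-zero set in $\xi$. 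There is no real obstacle here; the only care needed is the verification of the initial conditions for $\mathcal{K}_0^{(\beta)}$ and $\mathcal{K}_1^{(\beta)}$, which is a short algebraic check.
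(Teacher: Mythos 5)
Your proof is correct and is exactly the standard Fourier-transform/Duhamel argument that the paper treats as immediate (it states the lemma with no proof beyond ``we immediately have''). The only nitpick is that the degenerate root condition should read $\nu|\xi|=2\beta$ rather than $\nu|\xi|^{2}=2\beta$, which does not affect the argument.
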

The representation formulas of the low frequency parts of the fundamental solution is important to derive the asymptotic profiles of the solutions for \eqref{eq:1.1}.
\begin{lem}[\cite{Ponce}, \cite{Shibata}, \cite{D-R}, \cite{I-T}] \label{Lem:2.3}
	The fundamental solutions of \eqref{eq:2.1}, $\mathcal{K}_{0}^{(\beta)}(t,\xi)$ and
	$\mathcal{K}_{1}^{(\beta)}(t,\xi)$, 
	satisfy the following representation formula in ${\rm supp}\, \chi_{L}$: 
	\begin{equation} \label{eq:2.4}
	\begin{split}
	& \mathcal{K}_{0}^{(\beta)}(t,\xi)\chi_{L} = \nu |\xi|^{2} \mathcal{K}_{1}^{(\beta)}(t,\xi)\chi_{L}  
	+\mathcal{K}_{00}^{(\beta)}(t,\xi)\chi_{L}, 
	\end{split}
	\end{equation}
	\begin{equation} \label{eq:2.5}
		\begin{split} 
			& \mathcal{K}_{1}^{(\beta)}(t,\xi)\chi_{L} =
			\frac{
				e^{-\frac{\nu|\xi|^{2} t}{2}} \sin (t \beta |\xi| \phi_{\nu,\beta})
			}{\beta |\xi| \phi_{\nu, \beta}} \chi_{L},
		\end{split}
	\end{equation}
	where
	$
	\phi_{\nu,\beta}=\phi_{\nu,\beta}(\xi):= \sqrt{1-\frac{\nu^{2} |\xi|^{2}}{4 \beta^{2}}}
	$ and 
	\begin{equation} \label{eq:2.6}
		\begin{split}
			& \mathcal{K}_{00}^{(\beta)}(t,\xi):=e^{-\frac{\nu|\xi|^{2} t}{2}} \cos (t \beta |\xi| \phi_{\nu,\beta}). 
		\end{split}
	\end{equation}
\end{lem}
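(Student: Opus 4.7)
The plan is to prove both identities by direct substitution of the complex form of the characteristic roots, exploiting the fact that on $\supp \chi_L$ the roots are genuinely complex conjugate. First I would fix the threshold $c_0 > 0$ in the definition of $\chi_L$ small enough (for instance $c_0 < 2\beta/\nu$) so that throughout $\supp \chi_L$ the discriminant $\nu^2 |\xi|^4 - 4\beta^2 |\xi|^2$ is non-positive. Consequently $\sigma_\pm^{(\beta)}$ are complex conjugates and can be rewritten as
$$\sigma_\pm^{(\beta)} = -\frac{\nu|\xi|^2}{2} \pm i\beta|\xi|\phi_{\nu,\beta},$$
so that $\sigma_+^{(\beta)} - \sigma_-^{(\beta)} = 2i\beta|\xi|\phi_{\nu,\beta}$, with $\phi_{\nu,\beta}$ bounded between fixed positive constants on $\supp\chi_L \setminus \{0\}$.

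For \eqref{eq:2.5} I would substitute these expressions into the definition of $\mathcal{K}_1^{(\beta)}$; the numerator collapses to $2i\, e^{-\nu|\xi|^2 t/2} \sin(\beta|\xi|\phi_{\nu,\beta} t)$ by Euler's formula, and dividing by $2i\beta|\xi|\phi_{\nu,\beta}$ gives the claim after multiplication by $\chi_L$. For \eqref{eq:2.4} I would apply the same expansion to the numerator $-\sigma_-^{(\beta)} e^{\sigma_+^{(\beta)} t} + \sigma_+^{(\beta)} e^{\sigma_-^{(\beta)} t}$ of $\mathcal{K}_0^{(\beta)}$; writing $-\sigma_-^{(\beta)} = \nu|\xi|^2/2 + i\beta|\xi|\phi_{\nu,\beta}$ and $\sigma_+^{(\beta)} = -\nu|\xi|^2/2 + i\beta|\xi|\phi_{\nu,\beta}$ and grouping real and imaginary parts, one obtains a cosine contribution that, after division by $2i\beta|\xi|\phi_{\nu,\beta}$, recovers $\mathcal{K}_{00}^{(\beta)}$, plus a sine contribution that is a scalar multiple of $\mathcal{K}_1^{(\beta)}$ with coefficient proportional to $\nu|\xi|^2$. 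Multiplying by $\chi_L$ yields \eqref{eq:2.4}.

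There is no genuine obstacle here: everything reduces to elementary trigonometric identities applied to the two complex conjugate roots. The only point that requires any care is the choice of $c_0$, which must be taken small enough to guarantee that the discriminant stays non-positive and that $\phi_{\nu,\beta}$ remains bounded away from zero; precisely this smallness underlies the decomposition of phase space into the low-frequency region treated here and the intermediate/high-frequency regions which must be handled by separate representations of the fundamental solutions later in the paper.
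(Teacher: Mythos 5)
Your approach is the right one, and it is essentially the only one: the paper gives no proof of this lemma (it simply cites \cite{Ponce}, \cite{Shibata}, \cite{D-R}, \cite{I-T}), and the cited sources verify it exactly as you propose, by writing the characteristic roots in the form $\sigma_{\pm}^{(\beta)}=-\tfrac{\nu|\xi|^{2}}{2}\pm i\beta|\xi|\phi_{\nu,\beta}$ on $\supp\chi_{L}$ (with $c_{0}<2\beta/\nu$ so that the discriminant is negative and $\phi_{\nu,\beta}$ is bounded away from $0$) and applying Euler's formula. Your derivation of \eqref{eq:2.5} is complete and correct.

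One point deserves care in your treatment of \eqref{eq:2.4}, where you only say the sine contribution has ``coefficient proportional to $\nu|\xi|^{2}$.'' If you carry the computation out, the numerator of $\mathcal{K}_{0}^{(\beta)}$ is
\begin{equation*}
\Bigl(\tfrac{\nu|\xi|^{2}}{2}+i\beta|\xi|\phi_{\nu,\beta}\Bigr)e^{\sigma_{+}^{(\beta)}t}
+\Bigl(-\tfrac{\nu|\xi|^{2}}{2}+i\beta|\xi|\phi_{\nu,\beta}\Bigr)e^{\sigma_{-}^{(\beta)}t}
=\tfrac{\nu|\xi|^{2}}{2}\bigl(e^{\sigma_{+}^{(\beta)}t}-e^{\sigma_{-}^{(\beta)}t}\bigr)
+i\beta|\xi|\phi_{\nu,\beta}\bigl(e^{\sigma_{+}^{(\beta)}t}+e^{\sigma_{-}^{(\beta)}t}\bigr),
\end{equation*}
so that dividing by $\sigma_{+}^{(\beta)}-\sigma_{-}^{(\beta)}=2i\beta|\xi|\phi_{\nu,\beta}$ yields
$\mathcal{K}_{0}^{(\beta)}=\tfrac{\nu|\xi|^{2}}{2}\,\mathcal{K}_{1}^{(\beta)}+\mathcal{K}_{00}^{(\beta)}$,
i.e.\ the coefficient is $\tfrac{\nu}{2}|\xi|^{2}$ and not $\nu|\xi|^{2}$ as printed in \eqref{eq:2.4}. (A quick consistency check: with the printed coefficient one would get $\partial_{t}\mathcal{K}_{0}^{(\beta)}(0,\xi)=\tfrac{\nu|\xi|^{2}}{2}\neq 0$, contradicting the initial conditions encoded in the definition of $\mathcal{K}_{0}^{(\beta)}$.) You should therefore either state the constant explicitly as $\tfrac{\nu}{2}|\xi|^{2}$ or flag the factor $\tfrac12$ discrepancy with the statement; the difference is immaterial for the way the lemma is used later, since that term is only ever exploited as an $O(|\xi|^{2})$ correction to $\mathcal{K}_{00}^{(\beta)}$, but a proof of the lemma as literally stated would not go through.
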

The decay properties of the fundamental solutions \eqref{eq:2.1} are described as follows:
\begin{lem}[\cite{Ponce}, \cite{Shibata}, \cite{K-S}] \label{Lem:2.2}
	Let $1 \le q \le p \le \infty$, $\ell \ge \tilde{\ell} \ge 0$ and $\alpha \ge \tilde{\alpha} \ge 0$. 
	Then it holds that
	\begin{align} 
	& 
	\left\|
	\partial_{t}^{\ell} 
	\nabla^{\alpha}
	K_{0L}^{(\beta)}(t)g
	\right\|_{p} 
	\le C(1+t)^{-\frac{3}{2}(\frac{1}{q}-\frac{1}{p})-(\frac{1}{q}-\frac{1}{p})+\frac{1}{2} -\frac{\ell-\tilde{\ell}+ \alpha-\tilde{\alpha}}{2}}
	\| \nabla^{\tilde{\alpha}+\tilde{\ell}} g \|_{q}, \label{eq:2.7}  \\
	& \left\| 
	\partial_{t}^{\ell} 
	\nabla^{\alpha}
	K_{1L}^{(\beta)}(t) g
	\right\|_{p} 
	\le C(1+t)^{-\frac{3}{2}(\frac{1}{q}-\frac{1}{p})-(\frac{1}{q}-\frac{1}{p})+1 -\frac{\ell-\tilde{\ell}+ \alpha-\tilde{\alpha}}{2}}
	\| \nabla^{\tilde{\alpha}+\tilde{\ell}} g \|_{q}, \label{eq:2.8} \\ 
		& 
		\left\|
		\partial_{t}^{\ell} 
		\nabla^{\alpha}
		G_{0L}^{(\beta)}(t) \ast g
		\right\|_{p} 
		\le C(1+t)^{-\frac{3}{2}(\frac{1}{q}-\frac{1}{p})-(\frac{1}{q}-\frac{1}{p})+\frac{1}{2} -\frac{\ell-\tilde{\ell}+ \alpha-\tilde{\alpha}}{2}}
		\| \nabla^{\tilde{\alpha}+\tilde{\ell}} g \|_{q}, \label{eq:2.9}  \\
		& \left\| 
		\partial_{t}^{\ell} 
		\nabla^{\alpha}
		G_{1L}^{(\beta)}(t) \ast g
		\right\|_{p} 
		\le C(1+t)^{-\frac{3}{2}(\frac{1}{q}-\frac{1}{p})-(\frac{1}{q}-\frac{1}{p})+1 -\frac{\ell-\tilde{\ell}+ \alpha-\tilde{\alpha}}{2}}
		\| \nabla^{\tilde{\alpha}+\tilde{\ell}} g \|_{q} \label{eq:2.10} 
	\end{align}
	for $t \ge 0$.
\end{lem}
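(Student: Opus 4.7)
The plan is to use Lemma \ref{Lem:2.3} to write $\mathcal{K}_{1}^{(\beta)}\chi_L$ and $\mathcal{K}_{00}^{(\beta)}\chi_L$ in the form $e^{-\nu|\xi|^2 t/2}\frac{\sin(t\beta|\xi|\phi_{\nu,\beta})}{\beta|\xi|\phi_{\nu,\beta}}\chi_L$ and $e^{-\nu|\xi|^2 t/2}\cos(t\beta|\xi|\phi_{\nu,\beta})\chi_L$, and to observe that (once $c_0$ is chosen so that $\nu c_0<2\beta$) $\phi_{\nu,\beta}(\xi)=1+O(|\xi|^2)$ is smooth and bounded away from $0$ on $\supp\chi_L$. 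Consequently the $\mathcal{K}_j^{(\beta)}\chi_L$ have the same $L^q$--$L^p$ mapping properties as the $\mathcal{G}_j^{(\beta)}\chi_L$, and by \eqref{eq:2.4} the $|\xi|^2\mathcal{K}_1^{(\beta)}$ piece of $\mathcal{K}_0^{(\beta)}\chi_L$ is more regular than $\mathcal{K}_{00}^{(\beta)}\chi_L$, so the $K_{0L}^{(\beta)}$ rate is dictated by $\mathcal{K}_{00}^{(\beta)}\chi_L$, i.e.\ by the $\mathcal{G}_0^{(\beta)}$ rate. This reduces \eqref{eq:2.7}--\eqref{eq:2.8} to \eqref{eq:2.9}--\eqref{eq:2.10}.

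To prove \eqref{eq:2.9}--\eqref{eq:2.10} I would estimate the convolution kernels $\|G_{jL}^{(\beta)}(t)\|_r$ in $L^r$ and then invoke Young's inequality $\|G_{jL}^{(\beta)}(t)\ast g\|_p\le\|G_{jL}^{(\beta)}(t)\|_r\|g\|_q$ with $1/r=1+1/p-1/q$. At the Plancherel-friendly endpoint $r=2$, the pointwise symbol bounds $|\mathcal{G}_0^{(\beta)}\chi_L|\le Ce^{-c\nu|\xi|^2 t}$, $|\mathcal{G}_1^{(\beta)}\chi_L|\le C\min(t,|\xi|^{-1})e^{-c\nu|\xi|^2 t}$, together with $|\xi|^k e^{-c\nu|\xi|^2 t}\le C(1+t)^{-k/2}$, deliver the desired rate. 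The sharp bound at $r=1$ and $r=\infty$ comes from the physical-space identity $G_1^{(\beta)}(t,\cdot)=p_t^\nu\ast\bigl(\tfrac{1}{4\pi\beta t}\sigma_{|x|=\beta t}\bigr)$, where $p_t^\nu$ is the Gaussian of variance $\nu t$ and $\sigma_{|x|=\beta t}$ is surface measure on the sphere of radius $\beta t$; this shows that $G_{1L}^{(\beta)}(t,\cdot)$ is essentially an annular Gaussian of amplitude $\sim t^{-3/2}$ supported in $\{||x|-\beta t|\le C\sqrt{t}\}$, of volume $\sim t^{5/2}$, giving $\|G_{1L}^{(\beta)}(t)\|_r\le C t^{1-\frac{5}{2}(1-1/r)}$. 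The identity $\mathcal{G}_0^{(\beta)}=\partial_t\mathcal{G}_1^{(\beta)}+\tfrac{\nu|\xi|^2}{2}\mathcal{G}_1^{(\beta)}$ then produces amplitude $\sim t^{-2}$ for $G_{0L}^{(\beta)}$ and $\|G_{0L}^{(\beta)}(t)\|_r\le Ct^{\frac{1}{2}-\frac{5}{2}(1-1/r)}$. The derivative structure is handled symbol-theoretically: the $|\xi|^{\tilde\alpha+\tilde\ell}$ factor is transferred onto $g$ as $\|\nabla^{\tilde\alpha+\tilde\ell}g\|_q$ via a smooth multiplier on $\supp\chi_L$, and the remaining $(\alpha-\tilde\alpha)+(\ell-\tilde\ell)$ derivatives landing on the kernel each supply a $(1+t)^{-1/2}$ factor through $|\xi|^j e^{-c\nu|\xi|^2 t}\le C(1+t)^{-j/2}$.

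The main obstacle is the $r=\infty$ endpoint: the extra decay $-(1/q-1/p)$ beyond the heat-kernel rate cannot be read off from any pointwise symbol bound, because it reflects cancellation in the oscillatory factors $\cos(\beta|\xi|t\phi_{\nu,\beta})$ and $\sin(\beta|\xi|t\phi_{\nu,\beta})$. Extracting this gain via the annular-Gaussian picture (or equivalently via stationary phase on the oscillatory integral defining $G_{jL}^{(\beta)}$), and making sure the perturbed phase $\phi_{\nu,\beta}$ does not degrade the rate when one passes from $\mathcal{G}_j^{(\beta)}\chi_L$ back to $\mathcal{K}_j^{(\beta)}\chi_L$, is the step that requires the most care.
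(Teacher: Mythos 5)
The paper does not prove Lemma \ref{Lem:2.2}: it is quoted from \cite{Ponce}, \cite{Shibata}, \cite{K-S}, so there is no in-paper argument to compare against. Your outline is essentially the standard diffusion-wave proof from those references (Hoff--Zumbrun/Kobayashi--Shibata): Young's inequality against $L^r$ bounds for the kernels $G_{jL}^{(\beta)}(t)$, with the extra $-(\frac1q-\frac1p)$ gain coming from the fact that $G_1^{(\beta)}(t)=p_t^{\nu}\ast\bigl(\tfrac{1}{4\pi\beta^{2}t}\sigma_{|x|=\beta t}\bigr)$ is an annular Gaussian of amplitude $t^{-3/2}$ and support volume $t^{5/2}$ (your normalization $\tfrac{1}{4\pi\beta t}$ should be $\tfrac{1}{4\pi\beta^{2}t}$, which does not affect the rates), and your exponent bookkeeping checks out, including the relation $\mathcal{G}_0=\partial_t\mathcal{G}_1+\tfrac{\nu|\xi|^2}{2}\mathcal{G}_1$ and the reduction of \eqref{eq:2.7} to the $\mathcal{K}_{00}^{(\beta)}$ piece via \eqref{eq:2.4}. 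The two places where the outline is asserted rather than established are exactly the ones you flag: (i) passing from $\mathcal{G}_j^{(\beta)}\chi_L$ to $\mathcal{K}_j^{(\beta)}\chi_L$ requires controlling the perturbed phase $\beta|\xi|t\,\phi_{\nu,\beta}$ at the $r=1,\infty$ endpoints (either by redoing the stationary-phase/annular analysis with the perturbed sphere, whose group velocity is $\beta\tfrac{\xi}{|\xi|}(1+O(|\xi|^2))$, or by estimating the difference as the paper does in Section 3 for the finer approximation formulas); and (ii) the claim that each extra derivative supplies $(1+t)^{-1/2}$ via $|\xi|^{j}e^{-c\nu|\xi|^{2}t}\le C(1+t)^{-j/2}$ is only a symbol bound and suffices at $r=2$; at $r=1,\infty$ it must again be extracted from the kernel picture (radial differentiation across the width-$\sqrt{t}$ shell). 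With those two steps carried out, the argument is complete and coincides with the proofs in the cited sources.
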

An easy computation shows that the middle and high frequency part of for $G_{0}^{(\beta)}(t,x)$ and $G_{1}^{(\beta)}(t,x)$ and their Riesz transform decays sufficiently fast so that it does not effect the asymptotic profiles.
This fact is summarized as follows;
\begin{lem} \label{Lem:2.4}
	Let $\alpha, \ell \ge 0$, $1 \le p \le \infty$ and $t>0$.
	Then it holds that 
	\begin{equation} \label{eq:2.11}
		\begin{split}
			& \| 
			\partial^{\ell}_{t} \nabla^{\alpha} \mathcal{R}_{a} \mathcal{R}_{b} \mathcal{F}^{-1} [\mathcal{G}_{0}^{(\beta)}(t,\xi) (\chi_{M} +\chi_{H}) ]\|_{p} 
			+
			\| \partial^{\ell}_{t} \nabla^{\alpha}  \mathcal{F}^{-1} [\mathcal{G}_{0}^{(\beta)}(t,\xi) (\chi_{M} +\chi_{H}) ]\|_{p} \\
			& \le C e^{-ct} t^{-\frac{3}{2}(1-\frac{1}{p})-\frac{\alpha+\ell}{2}}
		\end{split}
	\end{equation}
	and 
	\begin{equation} \label{eq:2.12}
		\begin{split}
			& \|
			\partial^{\ell}_{t} \nabla^{\alpha} \mathcal{R}_{a} \mathcal{R}_{b} \mathcal{F}^{-1} [\mathcal{G}_{1}^{(\beta)}(t,\xi) (\chi_{M} +\chi_{H}) ]\|_{p} 
			+\|
			\partial^{\ell}_{t} \nabla^{\alpha} \mathcal{F}^{-1} [\mathcal{G}_{1}^{(\beta)}(t,\xi) (\chi_{M} +\chi_{H}) ]\|_{p} \\
			& \le C e^{-ct} t^{-\frac{3}{2}(1-\frac{1}{p})-\frac{\alpha+\ell}{2}},
		\end{split}
	\end{equation}
	where 
	\begin{equation*} 
		\begin{split}
			\mathcal{R}_{a} g:= \mathcal{F}^{-1} \left[\frac{\xi_{a}}{|\xi|} \hat{g} \right]
		\end{split}
	\end{equation*}
	for $a=1,2,3$.
\end{lem}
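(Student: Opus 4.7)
The plan is to exploit two features of the symbols $\mathcal{G}_j^{(\beta)}(t,\xi)(\chi_M+\chi_H)$: on $\supp(\chi_M+\chi_H)\subset\{|\xi|\ge c_0/2\}$ the factor $e^{-\nu|\xi|^2 t/2}$ provides a uniform exponential time decay, while its Gaussian profile produces the heat-kernel-type $L^p$ scaling in $t$. Since $|\xi|$ is bounded away from $0$ on the support, the Riesz multipliers $\xi_a\xi_b/|\xi|^2$ are smooth and bounded there, and the factor $(\beta|\xi|)^{-1}$ appearing in $\mathcal{G}_1^{(\beta)}$ is likewise smooth; moreover, $\nabla^\alpha\partial_t^\ell$ introduces polynomial factors in $\xi$ of total degree at most $\alpha + 2\ell$, multiplied by an oscillation $\cos(\beta|\xi|t)$ or $\sin(\beta|\xi|t)$ of modulus $\le 1$. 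It therefore suffices to estimate expressions of the form $\|\mathcal{F}^{-1}[P(\xi)\,e^{-\nu|\xi|^2 t/2}\,\phi(\xi,t)(\chi_M+\chi_H)]\|_p$ with $P$ a polynomial of degree $\le \alpha+2\ell$ and $|\phi|\le 1$.

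The key decomposition is $e^{-\nu|\xi|^2 t/2} = e^{-\nu|\xi|^2 t/4}\cdot e^{-\nu|\xi|^2 t/4}$: on the support, the first factor is uniformly bounded by $e^{-\nu c_0^2 t/16}$, yielding the $e^{-ct}$ prefactor, while the second retains a full Gaussian. For $p=2$, Plancherel reduces the problem to $Ce^{-ct}\|P(\xi)e^{-\nu|\xi|^2 t/4}\|_2$, and the rescaling $\xi = \eta/\sqrt{t}$ gives the claimed $Ce^{-ct}t^{-3/4-(\alpha+\ell)/2}$. For $p=\infty$, the bound $\|\check f\|_\infty\le C\|f\|_1$ together with the same rescaling yields $Ce^{-ct}t^{-3/2-(\alpha+\ell)/2}$. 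Riesz--Thorin interpolation then covers $p\in[2,\infty]$; for $p\in[1,2)$ I would instead derive the pointwise kernel bound $|K(t,x)|\le Ce^{-ct}t^{-3/2-(\alpha+\ell)/2}(1+|x|^2/t)^{-N}$ for arbitrarily large $N$ by repeated integration by parts in $\xi$ against $e^{ix\cdot\xi}$, whose $x$-integral then produces the stated $L^p$ decay uniformly in $p\in[1,\infty]$.

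The main obstacle lies in the bookkeeping for the integration by parts: each $\xi$-derivative landing on $e^{-\nu|\xi|^2 t/2}$ or on $\cos(\beta|\xi|t)$ produces factors of the form $t|\xi|$ or $t|\xi|^2$. These must be absorbed by the Gaussian, which is possible precisely because $|\xi|\ge c_0/2$ on the support, so that $t|\xi|^2 e^{-\nu|\xi|^2 t/4}$ remains uniformly controlled once the $e^{-ct}$ factor has been extracted. Once this pointwise estimate is in hand, both halves of the lemma follow at once by $x$-integration; the $\mathcal{G}_1$ case is handled identically to $\mathcal{G}_0$ after noting that $(\beta|\xi|)^{-1}\chi_{M+H}$ is a bounded smooth symbol, and the Riesz-transform versions follow immediately from the observation made in the reduction step.
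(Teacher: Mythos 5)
The paper never proves Lemma \ref{Lem:2.4} (it is dismissed as ``an easy computation''), so there is nothing to compare against line by line; your write-up is a reasonable and essentially correct realization of that computation. The core mechanism is right: on $\supp(\chi_M+\chi_H)\subset\{|\xi|\ge c_0/2\}$ split $e^{-\nu|\xi|^2t/2}=e^{-\nu|\xi|^2t/4}\cdot e^{-\nu|\xi|^2t/4}$ to extract $e^{-ct}$, keep the remaining Gaussian for the $t$-scaling, observe that $\xi_a\xi_b/|\xi|^2$ and $(\beta|\xi|)^{-1}$ are smooth bounded symbols away from the origin, and for $p<2$ pass to a pointwise kernel bound by integration by parts, absorbing the factors $t|\xi|$ and $\beta t$ into the Gaussian and into $e^{-ct}$. (Two cosmetic points: for a fixed function the interpolation between $p=2$ and $p=\infty$ is just log-convexity of $L^p$ norms, not Riesz--Thorin; and the kernel-bound argument already covers all $p\in[1,\infty]$, making the first route redundant.)

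The one genuine slip is the exponent bookkeeping at the end of your $p=2$ and $p=\infty$ steps. As you yourself note, $\partial_t^\ell$ produces monomials of degree up to $2\ell$ (from $\ell$ hits on the exponential), so the rescaling $\xi=\eta/\sqrt t$ yields at worst $Ce^{-ct}\,t^{-\frac32(1-\frac1p)-\frac\alpha2-\ell}$, not the stated $Ce^{-ct}\,t^{-\frac32(1-\frac1p)-\frac{\alpha+\ell}2}$; the two differ by $t^{-\ell/2}$, which is unbounded as $t\to0^+$. This is not something you can repair: for $\ell=1$, $\alpha=0$, $p=2$ the function $\partial_t\mathcal F^{-1}[\mathcal G_0^{(\beta)}(\chi_M+\chi_H)]$ genuinely has $L^2$ norm of order $t^{-7/4}$ as $t\to0^+$ (the term $\tfrac{\nu|\xi|^2}{2}e^{-\nu|\xi|^2t/2}$ dominates), whereas \eqref{eq:2.11} claims $t^{-5/4}$. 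So you should either state the bound you actually obtain for $0<t\le1$ (with $-\frac\alpha2-\ell$ in place of $-\frac{\alpha+\ell}2$), or observe that for $t\ge1$ the factor $e^{-ct}$ absorbs any polynomial discrepancy so that the stated inequality holds there --- which is the only regime in which the lemma is invoked in the paper (Sections 3 and 7, asymptotics as $t\to\infty$). As written, your assertion that the scaling ``gives the claimed'' exponent papers over this gap.
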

%

\subsection{Estimates for the linear wave equation}
In this subsection, 
we recall well-known estimates for the solutions to the Cauchy problem of wave equations:  
\begin{equation} \label{eq:2.13}
\left\{
\begin{split}
& \partial_{t}^{2} w -\beta^{2} \Delta w   = 0, \quad t>0, \quad x \in \R^{3}, \\
& w(0,x)=w_{0}(x), \quad \partial_{t} w(0,x)=w_{1}(x) , \quad x \in \R^{3}, 
\end{split}
\right.
\end{equation}
where $w=w(t,x):(0,\infty)\times \R^{3}\to \R$ and $\beta>0$.
Now we define the fundamental solutions to \eqref{eq:2.13};
\begin{equation*} 
W^{(\beta)}_{0}(t)g := \mathcal{F}^{-1}[\cos (t \beta |\xi|) \hat{g}],
\end{equation*}
\begin{equation*}
W^{(\beta)}_{1}(t)g := \mathcal{F}^{-1}\left[\frac{\sin (t \beta |\xi|)}{\beta |\xi|} \hat{g} \right].
\end{equation*}
We firstly state estimates for the $L^{\infty}$ norms of the fundamental solutions of \eqref{eq:2.13}. 
\begin{lem} \label{Lem:2.5}
	There exists a constant $C>0$ such that 
	\begin{equation} \label{eq:2.14}
	\| W^{(\beta)}_{0}(t)g \|_{\infty} \le Ct^{-1} \| g \|_{\dot{W}^{2,1}}, 
	\end{equation}
	\begin{equation} \label{eq:2.15}
	\| W^{(\beta)}_{1}(t)g \|_{\infty} \le Ct^{-1} \| g \|_{\dot{W}^{1,1}}
	\end{equation}
	for $t>0$.
\end{lem}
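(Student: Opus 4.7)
The plan is to use the classical Kirchhoff representation for the three-dimensional wave equation \eqref{eq:2.13} and then convert the resulting spherical integrals into $L^{1}$ volume integrals of derivatives of $g$ via a fundamental-theorem-of-calculus identity along radial rays. Specifically, a direct inverse Fourier transform (or equivalently the spherical-means solution of \eqref{eq:2.13}) gives
\begin{equation*}
W_{1}^{(\beta)}(t)g(x)=\frac{t}{4\pi}\int_{S^{2}}g(x+\beta t\omega)\,d\sigma(\omega),\qquad W_{0}^{(\beta)}(t)g(x)=\partial_{t}W_{1}^{(\beta)}(t)g(x),
\end{equation*}
so bounding $\|W_{j}^{(\beta)}(t)g\|_{\infty}$ reduces to estimating the spherical mean $M(x,r):=(4\pi)^{-1}\int_{S^{2}}g(x+r\omega)\,d\sigma(\omega)$ and its radial derivative at $r=\beta t$.

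For \eqref{eq:2.15}, I would invoke the radial identity
\begin{equation*}
g(x+r\omega)=-\int_{r}^{\infty}\omega\cdot\nabla g(x+s\omega)\,ds,
\end{equation*}
valid first for Schwartz $g$ and then extended by density. Substituting into the definition of $M$ and changing variables $z=s\omega$, so that $ds\,d\sigma(\omega)=|z|^{-2}dz$, yields
\begin{equation*}
M(x,r)=-\frac{1}{4\pi}\int_{|z|\ge r}\frac{1}{|z|^{2}}\frac{z}{|z|}\cdot\nabla g(x+z)\,dz,
\end{equation*}
whose modulus is at most $(4\pi r^{2})^{-1}\|\nabla g\|_{1}$. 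Taking $r=\beta t$ and multiplying by $t$ produces \eqref{eq:2.15}.

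For \eqref{eq:2.14}, write $W_{0}^{(\beta)}(t)g(x)=M(x,\beta t)+\beta t\,\partial_{r}M(x,\beta t)$. Since $\partial_{r}M(x,r)=(4\pi)^{-1}\int_{S^{2}}\omega\cdot\nabla g(x+r\omega)\,d\sigma(\omega)$, applying the radial identity to $\omega\cdot\nabla g$ and performing the same change of variables gives $|\beta t\,\partial_{r}M(x,\beta t)|\le C t^{-1}\|\nabla^{2}g\|_{1}$. For the $M(x,\beta t)$ term, iterating the identity and using Fubini shows
\begin{equation*}
g(x+r\omega)=\int_{r}^{\infty}(s-r)\,(\omega\cdot\nabla)^{2}g(x+s\omega)\,ds,
\end{equation*}
and once more the change of variables $z=s\omega$ produces $|M(x,\beta t)|\le C t^{-1}\|\nabla^{2}g\|_{1}$. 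Adding the two contributions yields \eqref{eq:2.14}.

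No step poses a real obstacle; the entire argument is a classical dispersive estimate in three dimensions. The only technicality is justifying the improper radial integrations, which is standard once one approximates by Schwartz functions and passes to the limit in the homogeneous Sobolev spaces $\dot{W}^{1,1}$ and $\dot{W}^{2,1}$.
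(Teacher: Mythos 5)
Your argument is correct, and it is essentially the standard proof of these classical $L^{1}$--$L^{\infty}$ dispersive estimates for the three-dimensional wave equation; the paper itself gives no proof and simply refers to \cite{S-S}, where the same spherical-means/integration-along-rays argument appears. The one computation worth writing out explicitly is the kernel bound for the $M(x,\beta t)$ term in \eqref{eq:2.14}: after the change of variables one gets the kernel $(|z|-r)/|z|^{2}$ on $\{|z|\ge r\}$, and $\sup_{u\ge r}(u-r)/u^{2}=1/(4r)$, which indeed yields the required $Ct^{-1}\Vert\nabla^{2}g\Vert_{1}$ with $r=\beta t$.
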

Lemma \ref{Lem:2.5} is well-known.
For the proof, see e.g. \cite{S-S}. 

The following lemma states the $L^{p}$ boundedness of the fundamental solutions to \eqref{eq:2.13}. 
\begin{lem}
	Let $1 \le p \le \infty$, $\alpha$, $\ell \ge 0$ and $\gamma>0$. There exists a constant $C>0$ such that 
	\begin{equation} \label{eq:2.16}
		\| \partial_{t}^{\ell} W^{(\beta)}_{0}(t)g \|_{p} \le C ( \| \nabla^{\ell} g \|_{p}+t \| \nabla^{\ell+1} g \|_{p}), 
	\end{equation} 
	\begin{equation} \label{eq:2.17}
		\| \partial_{t}^{\ell}  W^{(\beta)}_{1}(t)g \|_{p} \le C t \| \nabla^{\ell} g \|_{p}
	\end{equation}
	for $t >0$.
\end{lem}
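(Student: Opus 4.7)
The plan is to establish both estimates from the classical three-dimensional Kirchhoff representation combined with Minkowski's integral inequality, then to bootstrap to arbitrary $\ell$ via the wave equation itself. Starting from the spherical means formula
$$W_{1}^{(\beta)}(t)g(x) = \frac{t}{4\pi}\int_{S^{2}} g(x+\beta t \omega)\, d\sigma(\omega),$$
which is the explicit solution of \eqref{eq:2.13} with $w_{0}=0$ and $w_{1}=g$, one takes $\|\cdot\|_{p}$ on both sides, applies Minkowski's inequality to the integral over $S^{2}$, and uses translation invariance of the $L^{p}$ norm to conclude $\|W_{1}^{(\beta)}(t)g\|_{p}\le t\|g\|_{p}$; this is \eqref{eq:2.17} at $\ell=0$. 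Differentiating the representation in $t$ gives
$$W_{0}^{(\beta)}(t)g(x) = \frac{1}{4\pi}\int_{S^{2}} g(x+\beta t \omega)\, d\sigma(\omega) + \frac{\beta t}{4\pi}\int_{S^{2}} \omega\cdot\nabla g(x+\beta t \omega)\, d\sigma(\omega),$$
and the same Minkowski argument yields \eqref{eq:2.16} at $\ell=0$.

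For $\ell\ge 1$, I would use the operator identities $\partial_{t} W_{1}^{(\beta)} = W_{0}^{(\beta)}$ and $\partial_{t} W_{0}^{(\beta)} = \beta^{2}\Delta W_{1}^{(\beta)}$, the latter of which encodes that $W_{1}^{(\beta)}(t)g$ satisfies the homogeneous wave equation. Iterating these gives $\partial_{t}^{2m}W_{j}^{(\beta)}(t) = \beta^{2m} W_{j}^{(\beta)}(t)\Delta^{m}$ for $j\in\{0,1\}$, while for odd $\ell=2m+1$ one obtains $\partial_{t}^{\ell}W_{1}^{(\beta)}(t) = \beta^{2m} W_{0}^{(\beta)}(t)\Delta^{m}$ and $\partial_{t}^{\ell}W_{0}^{(\beta)}(t) = \beta^{2m+2} W_{1}^{(\beta)}(t)\Delta^{m+1}$. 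Applying the already-proven $\ell=0$ bounds to these factored expressions, together with the elementary fact that $\Delta^{k}$ is a sum of $(2k)$-th order partial derivatives so that $\|\Delta^{k} h\|_{p}\le C\|\nabla^{2k}h\|_{p}$, reduces everything to the case $\ell=0$ and produces both \eqref{eq:2.16} and \eqref{eq:2.17}.

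The delicate point I expect to be the main obstacle is the appearance of $W_{0}^{(\beta)}$ in the reduction of $\partial_{t}^{2m+1}W_{1}^{(\beta)}$: since the $\ell=0$ estimate for $W_{0}^{(\beta)}$ carries an unavoidable lower-order term $\|h\|_{p}$ alongside $t\|\nabla h\|_{p}$ (this is essentially the Miyachi--Peral obstruction for $\cos(t\beta|\xi|)$ on $L^{p}$), this step introduces an extra contribution of the form $\|\nabla^{2m}g\|_{p}$ in addition to the desired $t\|\nabla^{2m+1}g\|_{p}$ on the right-hand side of \eqref{eq:2.17}. Handling this requires careful bookkeeping, or observing that in the applications in subsequent sections only the low-$\ell$ versions of the estimate are invoked, in which case the spherical-means computation above gives the bounds directly.
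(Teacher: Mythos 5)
Your argument is correct and is exactly the route the paper intends: the text gives no proof beyond citing the explicit representation formula, and your spherical-means computation with Minkowski's inequality is precisely that "direct consequence," supplemented by the standard operator identities $\partial_t W_1^{(\beta)}=W_0^{(\beta)}$ and $\partial_t W_0^{(\beta)}=\beta^2\Delta W_1^{(\beta)}$ to handle $\ell\ge 1$. The obstacle you flag for odd $\ell$ in \eqref{eq:2.17} is not a defect of your method but of the statement itself: already for $\ell=1$ one has $\partial_t W_1^{(\beta)}(t)g=W_0^{(\beta)}(t)g\to g$ as $t\to 0^+$, so $\|\partial_t W_1^{(\beta)}(t)g\|_p\to\|g\|_p$ while $Ct\|\nabla g\|_p\to 0$, and the correct bound for odd $\ell$ is $C(\|\nabla^{\ell-1}g\|_p+t\|\nabla^{\ell}g\|_p)$ — this has nothing to do with the Miyachi--Peral phenomenon, only with $W_0^{(\beta)}(0)=\mathrm{Id}$. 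Since the paper invokes \eqref{eq:2.16} and \eqref{eq:2.17} only with $\ell=0$ (in \eqref{eq:3.43} and in the proof of Corollary \ref{cor:7.2}), your $\ell=0$ computation already covers every use made of the lemma.
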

	The estimates \eqref{eq:2.16}-\eqref{eq:2.17} are also well-known and the direct consequence of the representation formula of the fundamental solutions (cf.\cite{E}).
%
%
\subsection{Useful estimates}
In this subsection, we recall the basic estimates, 
which are frequently used in what follows. 
We begin with the $L^{p}$-$L^{p}$ boundedness of the Riesz transform. 
\begin{lem}
	Let $1<p<\infty$. 
	There exists $C>0$ such that 
	\begin{equation} \label{eq:2.18}
		\begin{split}
			\| \mathcal{R}_{a} g\|_{p} \le C \| g \|_{p}.
		\end{split}
	\end{equation}
\end{lem}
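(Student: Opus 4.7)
The plan is to verify that $\mathcal{R}_a$ fits into the framework of classical Calder\'on--Zygmund convolution operators and to invoke the standard $L^p$-boundedness theorem for such operators. First I would establish the $L^2$-bound via Plancherel's theorem: since the Fourier multiplier $m(\xi)=\xi_a/|\xi|$ satisfies $|m(\xi)|\le 1$ a.e., we immediately get $\|\mathcal{R}_a g\|_{2}\le \|g\|_{2}$ for every $g\in L^{2}(\R^{3})$, which serves as the endpoint for the interpolation argument below.

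Second, I would realize $\mathcal{R}_a$ as a principal-value convolution operator with a kernel of the form $K_{a}(x)=c\,x_{a}/|x|^{4}$ on $\R^{3}$, the constant $c$ being determined by inverting the Fourier transform of $\xi_{a}/|\xi|$ in the tempered-distribution sense. This kernel is homogeneous of degree $-3$, so it satisfies the correct size bound $|K_{a}(x)|\le C|x|^{-3}$; it has derivative satisfying the H\"ormander smoothness condition $|\nabla K_{a}(x)|\le C|x|^{-4}$; and because $K_{a}$ is odd in the variable $x_{a}$, its integral over any spherical annulus $\{r<|x|<R\}$ vanishes, furnishing the required cancellation.

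Third, armed with the $L^{2}$-bound and the three structural properties of $K_{a}$, the Calder\'on--Zygmund theorem for singular integrals yields the weak-type $(1,1)$ estimate $|\{x\in\R^{3}:|\mathcal{R}_a g(x)|>\lambda\}|\le C\lambda^{-1}\|g\|_{1}$. Interpolating this with the $L^{2}$-bound via Marcinkiewicz gives strong $L^{p}$-boundedness in the range $1<p\le 2$. For $2\le p<\infty$ I would then use duality: since the multiplier $\xi_{a}/|\xi|$ is odd, the formal adjoint satisfies $\mathcal{R}_a^{*}=-\mathcal{R}_a$, so the bound transfers to the dual range through the $L^{p}$-$L^{p'}$ pairing.

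The only real obstacle, in a fully self-contained treatment, is the Calder\'on--Zygmund decomposition needed to derive the weak $(1,1)$ estimate from the $L^{2}$-bound and the kernel conditions. For the present paper, however, the result is entirely classical, and in practice a citation to a standard harmonic-analysis reference (e.g.\ Stein, \emph{Singular Integrals and Differentiability Properties of Functions}) is sufficient, with no need to reproduce the decomposition.
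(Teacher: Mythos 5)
Your outline is the standard Calder\'on--Zygmund proof of $L^{p}$-boundedness of the Riesz transform, and it is correct; the paper itself gives no proof at all, deferring entirely to a textbook reference (\cite{Gr}), which is exactly the classical treatment you describe, so you are taking essentially the same route. The only cosmetic point is that with the paper's convention $\mathcal{R}_{a}g=\mathcal{F}^{-1}[\tfrac{\xi_{a}}{|\xi|}\hat{g}]$ (no factor $-i$) the multiplier is real, so the $L^{2}$-adjoint is $+\mathcal{R}_{a}$ rather than $-\mathcal{R}_{a}$; either sign makes your duality step work unchanged.
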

For the proof, see e.g. \cite{Gr}.

The following estimate is useful to obtain the decay properties of low frequency parts of our problem.

\begin{lem}[\cite{K-S}]
	Let $\ell \ge 0$, $\alpha \ge 0$ and $\ell+\alpha \ge 1$. 
	There exists a constant $C>0$ such that 
	\begin{equation} \label{eq:2.19}
		\begin{split}
			\| \partial_{t}^{\ell} \nabla^{\alpha} \mathcal{R}_{a}\mathcal{R}_{b} \mathcal{F}^{-1}[e^{-\frac{\nu t |\xi|^{2}}{2}} \chi_{L}] \|_{1} \le C(1+t)^{-\frac{\alpha}{2}-\ell}
		\end{split}
	\end{equation}
	for $t \ge 0$.
\end{lem}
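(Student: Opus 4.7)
The plan is to identify the Fourier multiplier explicitly, reduce the $L^{1}(\R^3)$-bound to a weighted $L^{2}$-estimate on the Fourier side, and then extract the time decay via a parabolic rescaling. Since $\partial_t^\ell$ produces a factor $(-\nu|\xi|^2/2)^\ell$ and the Riesz composition $\mathcal{R}_a\mathcal{R}_b$ has symbol $-\xi_a\xi_b/|\xi|^2$, the object whose $L^1$-norm I want to bound is $\mathcal{F}^{-1}[m(t,\cdot)]$, where
\begin{equation*}
m(t,\xi):=c_\ell\,|\xi|^{\alpha+2\ell-2}\xi_a\xi_b\,e^{-\nu t|\xi|^2/2}\chi_L(\xi).
\end{equation*}
(I interpret $\nabla^\alpha$ as a multiplier of size $|\xi|^\alpha$; integer-order Cartesian derivatives are handled identically.) The hypothesis $\alpha+\ell\ge 1$ together with $\ell\ge 0$ gives $\alpha+2\ell\ge 1$, so $m(t,\xi)$ vanishes at the origin to positive homogeneous order.

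The workhorse is the weighted Plancherel inequality on $\R^3$:
\begin{equation*}
\|F\|_{L^1(\R^3)} \le \bigl\|(1+|y|)^{-2}\bigr\|_{L^2}\,\bigl\|(1+|y|)^2 F\bigr\|_{L^2} \le C\bigl(\|\widehat F\|_{L^2}+\|\Delta_\xi \widehat F\|_{L^2}\bigr),
\end{equation*}
valid because $(1+|y|)^{-2}\in L^2(\R^3)$. For $t\in[0,1]$ I would apply this directly with $\widehat F=m(t,\cdot)$; $\|m(t,\cdot)\|_{L^2}$ is bounded immediately by compact support, and the dominant contribution to $\|\Delta_\xi m(t,\cdot)\|_{L^2}^2$ comes from the singularity of $|\xi|^{\alpha+2\ell-2}\xi_a\xi_b$ at the origin, where $\Delta_\xi$ produces a term of size $|\xi|^{\alpha+2\ell-2}$. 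A polar-coordinates computation gives $\||\xi|^{\alpha+2\ell-2}\|_{L^2(|\xi|\le c_0)}^2 \sim \int_0^{c_0}r^{2(\alpha+2\ell)-2}dr$, which is finite exactly because $\alpha+2\ell>1/2$. Thus $\|\mathcal{F}^{-1}[m(t,\cdot)]\|_{L^1}\le C$ uniformly on $t\in[0,1]$.

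For $t\ge 1$ I would expose the expected decay via the rescaling $\eta=\sqrt{\nu t}\,\xi$. A direct change of variables in both the Fourier integral and the $L^1_x$-integral yields
\begin{equation*}
\|\mathcal{F}^{-1}[m(t,\cdot)]\|_{L^1_x}=(\nu t)^{-\alpha/2-\ell}\,\|\mathcal{F}^{-1}[h_t]\|_{L^1_y},\qquad h_t(\eta):=c_\ell|\eta|^{\alpha+2\ell-2}\eta_a\eta_b\,e^{-|\eta|^2/2}\chi_L(\eta/\sqrt{\nu t}).
\end{equation*}
Applying the same weighted Plancherel bound to $h_t$, the Gaussian $e^{-|\eta|^2/2}$ supplies all decay at infinity uniformly in $t\ge 1$, while the singularity at $\eta=0$ is of the same homogeneous order $\alpha+2\ell\ge 1$, so the same $L^2$-in-$\eta$ calculation applies. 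Contributions to $\Delta_\eta h_t$ from derivatives of $\chi_L(\eta/\sqrt{\nu t})$ are supported where $|\eta|\sim\sqrt{\nu t}$ and are negligible by the Gaussian. Hence $\|\mathcal{F}^{-1}[h_t]\|_{L^1_y}\le C$ uniformly in $t\ge 1$; combining with the small-time case yields $\|\mathcal{F}^{-1}[m(t,\cdot)]\|_{L^1}\le C(1+t)^{-\alpha/2-\ell}$ for all $t\ge 0$.

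The main obstacle is the Riesz-composition symbol $\xi_a\xi_b/|\xi|^2$: its second $\xi$-derivatives are of size $|\xi|^{-2}$, which is not in $L^2(\R^3)$ near the origin, so the weighted Plancherel inequality cannot be applied to the Riesz transforms alone. The hypothesis $\alpha+\ell\ge 1$ is precisely what is needed to tame this: the extra factor $|\xi|^{\alpha+2\ell}$ from $\partial_t^\ell\nabla^\alpha$ converts the singular part into $|\xi|^{\alpha+2\ell-2}\xi_a\xi_b$, whose second derivatives at the origin are of size $|\xi|^{\alpha+2\ell-2}$; this lies in $L^2(|\xi|\le c_0)$ exactly when $\alpha+2\ell>1/2$, which is guaranteed by $\alpha+\ell\ge 1$. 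Once this is in place, the rest of the proof is bookkeeping via weighted Plancherel and parabolic rescaling.
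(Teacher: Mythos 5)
The paper itself offers no proof of this lemma; it is imported from \cite{K-S}, so there is no internal argument to compare against. Your proof is correct and self-contained, and it uses exactly the machinery the authors deploy for analogous $L^{1}$ multiplier bounds elsewhere in the paper: the Carlson--Beurling-type reduction $\| F \|_{1} \le C(\| \widehat{F} \|_{2} + \| \nabla_{\xi}^{2}\widehat{F} \|_{2})$ is their estimate \eqref{eq:2.20}, which they apply in the same way in the proof of Lemma \ref{Lem:3.3} (cf.\ \eqref{eq:3.41}). The two key points are handled properly: (i) you correctly identify that the obstruction is the second $\xi$-derivative of the Riesz symbol, of size $|\xi|^{-2}\notin L^{2}_{loc}(\R^{3})$, and that the prefactor $|\xi|^{\alpha+2\ell}$ coming from $\partial_{t}^{\ell}\nabla^{\alpha}$ restores square-integrability of $\Delta_{\xi}m$ near the origin precisely when $\alpha+2\ell>\frac{1}{2}$, which follows from $\alpha+\ell\ge 1$; and (ii) the parabolic rescaling $\eta=\sqrt{\nu t}\,\xi$ correctly produces the factor $(\nu t)^{-\frac{\alpha}{2}-\ell}$ for $t\ge 1$, with the cut-off contributions killed by the Gaussian on $|\eta|\sim\sqrt{\nu t}$. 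The only point worth making explicit in a polished write-up is that for the borderline case $\alpha+2\ell=1$ the pointwise second derivatives of $|\xi|^{\alpha+2\ell-2}\xi_{a}\xi_{b}$ coincide with the distributional ones (no singular part at the origin, since the gradient is bounded and the pointwise Hessian is locally integrable in $\R^{3}$); this is needed to justify the identity $\mathcal{F}[|y|^{2}F]=-\Delta_{\xi}m$, but it is standard.
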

Finally, we summarize the interpolation estimates.
\begin{lem} \label{lem:2.9}
	There exists a constant $C>0$ such that 
	\begin{align}
			& \| g \|_{L^{1}(\R^{3})} \le C \| g \|_{L^{2}(\R^{3})}^{\frac{1}{4}} \| x^{2} u \|_{L^{2}(\R^{3})}^{\frac{3}{4}}
			=C \| \hat{g} \|_{L^{2}(\R^{3})}^{\frac{1}{4}} \| \nabla_{\xi}^{2} \hat{g} \|_{L^{2}(\R^{3})}^{\frac{3}{4}},  \label{eq:2.20} \\
			& \| g \|_{L^{\infty}(\R^{3})} \le C \| g \|_{L^{2}(\R^{3})}^{\frac{1}{4}} \| \nabla^{2} g \|_{L^{2}(\R^{3})}^{\frac{3}{4}}, \label{eq:2.21} \\
			& \| \nabla g \|_{L^{2p}(\R^{3})} \le C \| g \|_{L^{\infty}(\R^{3})}^{\frac{1}{2}} \| \nabla^{2} g \|_{L^{p}(\R^{3})}^{\frac{1}{2}}, \quad 1\le p< \infty, \label{eq:2.22} \\
		& \|  g \|_{L^{6}(\R^{3})} \le C \| \nabla g \|_{L^{2}(\R^{3})},  \label{eq:2.23} \\
		&  \| \nabla g \|_{L^{2}(\R^{3})} \le C \| \nabla g \|_{L^{1}(\R^{3})}+ C \| \nabla^{3} g \|_{L^{2}(\R^{3})}, \label{eq:2.24}
	\end{align}
	where $C$ is independent of $g$.
\end{lem}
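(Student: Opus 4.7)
The plan is to handle each of the five inequalities in Lemma \ref{lem:2.9} by a separate short, standard argument. For \eqref{eq:2.20} I would truncate at radius $R>0$ and apply Cauchy--Schwarz in each piece to obtain $\|g\|_{1}\le C(R^{3/2}\|g\|_{2}+R^{-1/2}\||x|^{2}g\|_{2})$, using $\int_{|x|>R}|x|^{-4}\,dx\simeq R^{-1}$ in $\R^{3}$, and then optimize by choosing $R=(\||x|^{2}g\|_{2}/\|g\|_{2})^{1/2}$ to recover the exponents $\tfrac14$ and $\tfrac34$. To pass to the Fourier-side form I would use Plancherel together with $\widehat{x_{i}x_{j}g}=-\partial_{\xi_{i}}\partial_{\xi_{j}}\hat g$; the equivalence $\||x|^{2}g\|_{2}\simeq \|\nabla_\xi^{2}\hat g\|_{2}$ then follows from $|x_{i}x_{j}|\le|x|^{2}$ together with $|x|^{2}=\sum_{i}x_{i}^{2}$.

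The three Sobolev-type estimates \eqref{eq:2.21}--\eqref{eq:2.23} are classical. I would simply invoke the Gagliardo--Nirenberg inequality on $\R^{3}$ for \eqref{eq:2.21} (exponents matching the scaling $0=\tfrac14\cdot(-\tfrac32)+\tfrac34\cdot(-\tfrac32+2)$) and for \eqref{eq:2.22} (scaling $1-\tfrac{3}{2p}=\tfrac12\cdot 0+\tfrac12(2-\tfrac{3}{p})$), and the critical Sobolev embedding $\dot H^{1}(\R^{3})\hookrightarrow L^{6}(\R^{3})$ for \eqref{eq:2.23}; no genuinely new work is required.

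For \eqref{eq:2.24} I would split in frequency with the cutoffs $\chi_{L}$ and $1-\chi_{L}$ from Section 2.2. Plancherel, Cauchy--Schwarz in $\xi$, and $\|\widehat{\nabla g}\|_{\infty}\le C\|\nabla g\|_{1}$ yield
\[
\bigl\|\mathcal F^{-1}[\chi_{L}\widehat{\nabla g}]\bigr\|_{2}\le \|\chi_{L}\|_{2}\,\|\widehat{\nabla g}\|_{\infty}\le C\|\nabla g\|_{1},
\]
while on $\supp(1-\chi_{L})$ the multiplier $|\xi|^{-2}$ is bounded, giving
\[
\bigl\|\mathcal F^{-1}[(1-\chi_{L})\widehat{\nabla g}]\bigr\|_{2}\le \bigl\||\xi|^{-2}(1-\chi_{L})\bigr\|_{\infty}\,\||\xi|^{2}\widehat{\nabla g}\|_{2}\le C\|\nabla^{3}g\|_{2}.
\]
Summing these two bounds produces \eqref{eq:2.24}. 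None of the five steps is genuinely difficult; the only place where a small amount of care is warranted is the component-wise Fourier equivalence $\||x|^{2}g\|_{2}\simeq \|\nabla_\xi^{2}\hat g\|_{2}$ in \eqref{eq:2.20}, which I would justify explicitly rather than assert.
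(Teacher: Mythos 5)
Your proposal is correct and, for the one estimate the paper actually proves (\eqref{eq:2.24}), it follows essentially the same route: a low/high frequency split in which the low part is controlled by $\|\nabla g\|_{1}$ (your H\"older bound $\|\chi_{L}\widehat{\nabla g}\|_{2}\le\|\chi_{L}\|_{2}\|\widehat{\nabla g}\|_{\infty}$ is the Plancherel twin of the paper's Young-inequality bound $\|\mathcal F^{-1}[\chi_{L}]\|_{2}\|\nabla g\|_{1}$) and the high part by $\||\xi|^{2}\widehat{\nabla g}\|_{2}$ using boundedness of $|\xi|^{-2}$ away from the origin. For \eqref{eq:2.20}--\eqref{eq:2.23} the paper simply cites standard references, so your self-contained truncation-and-optimization argument for \eqref{eq:2.20} and your appeal to Gagliardo--Nirenberg and the Sobolev embedding are entirely adequate.
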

\begin{proof}
	The proof of \eqref{eq:2.20}-\eqref{eq:2.23} is well-known. 
	See e.g. \cite{BTW} for \eqref{eq:2.20} and \cite{C} for \eqref{eq:2.21}-\eqref{eq:2.23}. 
	Here we only show the estimate \eqref{eq:2.24}.
	By the Plancherel formula and the Hausdorff-Young inequality, we see that 
	\begin{equation*}
		\begin{split}
			\| \nabla g \|_{2} & \le  C \| \xi \chi_{L} \hat{g} \|_{2} +\| \xi (\chi_{M} +\chi_{H})  \hat{g} \|_{2} \le  C \| \mathcal{F}^{-1} [\chi_{L}] \ast \mathcal{F}^{-1} [\xi \hat{g}] \|_{2} 
			+\| \xi^{3} (\chi_{M} +\chi_{H})  \hat{g} \|_{2} \\
			& \le C \| \mathcal{F}^{-1} [\chi_{L}]  \|_{2} \|\mathcal{F}^{-1} [\xi \hat{g}] \|_{1} + \| \xi^{3} \hat{g} \|_{2} \\
			& \le C \| \nabla g \|_{1} + \| \nabla^{3} g \|_{2},
		\end{split}
	\end{equation*}
	which is the desired estimate.
	We complete the proof of Lemma \ref{lem:2.9}.
\end{proof} 
\section{Linear estimates for low frequency parts}
\subsection{Approximation of the low frequency parts}
This subsection is devoted to the proof of approximation formulas of the evolution operators $K_{0L}^{(\beta)}(t)$ and $K_{1L}^{(\beta)}(t)$ with the Riesz transforms.
For this aim, we introduce the notation 
\begin{align}
\mathbb{K}_{00}^{(\beta)}(t,x) & := \mathcal{R}_{a} \mathcal{R}_{b} \mathcal{F}^{-1}[\mathcal{K}_{00L}^{(\beta)}(t,\xi)\chi_{L} ], \label{eq:3.1} \\
\mathbb{K}_{1}^{(\beta)}(t,x) & :=\mathcal{R}_{a} \mathcal{R}_{b} \mathcal{F}^{-1}[\mathcal{K}_{1L}^{(\beta)}(t,\xi)\chi_{L} ], \label{eq:3.2}  \\ 
\mathbb{G}_{j}^{(\beta)}(t,x) & :=\mathcal{R}_{a} \mathcal{R}_{b} \mathcal{F}^{-1}[\mathcal{G}_{jL}^{(\beta)}(t,\xi)\chi_{L} ] \label{eq:3.3} 
\end{align}
for $j=0,1$ and $a,b=1,2,3$.
Now we formulate the main result of this subsection, which states that the large time behavior of the functions $\mathbb{K}_{00}^{(\beta)}(t,x)$ and 
$\mathbb{K}_{1}^{(\beta)}(t,x)$ are described by $\mathbb{G}_{0}^{(\beta)}(t,x)$ and $\mathbb{G}_{1}^{(\beta)}(t,x)$, depending on the order of the time derivatives.
\begin{prop} \label{Prop:3.1}
	Let $\alpha\ge \tilde{\alpha} \ge 0$, $\ell \ge \tilde{\ell} \ge 0$, $m \ge 0$, $1 \le q \le p\le \infty$ and $t \ge 0$.
	Then it holds that 
	\begin{equation} \label{eq:3.4}
		\begin{split}
			& \left\| \nabla^{\alpha} \left(
			\partial^{\ell}_{t} \mathbb{K}_{00}^{(\beta)}(t) \ast g -(-1)^{\frac{\ell}{2}} \beta^{\ell} \nabla^{\ell} \mathbb{G}_{0}^{(\beta)}(t) \ast g  
			\right)
			\right\|_{p} \\
			& \le C(1+t)^{-\frac{3}{2}(\frac{1}{q}-\frac{1}{p})-(\frac{1}{q}-\frac{1}{p})- \frac{\ell-\tilde{\ell}+ \alpha-\tilde{\alpha}}{2}} \| \nabla^{\tilde{\alpha}+\tilde{\ell}} g \|_{q}
		\end{split}
	\end{equation}
	for $\ell=2m$,
	\begin{equation} \label{eq:3.5}
		\begin{split}
			& \left\| \nabla^{\alpha} \left(
			\partial^{\ell}_{t} \mathbb{K}_{00}^{(\beta)}(t) \ast g -(-1)^{\frac{\ell+1}{2}} \beta^{\ell+1} \nabla^{\ell+1} \mathbb{G}_{1}^{(\beta)}(t)\ast g 
			\right)
			\right\|_{p} \\
			& \le C(1+t)^{-\frac{3}{2}(\frac{1}{q}-\frac{1}{p})-(\frac{1}{q}-\frac{1}{p})- \frac{\ell-\tilde{\ell}+ \alpha-\tilde{\alpha}}{2}} \| \nabla^{\tilde{\alpha}+\tilde{\ell}} g \|_{q}
		\end{split}
	\end{equation}
	for $\ell=2m+1$,
	\begin{equation} \label{eq:3.6}
		\begin{split}
			& \left\| \nabla^{\alpha} \left(
			\partial^{\ell}_{t} \mathbb{K}_{1}^{(\beta)}(t) \ast g - (-1)^{\frac{\ell}{2}} \beta^{\ell} \nabla^{\ell} \mathbb{G}_{1}^{(\beta)}(t) \ast g
			\right)
			\right\|_{p} \\ 
			& \le C(1+t)^{-\frac{3}{2}(\frac{1}{q}-\frac{1}{p})-(\frac{1}{q}-\frac{1}{p})+\frac{1}{2}- \frac{\ell-\tilde{\ell}+ \alpha-\tilde{\alpha}}{2}} \| \nabla^{\tilde{\alpha}+\tilde{\ell}} g \|_{q}
		\end{split}
	\end{equation}
	for $\ell=2m$ and 
	\begin{equation} \label{eq:3.7}
		\begin{split}
			& \left\| \nabla^{\alpha} \left(
			\partial^{\ell}_{t} \mathbb{K}_{1}^{(\beta)}(t) \ast g - (-1)^{\frac{\ell-1}{2}} \beta^{\ell-1} \nabla^{\ell-1} \mathbb{G}_{0}^{(\beta)}(t) \ast g
			\right)
			\right\|_{p} \\
			& \le C(1+t)^{-\frac{3}{2}(\frac{1}{q}-\frac{1}{p})-(\frac{1}{q}-\frac{1}{p})+\frac{1}{2}-\frac{\ell-\tilde{\ell}+ \alpha-\tilde{\alpha}}{2}} \| \nabla^{\tilde{\alpha}+\tilde{\ell}} g \|_{q}
		\end{split}
	\end{equation}
	for $\ell=2m+1$.
\end{prop}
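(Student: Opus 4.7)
The plan is to reduce each difference to a sum of Fourier multipliers of the form $t^j|\xi|^k$ acting on $\mathbb{G}_0^{(\beta)}$ or $\mathbb{G}_1^{(\beta)}$, and then to apply Lemma \ref{Lem:2.2} to each piece. The essential feature of the low-frequency regime is that $\phi_{\nu,\beta}=1+\psi$ with $\psi=-\nu^2|\xi|^2/(4\beta^2(\phi_{\nu,\beta}+1))=O(|\xi|^2)$ on $\supp \chi_L$, so that the damped-wave phases $t\beta|\xi|\phi_{\nu,\beta}$ differ from the model phases $t\beta|\xi|$ by a perturbation $t\beta|\xi|\psi$ of size $O(t|\xi|^3)$.

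First I would treat the base case $\ell=\tilde{\ell}=\alpha=\tilde{\alpha}=0$. The angle-addition formulas yield, on the Fourier side, the identity
\begin{equation*}
\mathcal{K}_{00}^{(\beta)}-\mathcal{G}_0^{(\beta)}=\mathcal{G}_0^{(\beta)}\bigl[\cos(t\beta|\xi|\psi)-1\bigr]-\beta|\xi|\mathcal{G}_1^{(\beta)}\sin(t\beta|\xi|\psi),
\end{equation*}
together with an analogous representation of $\mathcal{K}_{1}^{(\beta)}-\mathcal{G}_1^{(\beta)}$ involving $\mathcal{G}_0^{(\beta)}/(\beta|\xi|\phi_{\nu,\beta})$ and $\mathcal{G}_1^{(\beta)}$. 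Combining $|\cos x-1|\le x^2/2$, $|\sin x|\le|x|$, $|1-1/\phi_{\nu,\beta}|\le C|\xi|^2$ and $|t\beta|\xi|\psi|\le Ct|\xi|^3$, the two differences admit pointwise bounds of the form $Ct^2|\xi|^6|\mathcal{G}_0^{(\beta)}|+Ct|\xi|^4|\mathcal{G}_1^{(\beta)}|$ and $Ct^2|\xi|^6|\mathcal{G}_1^{(\beta)}|+C|\xi|^2|\mathcal{G}_1^{(\beta)}|+Ct|\xi|^2|\mathcal{G}_0^{(\beta)}|$ respectively. Lemma \ref{Lem:2.2} applied termwise (with $\alpha$ replaced by the exponent of $|\xi|$ in each multiplier) turns these into time-decay bounds, and a direct check shows that the term carrying the single $\sin(t\beta|\xi|\psi)$ factor saturates the targeted rate while every other term decays strictly faster.

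For the general case I would argue by induction on $\ell$, using the characteristic-root representations $\mathcal{K}_{00}^{(\beta)}=\tfrac{1}{2}(e^{\sigma_+^{(\beta)}t}+e^{\sigma_-^{(\beta)}t})$ and $\mathcal{K}_1^{(\beta)}=(e^{\sigma_+^{(\beta)}t}-e^{\sigma_-^{(\beta)}t})/(\sigma_+^{(\beta)}-\sigma_-^{(\beta)})$, with $\sigma_\pm^{(\beta)}=-\nu|\xi|^2/2\pm i\beta|\xi|\phi_{\nu,\beta}$, together with the analogous formulas for $\mathcal{G}_j^{(\beta)}$ obtained by replacing $\phi_{\nu,\beta}$ with $1$. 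Each $t$-derivative brings down factors $\sigma_\pm$; the binomial expansion isolates the top-order term $(\pm i\beta|\xi|\phi_{\nu,\beta})^\ell$, which after summing over the two roots gives the leading part $(-1)^{\lfloor\ell/2\rfloor}(\beta|\xi|\phi_{\nu,\beta})^\ell\mathcal{K}_{00}^{(\beta)}$ or $(\beta|\xi|\phi_{\nu,\beta})^\ell\mathcal{K}_1^{(\beta)}$ according to the parity of $\ell$, plus remainders carrying at least one additional factor of $|\xi|^2$ inherited from the damping piece $-\nu|\xi|^2/2$. Subtracting the analogous expansion for $\mathcal{G}_j^{(\beta)}$ and using $|\phi_{\nu,\beta}^\ell-1|\le C|\xi|^2$ reduces the claim to the $\ell=0$ decomposition multiplied by $|\xi|^\ell$, which is controlled by Lemma \ref{Lem:2.2} with $\alpha$ replaced by $\alpha+\ell$. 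Spatial derivatives $\nabla^\alpha$ and the parameters $\tilde{\ell},\tilde{\alpha}$ enter directly through the corresponding powers of $|\xi|$ in Lemma \ref{Lem:2.2}, while the short-time range $0<t\le 1$ is handled by the triangle inequality applied separately to each kernel.

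The main obstacle is the bookkeeping in the induction step for $\ell\ge 1$: one must keep the sign convention $(-1)^{\lfloor\ell/2\rfloor}$ and the alternation between $\mathbb{G}_0^{(\beta)}$ and $\mathbb{G}_1^{(\beta)}$ in the statement consistent with the parity-dependent leading order of the binomial expansion of $\sigma_\pm^\ell$, and to verify that every lower-order term from that expansion carries strictly more powers of $|\xi|^2$ than the leading one, so that it is absorbed by a Lemma \ref{Lem:2.2} estimate with a strictly faster decay rate than the one targeted.
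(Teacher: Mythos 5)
Your overall decomposition (isolating $\phi_{\nu,\beta}-1=O(|\xi|^2)$, Taylor/angle-addition expansion of the phase, parity bookkeeping in $\ell$) matches the paper's strategy, but the step ``Lemma \ref{Lem:2.2} applied termwise (with $\alpha$ replaced by the exponent of $|\xi|$ in each multiplier)'' hides a genuine gap, and it is exactly at the term you yourself identify as saturating the target rate. A pointwise domination of Fourier multipliers such as $|\mathcal{K}_1^{(\beta)}-\mathcal{G}_1^{(\beta)}|\le Ct|\xi|^2|\mathcal{G}_0^{(\beta)}|+\cdots$ does not transfer the $L^q$--$L^p$ operator bounds of Lemma \ref{Lem:2.2}: the gain $-(\frac1q-\frac1p)$ in \eqref{eq:2.9}--\eqref{eq:2.10} over the heat rate comes from the oscillation of $\cos(\beta|\xi|t)$ and is invisible once you take absolute values. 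The only ways a pointwise bound directly yields an operator bound are $\|m\|_{L^1_\xi}$ for $L^1\to L^\infty$ (or Plancherel for $L^2\to L^2$), and there the borderline first-order correction fails: e.g.\ for $\mathbb{K}_1^{(\beta)}-\mathbb{G}_1^{(\beta)}$ with $\ell=\alpha=0$ the term bounded by $Ct|\xi|^2e^{-ct|\xi|^2}$ has $L^1_\xi$-norm $\sim t\cdot t^{-5/2}=t^{-3/2}$, while the target in \eqref{eq:3.10} is $t^{-2}$; similarly the single-$\sin(t\beta|\xi|\psi)$ term in $\mathcal{K}_{00}^{(\beta)}-\mathcal{G}_0^{(\beta)}$ gives $t^{-2}$ against the target $t^{-5/2}$ of \eqref{eq:3.8}. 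Alternatively, to feed this term into \eqref{eq:2.9} as $G_{0L}^{(\beta)}(t)\ast h_t$ you would need a uniform $L^1_x$ bound on the kernel of the $t$-dependent correction multiplier $\sin(\beta|\xi|\psi t)/(\beta|\xi|\phi_{\nu,\beta})$, which you do not supply and which is not a power of $|\xi|$.

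The paper closes precisely this gap by expanding to second order so that the borderline term carries the \emph{bare} wave phase, $\beta|\xi|t(\phi_{\nu,\beta}-1)\cos(\beta|\xi|t)$ (the terms $A_{\ell,1,3}$ and $A_{\ell,2,2}$), and then applying the dispersive estimate $\|W_0^{(\beta)}(t)h\|_\infty\le Ct^{-1}\|h\|_{\dot W^{2,1}}$ of Lemma \ref{Lem:2.5} together with \eqref{eq:2.19}: the explicit factor $t$ cancels against $t^{-1}$ and the two extra derivatives combine with $(\phi_{\nu,\beta}-1)=O(|\xi|^2)$ to produce the missing half power. Your proposal never invokes Lemma \ref{Lem:2.5} or \eqref{eq:2.16}, so this mechanism is absent. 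A second omission: Proposition \ref{Prop:3.1} covers all $1\le q\le p\le\infty$, and the paper obtains this by proving the $L^1\to L^\infty$ case (Lemma \ref{Lem:3.2}) and the $L^p\to L^p$ case (Lemma \ref{Lem:3.3}) separately and interpolating by Riesz--Thorin; the $L^p\to L^p$ case requires uniform $L^1_x$ bounds on the correction kernels via the Carlson--Beurling inequality \eqref{eq:2.20} and the $\xi$-derivative estimates \eqref{eq:3.31}--\eqref{eq:3.37}. Your proposal addresses neither the interpolation structure nor these kernel estimates, so as written it establishes at best the strictly-faster-decaying terms and not the full statement.
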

Proposition \ref{Prop:3.1} follows from Lemmas \ref{Lem:3.2} and \ref{Lem:3.3} below 
by the virtue of the Riesz-Thorin interpolation theorem (cf. \cite{B}).
\begin{lem} \label{Lem:3.2}
	Let $\alpha\ge \tilde{\alpha} \ge 0$, $\ell \ge \tilde{\ell} \ge 0$, $m \ge 0$ and $t \ge 0$.
	Then it holds that 
	\begin{equation} \label{eq:3.8}
		\begin{split}
			& \left\| \nabla^{\alpha} \left(
			\partial^{\ell}_{t} \mathbb{K}_{00}^{(\beta)}(t) \ast g -(-1)^{\frac{\ell}{2}} \beta^{\ell} \nabla^{\ell} \mathbb{G}_{0}^{(\beta)}(t) \ast g  
			\right)
			\right\|_{\infty} \le C(1+t)^{-\frac{5}{2}- \frac{\ell-\tilde{\ell}+ \alpha-\tilde{\alpha}}{2}} \| \nabla^{\tilde{\alpha}+\tilde{\ell}} g \|_{1}
		\end{split}
	\end{equation}
	for $\ell=2m$,
	\begin{equation} \label{eq:3.9}
		\begin{split}
			& \left\| \nabla^{\alpha} \left(
			\partial^{\ell}_{t} \mathbb{K}_{00}^{(\beta)}(t) \ast g -(-1)^{\frac{\ell+1}{2}} \beta^{\ell+1} \nabla^{\ell+1} \mathbb{G}_{1}^{(\beta)}(t)\ast g 
			\right)
			\right\|_{\infty} \\
			& \le C(1+t)^{-\frac{5}{2}- \frac{\ell-\tilde{\ell}+ \alpha-\tilde{\alpha}}{2}} \| \nabla^{\tilde{\alpha}+\tilde{\ell}} g \|_{1}
		\end{split}
	\end{equation}
	for $\ell=2m+1$,
	\begin{equation} \label{eq:3.10}
		\begin{split}
			& \left\| \nabla^{\alpha} \left(
			\partial^{\ell}_{t} \mathbb{K}_{1}^{(\beta)}(t) \ast g - (-1)^{\frac{\ell}{2}} \beta^{\ell} \nabla^{\ell} \mathbb{G}_{1}^{(\beta)}(t) \ast g
			\right)
			\right\|_{\infty}  \le C(1+t)^{-2- \frac{\ell-\tilde{\ell}+ \alpha-\tilde{\alpha}}{2}} \| \nabla^{\tilde{\alpha}+\tilde{\ell}} g \|_{1}
		\end{split}
	\end{equation}
	for $\ell=2m$ and 
	\begin{equation} \label{eq:3.11}
		\begin{split}
			& \left\| \nabla^{\alpha} \left(
			\partial^{\ell}_{t} \mathbb{K}_{1}^{(\beta)}(t) \ast g - (-1)^{\frac{\ell-1}{2}} \beta^{\ell-1} \nabla^{\ell-1} \mathbb{G}_{0}^{(\beta)}(t) \ast g
			\right)
			\right\|_{\infty} \\
			& \le C(1+t)^{-2- \frac{\ell-\tilde{\ell}+ \alpha-\tilde{\alpha}}{2}} \| \nabla^{\tilde{\alpha}+\tilde{\ell}} g \|_{1}
		\end{split}
	\end{equation}
	for $\ell=2m+1$.
\end{lem}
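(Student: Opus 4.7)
The plan is to exploit the fact that in $\supp\chi_L$, the Taylor expansion $\phi_{\nu,\beta}(\xi) = \sqrt{1 - \nu^2|\xi|^2/(4\beta^2)} = 1 + O(|\xi|^2)$ makes $\tilde\phi(\xi) := \beta|\xi|\phi_{\nu,\beta}(\xi)$ differ from the pure wave frequency $\beta|\xi|$ only by $O(|\xi|^3)$, and to convert this smallness---together with the Gaussian factor $e^{-\nu t|\xi|^2/2}$---into the extra $(1+t)^{-1/2}$ decay gain over the bare $L^1\!\to\!L^\infty$ rates supplied by Lemma~\ref{Lem:2.2}. First I would substitute the explicit formulas of Lemma~\ref{Lem:2.3}, namely $\mathcal{K}_{00}^{(\beta)}\chi_L = e^{-\nu t|\xi|^2/2}\cos(t\tilde\phi)\chi_L$ and $\mathcal{K}_1^{(\beta)}\chi_L = e^{-\nu t|\xi|^2/2}\sin(t\tilde\phi)/\tilde\phi\cdot\chi_L$, and expand $\partial_t^\ell$ by Leibniz to separate ``wave-type'' contributions (a $\tilde\phi^k$-weighted trigonometric function of $t\tilde\phi$) from ``heat-type'' contributions (carrying an extra $|\xi|^{2j}$ factor born of differentiating the Gaussian). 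The prescribed target $(-1)^{\lfloor\ell/2\rfloor}\beta^\ell\nabla^\ell\mathbb{G}_j^{(\beta)}$ has the same structural form with $\beta|\xi|$ in place of $\tilde\phi$, so the difference will be controlled by the phase difference $\tilde\phi - \beta|\xi|$ and its powers.

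Next I would Taylor-expand the phase-difference factors: $\tilde\phi^k-(\beta|\xi|)^k=O(|\xi|^{k+2})$, and, using $\cos A-\cos B = -2\sin\tfrac{A+B}{2}\sin\tfrac{A-B}{2}$ with its sine analogue, $|\cos(t\tilde\phi)-\cos(t\beta|\xi|)|\le C\min(t|\xi|^3,1)$ and similarly for the $\sin/\tilde\phi$ quotient. This decomposes the difference symbol into (a) a handful of \emph{leading oscillatory corrections} of the shape $P(t,|\xi|)\,(\cos\ \text{or}\ \sin)(t\beta|\xi|)\,e^{-\nu t|\xi|^2/2}\chi_L$ with $P$ a polynomial in $t$ and $|\xi|$ of controlled order, plus (b) higher-order \emph{remainders} carrying strictly more $|\xi|$ factors. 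Category (b) is handled by the crude estimate $\|f^\vee\|_\infty \le C\|f\|_{L^1_\xi}$ and the Gaussian moment bound $\int_{\R^3}|\xi|^{2k}e^{-\nu t|\xi|^2/2}\,d\xi\le C(1+t)^{-k-3/2}$; the extra factors of $|\xi|$ produce more than the required number of $(1+t)^{-1/2}$ factors. The Riesz multipliers $\xi_a\xi_b/|\xi|^2$ and the symbol $|\xi|^{\alpha-\tilde\alpha+\ell-\tilde\ell}$ coming from the extra spatial derivatives are both absorbed via the same Gaussian moment identity.

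The main obstacle is category (a): for the oscillatory leading corrections the crude $L^1_\xi$ estimate undershoots the claimed rate by precisely $(1+t)^{-1/2}$. To recover this half-power I would pass to spherical coordinates in $\xi$ and use the radial Fourier identity $\int_{\R^3}e^{ix\cdot\xi}h(|\xi|)\,d\xi = (4\pi/|x|)\int_0^\infty h(\rho)\rho\sin(|x|\rho)\,d\rho$, then reduce each resulting radial integral via the $\cos\sin$ product-to-sum formula to the elementary Gaussian integrals $\int_0^\infty \cos(c\rho)e^{-\nu t\rho^2/2}\,d\rho = C(\nu t)^{-1/2}e^{-c^2/(2\nu t)}$ and their $c$-derivatives, with $c = |x|\pm\beta t$. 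This expresses each leading correction as an explicit superposition of Gaussian shells concentrated on the light cone $|x|=\beta t$, whose $L^\infty_x$ norms, due to the interaction of the $1/|x|$ prefactor with the Gaussian concentration at $|x|\sim \beta t$, gain the missing $(1+t)^{-1/2}$. This is precisely the hyperbolic aspect alluded to in the introduction and is the only place where anything beyond Gaussian scaling and Taylor expansion is needed; once it is in hand, convolution with $\nabla^{\tilde\alpha+\tilde\ell}g$ by Young's inequality $\|(\text{kernel})\ast \nabla^{\tilde\alpha+\tilde\ell}g\|_\infty\le\|\text{kernel}\|_\infty\|\nabla^{\tilde\alpha+\tilde\ell}g\|_1$ delivers \eqref{eq:3.8}--\eqref{eq:3.11}.
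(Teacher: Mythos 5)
Your decomposition is essentially the one the paper uses: expand $\partial_t^\ell$ of $e^{-\nu t|\xi|^2/2}\cos(t\beta|\xi|\phi_{\nu,\beta})$ (resp.\ $\sin(t\beta|\xi|\phi_{\nu,\beta})/(\beta|\xi|\phi_{\nu,\beta})$) by Leibniz, Taylor-expand the phase using $\phi_{\nu,\beta}-1=O(|\xi|^2)$, and kill every remainder carrying extra powers of $|\xi|$ by the crude $\|\check f\|_\infty\le C\|f\|_{L^1_\xi}$ bound plus Gaussian moments (this is exactly how the paper treats its terms $A_{\ell,1,1}$, $A_{\ell,1,2}$, $A_{\ell,2,1}$, $A_{\ell,3}$, $A_{\ell,4}$, using \eqref{eq:3.19}, \eqref{eq:3.21} and \eqref{eq:3.26}). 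Where you diverge is the recovery of the extra $(1+t)^{-1/2}$ for the surviving oscillatory pieces (the paper's $A_{\ell,1,3}$ and $A_{\ell,2,2}$, whose symbols still contain $\cos(\beta|\xi|t)$): you propose the explicit Hoff--Zumbrun-style computation of the diffusion-wave kernel via the 3D radial Fourier identity and the Gaussian integrals with phase $c=|x|\pm\beta t$, whereas the paper simply factors the symbol as $\cos(t\beta|\xi|)\cdot(\text{heat-type kernel})$, i.e.\ writes the term as $t\,W_0^{(\beta)}(t)$ applied to a low-frequency heat-smoothed function, and then combines the classical wave dispersive estimate \eqref{eq:2.14} with the $L^1$ bound \eqref{eq:2.19} on $\nabla^2\mathcal{R}_a\mathcal{R}_b\mathcal{F}^{-1}[e^{-\nu t|\xi|^2/2}\chi_L]$; the two derivatives demanded by \eqref{eq:2.14} are paid for by the $O(|\xi|^2)$ smallness of $\phi_{\nu,\beta}-1$. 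Both mechanisms encode the same light-cone concentration, but the paper's route is shorter and, importantly, sidesteps a point your sketch glosses over: the multiplier $\xi_a\xi_b/|\xi|^2$ is not radial, so your radial Fourier identity does not apply verbatim to the actual kernels $\mathbb{K}_{00}^{(\beta)}$, $\mathbb{K}_1^{(\beta)}$; you would need the spherical-harmonic (Bochner/Hankel) version of the computation, and you cannot instead post-compose with $\mathcal{R}_a\mathcal{R}_b$ since Riesz transforms are not bounded on $L^\infty$. In the paper's argument the Riesz factors sit harmlessly inside the $L^1$ estimate \eqref{eq:2.19}. With that point repaired (as in Hoff--Zumbrun or Kobayashi--Shibata), and a separate trivial bound for $0\le t\le 1$ where the negative powers of $t$ degenerate, your plan goes through.
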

%
\begin{lem} \label{Lem:3.3}
	Let $\alpha\ge \tilde{\alpha} \ge 0$, $\ell \ge \tilde{\ell} \ge 0$, $m \ge 0$, $1 \le p\le \infty$ and $t \ge 0$.
	Then it holds that 
	\begin{equation} \label{eq:3.12}
		\begin{split}
			& \left\| \nabla^{\alpha} \left(
			\partial^{\ell}_{t} \mathbb{K}_{00}^{(\beta)}(t) \ast g -(-1)^{\frac{\ell}{2}} \beta^{\ell} \nabla^{\ell} \mathbb{G}_{0}^{(\beta)}(t) \ast g  
			\right)
			\right\|_{p} \le C(1+t)^{- \frac{\ell-\tilde{\ell}+ \alpha-\tilde{\alpha}}{2}} \| \nabla^{\tilde{\alpha}+\tilde{\ell}} g \|_{p}
		\end{split}
	\end{equation}
	for $\ell=2m$,
	\begin{equation} \label{eq:3.13}
		\begin{split}
			& \left\| \nabla^{\alpha} \left(
			\partial^{\ell}_{t} \mathbb{K}_{00}^{(\beta)}(t) \ast g -(-1)^{\frac{\ell+1}{2}} \beta^{\ell+1} \nabla^{\ell+1} \mathbb{G}_{1}^{(\beta)}(t)\ast g 
			\right)
			\right\|_{p} \\
			& \le C(1+t)^{- \frac{\ell-\tilde{\ell}+ \alpha-\tilde{\alpha}}{2}} \| \nabla^{\tilde{\alpha}+\tilde{\ell}} g \|_{p}
		\end{split}
	\end{equation}
	for $\ell=2m+1$,
	\begin{equation} \label{eq:3.14}
		\begin{split}
			& \left\| \nabla^{\alpha} \left(
			\partial^{\ell}_{t} \mathbb{K}_{1}^{(\beta)}(t) \ast g - (-1)^{\frac{\ell}{2}} \beta^{\ell} \nabla^{\ell} \mathbb{G}_{1}^{(\beta)}(t) \ast g
			\right)
			\right\|_{p} \le C(1+t)^{\frac{1}{2}- \frac{\ell-\tilde{\ell}+ \alpha-\tilde{\alpha}}{2}} \| \nabla^{\tilde{\alpha}+\tilde{\ell}} g \|_{p}
		\end{split}
	\end{equation}
	for $\ell=2m$ and 
	\begin{equation} \label{eq:3.15}
		\begin{split}
			& \left\| \nabla^{\alpha} \left(
			\partial^{\ell}_{t} \mathbb{K}_{1}^{(\beta)}(t) \ast g - (-1)^{\frac{\ell-1}{2}} \beta^{\ell-1} \nabla^{\ell-1} \mathbb{G}_{0}^{(\beta)}(t) \ast g
			\right)
			\right\|_{p} \\
			& \le C(1+t)^{\frac{1}{2}- \frac{\ell-\tilde{\ell}+ \alpha-\tilde{\alpha}}{2}} \| \nabla^{\tilde{\alpha}+\tilde{\ell}} g \|_{p}
		\end{split}
	\end{equation}
	for $\ell=2m+1$.
\end{lem}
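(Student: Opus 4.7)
The plan is to reduce Lemma \ref{Lem:3.3} to an $L^{1}(\R^{3})$ estimate on the convolution kernel of the difference of Fourier multipliers appearing on the left-hand side; Young's convolution inequality then delivers the $L^{p}$-$L^{p}$ bound uniformly for $1\le p\le\infty$. Starting from the low-frequency representation in Lemma \ref{Lem:2.3}, I write
\begin{equation*}
\mathcal{K}_{00}^{(\beta)}(t,\xi)\chi_{L}=e^{-\nu|\xi|^{2}t/2}\cos(t\beta|\xi|\phi_{\nu,\beta})\chi_{L},\qquad \mathcal{K}_{1}^{(\beta)}(t,\xi)\chi_{L}=e^{-\nu|\xi|^{2}t/2}\frac{\sin(t\beta|\xi|\phi_{\nu,\beta})}{\beta|\xi|\phi_{\nu,\beta}}\chi_{L},
\end{equation*}
and obtain the symbols of $\mathcal{G}_{j}^{(\beta)}\chi_{L}$ by replacing $\phi_{\nu,\beta}$ with $1$. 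On $\supp\chi_{L}$ with $c_{0}$ small, the perturbation $\psi(\xi):=\phi_{\nu,\beta}(\xi)-1$ is smooth and satisfies $|\partial_{\xi}^{\gamma}\psi(\xi)|\le C_{\gamma}|\xi|^{\max(2-|\gamma|,0)}$, so $\psi$ carries two spare powers of $|\xi|$ compared to $1$.

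Next, I apply Leibniz's rule to $\partial_{t}^{\ell}$ acting on the above symbols: derivatives of $e^{-\nu|\xi|^{2}t/2}$ produce factors $-\nu|\xi|^{2}/2$, while derivatives of the oscillatory factor produce $\pm\beta|\xi|\phi_{\nu,\beta}$. Using the addition formulas $\cos(A+B)=\cos A\cos B-\sin A\sin B$ and $\sin(A+B)=\sin A\cos B+\cos A\sin B$ with $A=t\beta|\xi|$, $B=t\beta|\xi|\psi(\xi)$, together with $\cos B=1+O(B^{2})$ and $\sin B=B+O(B^{3})$, I isolate the term of total order $|\xi|^{\ell}$ (respectively $|\xi|^{\ell\pm 1}$) carried by the dominant contribution. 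This dominant piece coincides exactly, modulo the built-in Riesz multiplier $-\xi_{a}\xi_{b}/|\xi|^{2}$, with the multiplier of the asserted approximation in \eqref{eq:3.12}-\eqref{eq:3.15}, including the signs $(-1)^{\ell/2}$ or $(-1)^{(\ell\pm 1)/2}$. Everything else is collected in a remainder of the schematic form
\begin{equation*}
r(t,\xi)=e^{-\nu|\xi|^{2}t/2}\chi_{L}(\xi)\sum_{j}t^{k_{j}}|\xi|^{n_{j}}q_{j}(\xi)s_{j}(t\beta|\xi|),
\end{equation*}
where $q_{j}\in C^{\infty}(\supp\chi_{L})$, each $s_{j}$ is a bounded oscillatory factor (cosine, sine, or $\sin(x)/x$), and the crucial bookkeeping yields $n_{j}-k_{j}\ge \alpha-\tilde{\alpha}+\ell-\tilde{\ell}$, shifted by $-1$ in the $\mathbb{K}_{1}$ case because of the $(\beta|\xi|\phi_{\nu,\beta})^{-1}$ prefactor.

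To estimate the $L^{1}(\R^{3})$ norm of the inverse Fourier transform of $(i\xi)^{\alpha}\xi_{a}\xi_{b}|\xi|^{-2}r(t,\xi)$ (which acts as a convolution kernel against $\nabla^{\tilde{\alpha}+\tilde{\ell}}g$), I invoke the rescaling argument behind \eqref{eq:2.19}: the substitution $\xi=\eta/\sqrt{1+t}$ converts the Gaussian into a fixed Schwartz function in $\eta$, each factor $|\xi|^{n}$ contributes $(1+t)^{-n/2}$ to the kernel's $L^{1}$ norm, and the $t^{k_{j}}$ prefactors survive. Summing produces the exponent $-(\ell-\tilde{\ell}+\alpha-\tilde{\alpha})/2$ for \eqref{eq:3.12}-\eqref{eq:3.13} and $\tfrac{1}{2}-(\ell-\tilde{\ell}+\alpha-\tilde{\alpha})/2$ for \eqref{eq:3.14}-\eqref{eq:3.15}, precisely as claimed; Young's inequality then completes the proof. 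The main obstacle will be the bookkeeping at this Leibniz-plus-Taylor step: in each of the four cases (even or odd $\ell$, $\mathbb{K}_{00}^{(\beta)}$ or $\mathbb{K}_{1}^{(\beta)}$) one must verify that the specified leading term with the correct sign cancels \emph{all} contributions of the target order in $|\xi|$, so that every surviving remainder carries enough spare powers of $|\xi|$ to absorb the $t^{k_{j}}$ generated by Leibniz, as well as the extra $|\xi|^{-1}$ in the $\mathbb{K}_{1}^{(\beta)}$ case.
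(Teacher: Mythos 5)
Your overall skeleton (identify the leading multiplier, collect a remainder symbol, bound the $L^{1}$ norm of its inverse Fourier transform, finish with Young's inequality) matches the paper's strategy, and your algebraic identification of the leading term via Leibniz plus Taylor expansion is essentially the paper's decomposition \eqref{eq:3.16}--\eqref{eq:3.17}. The gap is in the key analytic step: the rescaling $\xi=\eta/\sqrt{1+t}$ ``behind \eqref{eq:2.19}'' does \emph{not} give an $L^{1}$ kernel bound for your remainder symbols, because they still contain the oscillatory factors $s_{j}(t\beta|\xi|)$. After rescaling, $\cos(t\beta|\xi|)$ becomes $\cos(\beta\sqrt{t}\,|\eta|\cdot\sqrt{t/(1+t)})$, whose $\eta$-derivatives grow like powers of $\sqrt{t}$, so the rescaled symbol is not a fixed Schwartz function and the claim ``each factor $|\xi|^{n}$ contributes $(1+t)^{-n/2}$ to the kernel's $L^{1}$ norm'' fails. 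This is not a technicality: the diffusion-wave kernels themselves are not uniformly in $L^{1}$ (the paper records $\|G_{0L}^{(\beta)}(t)\|_{1}\le C(1+t)^{1/2}$ and $\|G_{1L}^{(\beta)}(t)\|_{1}\le Ct$ in the proof of Corollary \ref{cor:7.2}), which is exactly the Hoff--Zumbrun phenomenon the paper is built around. The estimate \eqref{eq:2.19} applies only to the purely parabolic multiplier $e^{-\nu t|\xi|^{2}/2}\chi_{L}$ with Riesz factors, with no oscillation.

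The paper closes this gap in two different ways, and your proof needs one of them. For the generic remainder pieces it uses the Carlson--Beurling-type inequality \eqref{eq:2.20}, $\|\mathcal{F}^{-1}[h]\|_{1}\le C\|h\|_{2}^{1/4}\|\nabla_{\xi}^{2}h\|_{2}^{3/4}$, together with the pointwise derivative bounds \eqref{eq:3.31}--\eqref{eq:3.37}; the $1/4$--$3/4$ weighting is tuned so that the $t^{2}$ loss from $\nabla_{\xi}^{2}$ hitting the oscillation (see \eqref{eq:3.32}) is absorbed, yielding \eqref{eq:3.41}--\eqref{eq:3.42}. For the term $A_{\ell,1,3}$ carrying an explicit factor $t\beta|\xi|(\phi_{\nu,\beta}-1)\cos(\beta|\xi|t)$, it instead factors out the wave propagator $W_{0}^{(\beta)}(t)$, uses its $L^{p}$ boundedness \eqref{eq:2.16} (at the cost of a factor of $t$), and applies \eqref{eq:2.19} only to the remaining non-oscillatory parabolic symbol; this is where the allowed growth $(1+t)^{1/2}$ in \eqref{eq:3.14}--\eqref{eq:3.15} comes from. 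Without one of these devices your $L^{1}$ kernel estimate, and hence the whole reduction, does not go through.
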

\begin{proof}[Proof of Lemma \ref{Lem:3.2}]
	We only prove the estimate \eqref{eq:3.10} under the assumption on $\ell=2m$, $m \ge 1$.
	The same argument works for \eqref{eq:3.9}, \eqref{eq:3.11} and \eqref{eq:3.12}. 
	
	We decompose $\partial_{t}^{\ell} \mathbb{K}_{1}^{(\beta)}(t) \ast g$ into 4 parts;
	\begin{equation} \label{eq:3.16}
		\begin{split}
			\partial^{\ell}_{t} \mathbb{K}_{1L}^{(\beta)}(t) \ast g
			= 
			A_{\ell, 1} +A_{\ell,2}+A_{\ell, 3}+A_{\ell, 4},
		\end{split}
	\end{equation}
	where 
	\begin{equation*}
		\begin{split}
			A_{\ell, 1} & := \mathcal{F}^{-1} \left[
			e^{-\frac{\nu |\xi|^{2} t}{2}} (\beta |\xi| \phi_{\nu,\beta})^{\ell-1} \sin (\beta |\xi| \phi_{\nu,\beta} t) \frac{\xi_{a}\xi_{b}}{|\xi|^{2}} \chi_{L} \hat{g} 
			\right], \\
			A_{\ell, 2} & := \ell \mathcal{F}^{-1} \left[
			e^{-\frac{\nu |\xi|^{2} t}{2}} \left(
			-\frac{\nu |\xi|^{2}}{2}
			\right)
			(\beta |\xi| \phi_{\nu,\beta})^{\ell-2} (-1)^{\frac{\ell-2}{2}} \cos (\beta |\xi| \phi_{\nu,\beta} t) \frac{\xi_{a}\xi_{b}}{|\xi|^{2}} \chi_{L} \hat{g} 
			\right], 
		\end{split}
	\end{equation*}
	\begin{equation*}
		\begin{split}
			& A_{\ell, 3}:= \\
			& \sum_{ \substack{ 0 \le j \le \ell-2 \\ j:even } } \binom{\ell}{j} \mathcal{F}^{-1} \left[
			e^{-\frac{\nu |\xi|^{2} t}{2}}
			\left(
			-\frac{\nu |\xi|^{2}}{2}
			\right)^{\ell-j}
			(\beta |\xi| \phi_{\nu,\beta})^{j-1}(-1)^{\frac{j}{2}} \sin (\beta |\xi| \phi_{\nu,\beta} t) \frac{\xi_{a}\xi_{b}}{|\xi|^{2}} \chi_{L} \hat{g} 
			\right], \\
			& A_{\ell, 4}:= \\
			& \sum_{ \substack{1 \le j \le \ell-3 \\ j:odd } }\binom{\ell}{j} \mathcal{F}^{-1} \left[
			e^{-\frac{\nu |\xi|^{2} t}{2}}
			\left(
			-\frac{\nu |\xi|^{2}}{2}
			\right)^{\ell-j}
			(\beta |\xi| \phi_{\nu,\beta})^{j-1}(-1)^{\frac{j-1}{2}} \cos (\beta |\xi| \phi_{\nu,\beta} t) \frac{\xi_{a}\xi_{b}}{|\xi|^{2}} \chi_{L} \hat{g} 
			\right],
		\end{split}
	\end{equation*}
	where $\binom{\ell}{j}:=\frac{\ell !}{j!(\ell-j)!}$.
	We firstly deal with $A_{\ell, 1}$. 
	To do this, we observe that 
	\begin{equation} \label{eq:3.17}
		\begin{split}
			A_{\ell,1}-(-1)^{\frac{\ell}{2}} \beta^{\ell} \nabla^{\ell} \mathbb{G}_{1}^{(\beta)}(t) \ast g
			=A_{\ell,1,1}+A_{\ell,1,2}+A_{\ell,1,3},
		\end{split}
	\end{equation}
	where
	\begin{equation*}
		\begin{split}
			& A_{\ell,1,1} := 
			\mathcal{F}^{-1} \left[
			e^{-\frac{\nu |\xi|^{2} t}{2}} 
			(\beta |\xi|)^{\ell-1}
			(\phi_{\nu,\beta}^{\ell-1} -1) \sin (\beta |\xi| \phi_{\nu,\beta} t) \frac{\xi_{a}\xi_{b}}{|\xi|^{2}} \chi_{L} \hat{g} 
			\right], \\
			& A_{\ell,1,2} := \\
			& \mathcal{F}^{-1} \left[
			e^{-\frac{\nu |\xi|^{2} t}{2}} (\beta |\xi|)^{\ell-1} \left( \sin (\beta |\xi| \phi_{\nu,\beta} t)- \sin (\beta |\xi| t)- \beta |\xi| t(\phi_{\nu,\beta}-1)\cos (\beta |\xi| t)\right) \frac{\xi_{a}\xi_{b}}{|\xi|^{2}} \chi_{L} \hat{g} 
			\right], \\
			& A_{\ell,1,3} := 
		    \mathcal{F}^{-1} \left[
			e^{-\frac{\nu |\xi|^{2} t}{2}} (\beta |\xi|)^{\ell-1} \beta |\xi| t(\phi_{\nu,\beta}-1)\cos (\beta |\xi| t) \frac{\xi_{a}\xi_{b}}{|\xi|^{2}} \chi_{L} \hat{g} 
			\right].
		\end{split}
	\end{equation*}
Then noting that $\phi_{\nu,\beta}^{-1} -1=-\phi_{\nu,\beta}^{-1}(\phi_{\nu,\beta} -1)$,
\begin{equation} \label{eq:3.18}
	\phi_{\nu,\beta} -1 =O(|\xi|^{2})
\end{equation}
and
\begin{equation} \label{eq:3.19}
	\phi_{\nu,\beta}^{\ell-1} -1=(\phi_{\nu,\beta} -1)(1+\phi_{\nu,\beta}+ \cdots +\phi_{\nu,\beta}^{\ell-2}) =O(|\xi|^{2})
\end{equation}
for $\ell \ge 2$ as $|\xi| \to 0$, 
we have
\begin{equation} \label{eq:3.20}
	\begin{split}
		\| \nabla^{\alpha}A_{\ell,1,1} \|_{\infty} \le C \left\|
		e^{-c(1+t)|\xi|^{2}} |\xi|^{\ell+\alpha+1} \chi_{L} \hat{g} 
		\right\|_{1} 
		\le C (1+t)^{-2-\frac{\ell-\tilde{\ell}}{2}-\frac{\alpha-\tilde{\alpha}}{2}}
		\| \nabla^{\tilde{\alpha}+\tilde{\ell}} g \|_{1}.
	\end{split}
\end{equation}
For the estimate of $A_{\ell,1,2}$,
we apply the fact that 
\begin{align} \label{eq:3.21}
	& |\sin (\beta |\xi| \phi_{\nu,\beta} t)- \sin (\beta |\xi| t)- \beta |\xi| t(\phi_{\nu,\beta}-1)\cos (\beta |\xi| t)| \le C (t |\xi|^{3})^{2}=Ct^{2}|\xi|^{6}
\end{align}
to see that
\begin{equation} \label{eq:3.22}
	\begin{split}
	 \| \nabla^{\alpha} A_{\ell,1,2} \|_{\infty} 
		%
		%
		\le C t^{2} \left\|
		e^{-c(1+t)|\xi|^{2}} |\xi|^{\alpha+\ell+5} \chi_{L} \hat{g} 
		\right\|_{1} \le C (1+t)^{-2-\frac{\ell-\tilde{\ell}}{2}-\frac{\alpha-\tilde{\alpha}}{2}}
		\| \nabla^{\tilde{\alpha}+\tilde{\ell}} g \|_{1}.
	\end{split}
\end{equation}
We now show the estimate for $A_{\ell,1,3}$.
When $t \ge 1$, we see that
\begin{equation*} 
	\begin{split}
		\| \nabla^{\alpha} A_{\ell,1,3} \|_{\infty} 
		& \le Ct \left\| W_{0}^{(\beta)} (t)
		\mathcal{F}^{-1} \left[
		\xi^{\alpha} e^{-\frac{\nu |\xi|^{2} t}{2}} |\xi|^{\ell} (\phi_{\nu,\beta}-1) \frac{\xi_{a}\xi_{b}}{|\xi|^{2}} \chi_{L} \hat{g} 
		\right] \right\|_{\infty} \\
		& \le C \left\| \mathcal{F}^{-1} \left[\xi^{\alpha} 
		e^{-\frac{\nu |\xi|^{2} t}{2}} |\xi|^{\ell+4} \frac{1}{\phi_{\nu,\beta}+1} \frac{\xi_{a}\xi_{b}}{|\xi|^{2}} \chi_{L} \hat{g} 
		\right] \right\|_{1} \\
		& \le C t^{-2-\frac{\ell-\tilde{\ell}}{2}-\frac{\alpha-\tilde{\alpha}}{2}}
		\| \nabla^{\tilde{\alpha}+\tilde{\ell}} g \|_{1}
	\end{split}
\end{equation*}
by \eqref{eq:3.18}, \eqref{eq:2.14} and \eqref{eq:2.19}.
On the other hand, when $0 \le t \le 1$, we easily have
\begin{equation*}
	\begin{split}
		\| \nabla^{\alpha} A_{\ell,1,3} \|_{\infty} \le C \left\| |\xi|^{\alpha +\ell}
		 \chi_{L} \hat{g} 
		\right\|_{1} \le C 
		\| \nabla^{\tilde{\alpha}+\tilde{\ell}} g \|_{1}.
	\end{split}
\end{equation*}
Thus we obtain 
\begin{equation} \label{eq:3.23}
	\begin{split}
		\| \nabla^{\alpha} A_{\ell,1,3} \|_{\infty} 
		\le C (1+t)^{-2-\frac{\ell-\tilde{\ell}}{2}-\frac{\alpha-\tilde{\alpha}}{2}}
		\| \nabla^{\tilde{\alpha}+\tilde{\ell}} g \|_{1}.
	\end{split}
\end{equation}
Combining \eqref{eq:3.17}, \eqref{eq:3.20}, \eqref{eq:3.22} and \eqref{eq:3.23}, we conclude that
\begin{equation} \label{eq:3.24}
	\begin{split}
		\| \nabla^{\alpha} A_{\ell,1} \|_{\infty} 
		\le C (1+t)^{-2-\frac{\ell-\tilde{\ell}}{2}-\frac{\alpha-\tilde{\alpha}}{2}}
		\| \nabla^{\tilde{\alpha}+\tilde{\ell}} g \|_{1}.
	\end{split}
\end{equation}
	Next, we prove the estimate for $A_{\ell,2}$.
	To this end, we decompose $A_{\ell,2}$ into two parts;
	\begin{equation} \label{eq:3.25}
		\begin{split}
			A_{\ell, 2} = A_{\ell, 2,1}+A_{\ell, 2,2},
		\end{split}
	\end{equation}
where 
\begin{equation*}
	\begin{split}
		& A_{\ell, 2,1} := \\
		& \ell \mathcal{F}^{-1} \left[
		e^{-\frac{\nu |\xi|^{2} t}{2}} \left(
		-\frac{\nu |\xi|^{2}}{2}
		\right)
		(\beta |\xi| \phi_{\nu,\beta})^{\ell-2} (-1)^{\frac{\ell-2}{2}} (\cos (\beta |\xi| \phi_{\nu,\beta} t) -\cos (\beta |\xi| t))\frac{\xi_{a}\xi_{b}}{|\xi|^{2}} \chi_{L} \hat{g} 
		\right]
	\end{split}
\end{equation*}
\begin{equation*}
	\begin{split}
		A_{\ell, 2,2} := \ell \mathcal{F}^{-1} \left[
		e^{-\frac{\nu |\xi|^{2} t}{2}} \left(
		-\frac{\nu |\xi|^{2}}{2}
		\right)
		(\beta |\xi| \phi_{\nu,\beta})^{\ell-2} (-1)^{\frac{\ell-2}{2}}\cos (\beta |\xi| t) \frac{\xi_{a}\xi_{b}}{|\xi|^{2}} \chi_{L} \hat{g} 
		\right].
	\end{split}
\end{equation*}
We show the estimate for $A_{\ell,2,1}$.
Observing that 
\begin{align} \label{eq:3.26}
	|\cos (t \beta |\xi| \phi_{\nu,\beta})-\cos (t \beta |\xi| )| \le C t |\xi|^{3},
\end{align}
we see
\begin{equation} \label{eq:3.27}
	\begin{split}
		\| \nabla^{\alpha} A_{\ell, 2,1} \|_{\infty} \le C t \left\| 
		e^{-C(1+t)|\xi|^{2}} |\xi|^{\ell+\alpha+3}  \chi_{L} \hat{g} 
		\right\|_{1} \le C(1+t)^{-2-\frac{\ell-\tilde{\ell}}{2}-\frac{\alpha-\tilde{\alpha}}{2}} \| \nabla^{\tilde{\alpha}+\tilde{\ell}} g\|_{1}.
	\end{split}
\end{equation}
For $A_{\ell,2,2}$, 
as in the proof of \eqref{eq:3.23}, we apply the estimates \eqref{eq:2.14} and \eqref{eq:2.19} to have 
\begin{equation*}
	\begin{split}
		\| \nabla^{\alpha} A_{\ell, 2,2} \|_{\infty} 
		& \le Ct^{-2-\frac{\ell-\tilde{\ell}}{2}-\frac{\alpha-\tilde{\alpha}}{2}} \| \nabla^{\tilde{\alpha}+\tilde{\ell}} g\|_{1} 
	\end{split}
\end{equation*}
for $t \ge 1$.
When $0 \le t \le 1$, a direct calculation gives
\begin{equation*}
	\begin{split}
		\| \nabla^{\alpha} A_{\ell, 2,2} \|_{\infty} 
		\le  C\left\| 
		|\xi|^{\alpha+\ell} e^{-c(1+t)|\xi|^{2}}  \chi_{L} \hat{g} 
		\right\|_{1} \le C \| \nabla^{\tilde{\alpha}+\tilde{\ell}} g\|_{1}. 
	\end{split}
\end{equation*}
These estimates show 
\begin{equation} \label{eq:3.28}
	\begin{split}
		\| \nabla^{\alpha} A_{\ell, 2,2} \|_{\infty} 
		\le C (1+t)^{-2-\frac{\ell-\tilde{\ell}}{2}-\frac{\alpha-\tilde{\alpha}}{2}} \| \nabla^{\tilde{\alpha}+\tilde{\ell}} g\|_{1}
	\end{split}
\end{equation}
	for $t \ge 0$.
	Therefore we get 
	\begin{equation} \label{eq:3.29}
		\begin{split}
			\| \nabla^{\alpha} A_{\ell, 2} \|_{\infty} 
			\le C (1+t)^{-2-\frac{\ell-\tilde{\ell}}{2}-\frac{\alpha-\tilde{\alpha}}{2}} \| \nabla^{\tilde{\alpha}+\tilde{\ell}} g\|_{1}
		\end{split}
	\end{equation}
by \eqref{eq:3.25}, \eqref{eq:3.27} and \eqref{eq:3.28}.
	The estimates for $A_{\ell, 3}$ and $A_{\ell, 4}$ are easily shown, since they are the remainder factors. 
	More precisely, we have 
	\begin{equation} \label{eq:3.30}
		\begin{split}
			\| \nabla^{\alpha} A_{\ell, 3} \|_{\infty}+\| \nabla^{\alpha} A_{\ell, 4} \|_{\infty} 
			\le C (1+t)^{-2-\frac{\ell-\tilde{\ell}}{2}-\frac{\alpha-\tilde{\alpha}}{2}}
			\| \nabla^{\tilde{\alpha}+\tilde{\ell}} g \|_{1}.
		\end{split}
	\end{equation}
Summing up \eqref{eq:3.16} with the estimates \eqref{eq:3.24}, \eqref{eq:3.29} and \eqref{eq:3.30}, we obtain the desired estimate \eqref{eq:3.10} for $\ell=2m$, $m \ge 1$.
We complete the proof of Lemma \ref{Lem:3.2}.
\end{proof}
%
\begin{proof}[Proof of Lemma \ref{Lem:3.3}]
	We only prove the estimate \eqref{eq:3.14} for $\ell=2m$ with $m \ge 1$.
	The same argument works for \eqref{eq:3.12}, \eqref{eq:3.13} and \eqref{eq:3.15}.  
	
	We begin with the point-wise estimates in the Fourier space.  
	By direct calculations, we have
	\begin{align}
		& |\nabla_{\xi} \cos (t \beta |\xi| \phi_{\nu,\beta})|  +|\nabla_{\xi} \sin (t \beta |\xi| \phi_{\nu,\beta})| \le Ct, \label{eq:3.31} \\
		& |\nabla_{\xi}^{2} \cos (t \beta |\xi| \phi_{\nu,\beta})|+ |\nabla_{\xi}^{2} \sin (t \beta |\xi| \phi_{\nu,\beta})| \le C (t^{2}+t |\xi|^{-1}), \label{eq:3.32}\\
		& |\nabla_{\xi} (\cos (t \beta |\xi| \phi_{\nu,\beta})-\cos (t \beta |\xi| ) )| \le C (t^{2} |\xi|^{3} +t |\xi|^{2} ), \label{eq:3.33}\\
		& |\nabla_{\xi}^{2} (\cos (t \beta |\xi| \phi_{\nu,\beta})-\cos (t \beta |\xi| ) )| \le C (t^{2} |\xi|^{2} +t|\xi| ), \label{eq:3.34}
	\end{align}
	\begin{equation}
		\begin{split}
			& |\nabla_{\xi} \{ \sin (\beta |\xi| \phi_{\nu,\beta} t)- \sin (\beta |\xi| t)- \beta |\xi| t(\phi_{\nu,\beta}-1)\cos (\beta |\xi| t) \}| \\
			&\le C (t^{3}|\xi|^{6} +t^{2} |\xi|^{5}+t|\xi|^{2} ), \label{eq:3.35}
		\end{split}
	\end{equation}
	\begin{equation}
		\begin{split}
			& |\nabla_{\xi}^{2} \{ \sin (\beta |\xi| \phi_{\nu,\beta} t)- \sin (\beta |\xi| t)- \beta |\xi| t(\phi_{\nu,\beta}-1)\cos (\beta |\xi| t) \}| \\
			&\le C (t^{4}|\xi|^{6} +t^{2}|\xi|^{2} +t |\xi|) \label{eq:3.36}
		\end{split}
	\end{equation}
	and
	\begin{align} \label{eq:3.37}
		\left| \nabla_{\xi}^{k} ( 
		e^{-\frac{\nu |\xi|^{2} t}{2}} O(|\xi|^{\gamma}))
		\right| \le C e^{-c(1+t)|\xi|^{2}} |\xi|^{\gamma-k}
	\end{align}
	for $k \in \mathbb{N}$ on $\supp \chi_{L}$. 
	The proof of the estimates \eqref{eq:3.31}-\eqref{eq:3.37} is straightforward. For the completeness, we will show the estimate \eqref{eq:3.36} in the appendix.
We are now in a position to show the estimate \eqref{eq:3.14}. 
Recalling the decomposition \eqref{eq:3.16}, we estimate $A_{\ell,j}$ ($j=1,2,3,4$) separately. 
Here, we use \eqref{eq:3.17} to have 
\begin{equation} \label{eq:3.38}
	\begin{split}
		& \| \nabla^{\alpha} (A_{\ell,1}-(-1)^{\frac{\ell}{2}} \beta^{\ell} \nabla^{\ell} \mathbb{G}_{1}^{(\beta)}(t) \ast g) \|_{p} \\
		& \le \| \nabla^{\alpha}  A_{\ell,1,1} \|_{p} +\| \nabla^{\alpha}  A_{\ell,1,2} \|_{p}+\| \nabla^{\alpha}  A_{\ell,1,3} \|_{p} \\
		& \le ( \| \mathcal{F}^{-1} [ \chi_{L} \mathcal{A}_{\ell,1,1} ]\|_{1} 
		+\| \mathcal{F}^{-1} [\chi_{L} \mathcal{A}_{\ell,1,2} ]\|_{1}) \| \nabla^{\tilde{\alpha}+\tilde{\ell}} g \|_{p} +\| \nabla^{\alpha}  A_{\ell,1,3} \|_{p}, 
	\end{split}
\end{equation}
where 
	\begin{equation*}
	\begin{split}
		& \mathcal{A}_{\ell,1,1} := \xi^{\alpha-\tilde{\alpha}}
		e^{-\frac{\nu |\xi|^{2} t}{2}} 
		(\beta |\xi|)^{\ell-\tilde{\ell}-1}
		(\phi_{\nu,\beta}^{\ell-1} -1) \sin (\beta |\xi| \phi_{\nu,\beta} t) \frac{\xi_{a}\xi_{b}}{|\xi|^{2}},
	\end{split}
\end{equation*}
	\begin{equation*}
	\begin{split}
		& \mathcal{A}_{\ell,1,2} \\
		& := \xi^{\alpha-\tilde{\alpha}}
		e^{-\frac{\nu |\xi|^{2} t}{2}} (\beta |\xi|)^{\ell-\tilde{\ell}-1} \left( \sin (\beta |\xi| \phi_{\nu,\beta} t)- \sin (\beta |\xi| t)- \beta |\xi| t(\phi_{\nu,\beta}-1)\cos (\beta |\xi| t)\right) \frac{\xi_{a}\xi_{b}}{|\xi|^{2}}.
	\end{split}
\end{equation*}
It is easy to see that 
\begin{equation} \label{eq:3.39}
	\begin{split}
		& \| \chi_{L} \mathcal{A}_{\ell,1,1}  \|_{2} \le C \|
		e^{-c(1+t) |\xi|^{2}} 
		\beta |\xi|^{\alpha-\tilde{\alpha}+\ell-\tilde{\ell}+1} \chi_{L}  \|_{2}
		\le C (1+t)^{-\frac{5}{4}- \frac{\ell-\tilde{\ell}+ \alpha-\tilde{\alpha}}{2}}
	\end{split}
\end{equation}
by \eqref{eq:3.19}.
On the other hand, 
a direct calculation shows 
\begin{equation*}
	\begin{split}
		\nabla_{\xi}^{2} \mathcal{A}_{\ell,1,1} & = \nabla_{\xi}^{2} 
		\left( 
		e^{-\frac{\nu |\xi|^{2} t}{2}}
		O(|\xi|^{\alpha-\tilde{\alpha}+\ell-\tilde{\ell}+1}) 
		\right) \sin (\beta |\xi| \phi_{\nu,\beta} t) \\
		& +2\nabla_{\xi}
		\left( 
		e^{-\frac{\nu |\xi|^{2} t}{2}}
		O(|\xi|^{\alpha-\tilde{\alpha}+\ell-\tilde{\ell}+1}) 
		\right) \nabla_{\xi}\sin (\beta |\xi| \phi_{\nu,\beta} t) \\
		&
		+
		\left( 
		e^{-\frac{\nu |\xi|^{2} t}{2}}
		O(|\xi|^{\alpha-\tilde{\alpha}+\ell-\tilde{\ell}+1}) 
		\right) \nabla_{\xi}^{2}\sin (\beta |\xi| \phi_{\nu,\beta} t). 
	\end{split}
\end{equation*}
Thus, 
noting that $\supp \nabla_{\xi} \chi_{L} \cup \supp \nabla_{\xi}^{2} \chi_{L}$ does not include the neighborhood of $\xi=0$,
we have  
\begin{equation} \label{eq:3.40}
	\begin{split}
		\| \nabla_{\xi}^{2} (\chi_{L} \mathcal{A}_{\ell,1,1})  \|_{2} & \le 
		C \| \chi_{L} \nabla_{\xi}^{2} \mathcal{A}_{\ell,1,1} \|_{2} +C e^{-ct} \\
		& \le C (1+t)
		 \|
		e^{-c(1+t) |\xi|^{2}} |\xi|^{\alpha-\tilde{\alpha}+\ell-\tilde{\ell}-1} \chi_{L}  \|_{2}+C e^{-ct} \\
		& \le C (1+t)^{\frac{3}{4}- \frac{\ell-\tilde{\ell}+ \alpha-\tilde{\alpha}}{2}}
	\end{split}
\end{equation}
by \eqref{eq:3.18}, \eqref{eq:3.31}, \eqref{eq:3.32} and \eqref{eq:3.37}.
Using the estimate \eqref{eq:2.20}, \eqref{eq:3.39} and \eqref{eq:3.40}, 
we obtain
\begin{equation} \label{eq:3.41}
	\begin{split}
		\| \mathcal{F}^{-1} [\chi_{L} \mathcal{A}_{\ell,1,1} ]  \|_{1} & \le 
		C\| \chi_{L} \mathcal{A}_{\ell,1,1}  \|_{2}^{\frac{1}{4}} \| \nabla_{\xi}^{2} (\chi_{L} \mathcal{A}_{\ell,1,1})  \|_{2}^{\frac{3}{4}} \\
		& \le 
		C \| \chi_{L} \mathcal{A}_{\ell,1,1}  \|_{2}^{\frac{1}{4}} \| \chi_{L} \nabla_{\xi}^{2} \mathcal{A}_{\ell,1,1} \|_{2}^{\frac{3}{4}} +C e^{-ct} \\
		& \le C (1+t)^{\frac{1}{4}- \frac{\ell-\tilde{\ell}+ \alpha-\tilde{\alpha}}{2}}.
	\end{split}
\end{equation}
%
By a similar argument, we easily have the estimate for  $\| \mathcal{F}^{-1} [\chi_{L} \mathcal{A}_{\ell,1,2} ]  \|_{1}$ as 
\begin{equation} \label{eq:3.42}
	\begin{split}
		\| \mathcal{F}^{-1} [\chi_{L} \mathcal{A}_{\ell,1,2} ]  \|_{1} \le C (1+t)^{\frac{1}{4}- \frac{\ell-\tilde{\ell}+ \alpha-\tilde{\alpha}}{2}}.
	\end{split}
\end{equation}
%
To show the estimate for $\| \nabla^{\alpha}  A_{\ell,1,3} \|_{p}$, we apply the estimates \eqref{eq:3.18}, \eqref{eq:2.16} and \eqref{eq:2.19} to have
\begin{equation} \label{eq:3.43}
	\begin{split}
		 \| \nabla^{\alpha}  A_{\ell,1,3} \|_{p} & \le C t \left\| W^{(\beta)}_{0}(t) \nabla^{\alpha+\ell-(\tilde{\alpha}+\tilde{\ell})+2} e^{\frac{\nu t \Delta}{2}} \mathcal{R}_{a} \mathcal{R}_{b}
		 \mathcal{F}^{-1} \left[
		 \frac{\chi_{L}}{1+\phi_{\nu,\beta}}
		 \right] \ast \nabla^{\tilde{\alpha}+\tilde{\ell}}g \right\|_{p} \\
		 & \le C
		 t\left\| \nabla^{\alpha+\ell-(\tilde{\alpha}+\tilde{\ell})+2} e^{\frac{\nu t \Delta}{2}} \mathcal{R}_{a} \mathcal{R}_{b}
		 \mathcal{F}^{-1} \left[
		 \frac{\chi_{L}}{1+\phi_{\nu,\beta}}
		 \right] \ast \nabla^{\tilde{\alpha}+\tilde{\ell}}g \right\|_{p} \\
		 & +C t^{2} \left\|  \nabla^{\alpha+\ell-(\tilde{\alpha}+\tilde{\ell})+3} e^{\frac{\nu t \Delta}{2}} \mathcal{R}_{a} \mathcal{R}_{b}
		 \mathcal{F}^{-1} \left[
		 \frac{\chi_{L}}{1+\phi_{\nu,\beta}}
		 \right] \ast \nabla^{\tilde{\alpha}+\tilde{\ell}}g \right\|_{p} \\
		 & \le C (1+t)^{\frac{1}{2}- \frac{\ell-\tilde{\ell}+ \alpha-\tilde{\alpha}}{2}}
		 \|\nabla^{\tilde{\alpha}+\tilde{\ell}}g \|_{p}.
	\end{split}
\end{equation}
Then, it follows from the estimates \eqref{eq:3.38}, \eqref{eq:3.41}, \eqref{eq:3.42} and \eqref{eq:3.43} that 
\begin{equation} \label{eq:3.44}
	\begin{split}
		& \| \nabla^{\alpha} (A_{\ell,1}-(-1)^{\frac{\ell}{2}} \beta^{\ell} \nabla^{\ell} \mathbb{G}_{1}^{(\beta)}(t) \ast g) \|_{p} \le C (1+t)^{\frac{1}{2}- \frac{\ell-\tilde{\ell}+ \alpha-\tilde{\alpha}}{2}}.
	\end{split}
\end{equation}
	The second term in \eqref{eq:3.16}, $\| \nabla^{\alpha} A_{\ell,2} \|_{p}$, is estimated as 
	\begin{equation} \label{eq:3.45}
	\begin{split}
		\| \nabla^{\alpha} A_{\ell,2} \|_{p} \le C (1+t)^{\frac{1}{2}- \frac{\ell-\tilde{\ell}+ \alpha-\tilde{\alpha}}{2}} \| \nabla^{\tilde{\alpha}+\tilde{\ell}} g \|_{p}.
	\end{split}
\end{equation}
	Indeed, this case follows by the same method as in the derivation of the estimate \eqref{eq:3.44}.
	Finally, we prove the estimate for $\| \nabla^{\alpha}  A_{\ell,3} \|_{p} +\| \nabla^{\alpha}  A_{\ell,4} \|_{p}$.  
	Noting that 
	\begin{equation} \label{eq:3.46}
		\begin{split}
			\mathcal{A}_{\ell, 3}:=\sum_{ \substack{ 0 \le j \le \ell-2 \\ j:even } } \binom{\ell}{j} \mathcal{A}_{\ell, 3, j}, \quad 
			\mathcal{A}_{\ell, 4}:=
			\sum_{ \substack{1 \le j \le \ell-3 \\ j:odd } }\binom{\ell}{j} 
			\mathcal{A}_{\ell, 4,j},
		\end{split}
	\end{equation}
	where
	\begin{equation*}
		\begin{split}
	 \mathcal{A}_{\ell, 3, j}:= \xi^{\alpha-\tilde{\alpha}}
			e^{-\frac{\nu |\xi|^{2} t}{2}}
			\left(
			-\frac{\nu }{2}
			\right)^{\ell-j} |\xi|^{2 \ell-\tilde{\ell}-j-1}
			(\beta  \phi_{\nu,\beta})^{j-1}(-1)^{\frac{j}{2}} \sin (\beta |\xi| \phi_{\nu,\beta} t) \frac{\xi_{a}\xi_{b}}{|\xi|^{2}}
		\end{split}
	\end{equation*}
	and 
	\begin{equation*}
		\begin{split}
			\mathcal{A}_{\ell, 4, j}:= \xi^{\alpha-\tilde{\alpha}}
			e^{-\frac{\nu |\xi|^{2} t}{2}}
			\left(
			-\frac{\nu }{2}
			\right)^{\ell-j} |\xi|^{2 \ell-\tilde{\ell}-j-1}
			(\beta  \phi_{\nu,\beta})^{j-1}(-1)^{\frac{j}{2}} \cos (\beta |\xi| \phi_{\nu,\beta} t) \frac{\xi_{a}\xi_{b}}{|\xi|^{2}},
		\end{split}
	\end{equation*}%
	we apply same argument again to see that 
\begin{equation*} 
	\begin{split}
		\| \mathcal{F}^{-1} [\chi_{L} \mathcal{A}_{\ell,3} ]  \|_{1} +\| \mathcal{F}^{-1} [\chi_{L} \mathcal{A}_{\ell,4} ]  \|_{1} \le C (1+t)^{\frac{1}{4}- \frac{\ell-\tilde{\ell}+ \alpha-\tilde{\alpha}}{2}},
	\end{split}
\end{equation*}
which implies the estimate 
\begin{equation} \label{eq:3.47}
	\begin{split}
		\| \nabla^{\alpha} A_{\ell,3} \|_{p}+ \| \nabla^{\alpha} A_{\ell,4} \|_{p} \le C (1+t)^{\frac{1}{4}- \frac{\ell-\tilde{\ell}+ \alpha-\tilde{\alpha}}{2}} \| \nabla^{\tilde{\alpha}+\tilde{\ell}} g \|_{p}.
	\end{split}
\end{equation}
%
Therefore we conclude the desired estimate \eqref{eq:3.14} for $\ell=2m$, $m \ge 1$ by the combination of \eqref{eq:3.44}, \eqref{eq:3.45} and \eqref{eq:3.47}.
We complete the proof of Lemma \ref{Lem:3.3}.
\end{proof}

\subsection{Low frequency part}
At first, we discuss the decay properties of the fundamental solutions to \eqref{eq:1.1}, defined by \eqref{eq:3.1} and \eqref{eq:3.2}.
\begin{prop} \label{Prop:3.4}
	Let $\alpha\ge \tilde{\alpha} \ge 0$, $\ell \ge \tilde{\ell} \ge 0$, $m \ge 0$, $1 \le q \le p\le \infty$ and $t \ge 0$.
	Then it holds that 
	\begin{equation} \label{eq:3.48}
		\begin{split}
			\left\| 
			(\partial^{\ell}_{t} \nabla^{\alpha} \mathbb{K}_{00}^{(\beta)})(t) \ast g
			\right\|_{p} 
			 \le C(1+t)^{-\frac{3}{2}(\frac{1}{q}-\frac{1}{p})-(\frac{1}{q}-\frac{1}{p})+\frac{1}{2}- \frac{\ell-\tilde{\ell}+ \alpha-\tilde{\alpha}}{2}} \| \nabla^{\tilde{\alpha}+\tilde{\ell}} g \|_{q}
		\end{split}
	\end{equation}
	and
	\begin{equation} \label{eq:3.49}
		\begin{split}
			 \left\| 
			(\partial^{\ell}_{t} \nabla^{\alpha} \mathbb{K}_{1}^{(\beta)})(t) \ast g 
			\right\|_{p} 
			 \le C(1+t)^{-\frac{3}{2}(\frac{1}{q}-\frac{1}{p})-(\frac{1}{q}-\frac{1}{p})+1- \frac{\ell-\tilde{\ell}+ \alpha-\tilde{\alpha}}{2}} \| \nabla^{\tilde{\alpha}+\tilde{\ell}} g \|_{q},
		\end{split}
	\end{equation}
	where $(p,q) \neq (1,1), (\infty,\infty)$ if $\ell+\alpha=0$.
\end{prop}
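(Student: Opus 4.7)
The plan is to deduce (3.48)--(3.49) by combining the approximation formulas of Proposition \ref{Prop:3.1} with the $L^p$-$L^q$ kernel estimates of Lemma \ref{Lem:2.2}. In each of the four cases (even/odd $\ell$ for $\mathbb{K}_{00}^{(\beta)}$, and even/odd $\ell$ for $\mathbb{K}_1^{(\beta)}$), I would write
\[
\partial_t^{\ell} \nabla^{\alpha} \mathbb{K}^{(\beta)} \ast g
= C_{\ell,\beta}\, \nabla^{\alpha+k} \mathbb{G}_j^{(\beta)} \ast g + R,
\]
where $k \in \{\ell-1,\ell,\ell+1\}$ and $j \in \{0,1\}$ are chosen according to (3.4)--(3.7). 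A short algebraic check verifies that in each of the four cases, the target exponent in (3.48)--(3.49) is exactly what Lemma \ref{Lem:2.2} delivers for $\nabla^{\alpha+k}\mathbb{G}_j^{(\beta)} \ast g$, while the remainder $R$ is controlled by Proposition \ref{Prop:3.1} with a rate that is at least as fast (indeed, $(1+t)^{-1/2}$ faster, so it is automatically absorbed).

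For the principal term, since $\mathbb{G}_j^{(\beta)} = \mathcal{R}_a\mathcal{R}_b G_{jL}^{(\beta)}$ by definition (3.3), I would invoke the $L^p$-boundedness of the Riesz transform (estimate (2.18)) to reduce
\[
\|\nabla^{\alpha+k} \mathbb{G}_j^{(\beta)} \ast g\|_p
\le C \|\nabla^{\alpha+k} G_{jL}^{(\beta)} \ast g\|_p,
\]
and then apply (2.9) or (2.10) of Lemma \ref{Lem:2.2} (with the appropriate splitting of derivatives between the kernel and $g$ encoded in the pair $(\tilde\alpha,\tilde\ell)$). This yields (3.48) and (3.49) throughout the range $1 < q \le p < \infty$.

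The main obstacle is the endpoint range $(p,q) \in \{(1,1),(\infty,\infty)\}$, which the proposition admits whenever $\ell + \alpha \ge 1$; here the Riesz transform fails to be $L^p$-bounded, so the previous step breaks down. For these endpoints I would instead bound the convolution kernel $\partial_t^{\ell-\tilde\ell}\nabla^{\alpha-\tilde\alpha}\mathbb{G}_j^{(\beta)}(t,\cdot)$ directly in $L^1$ and apply Young's convolution inequality, absorbing the remaining spatial derivatives onto $g$ to produce the factor $\|\nabla^{\tilde\alpha+\tilde\ell} g\|_q$. The $L^1$ bound itself is obtained through the interpolation inequality (2.20), by estimating both the $L^2$ and $\nabla_\xi^2 L^2$ norms of the underlying Fourier symbol: the compact support of $\chi_L$ together with the factor $e^{-\nu t|\xi|^2/2}$ supplies the correct $t$-dependence, while the assumption $\ell + \alpha \ge 1$ provides just enough vanishing at the origin to dominate the singular factor $\xi_a\xi_b/|\xi|^2$ after two $\xi$-derivatives. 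This mirrors the derivation of (2.19) already cited in Section 2.
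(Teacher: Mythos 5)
Your overall skeleton --- decompose $\partial_t^{\ell}\nabla^{\alpha}\mathbb{K}^{(\beta)}\ast g$ into a leading diffusion-wave term plus the remainder controlled by Proposition \ref{Prop:3.1}, and check that the remainder decays a factor $(1+t)^{-1/2}$ faster --- is exactly the paper's, and your bookkeeping of the exponents in the four parity cases is correct. The difference, and the problem, lies in how you treat the leading term $\nabla^{\alpha+k}\mathbb{G}_j^{(\beta)}\ast g$. The paper bounds the kernel $\partial_t^{\ell}\nabla^{\alpha}\mathbb{G}_j^{(\beta)}(t)$ itself in every $L^{r}$, $1\le r\le\infty$ (Lemma \ref{Lem:3.5}), and then applies Young's inequality; this covers all pairs $(p,q)$ at once and makes the Riesz-transform step unnecessary.

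Two concrete gaps in your version. First, coverage: the Riesz transform is unbounded on $L^{\infty}$ as well as on $L^{1}$, so your first step fails for \emph{every} pair with $p=\infty$, not only for $(\infty,\infty)$; the cases $(p,q)=(\infty,q)$ with $q<\infty$ --- in particular the $L^{\infty}$--$L^{1}$ bound used repeatedly in Section 6 --- are reached by neither branch of your argument, since your fallback via Young's inequality with an $L^{1}$ kernel only gives $q=p$. Second, and more seriously, the endpoint kernel bound you propose is quantitatively too weak. Estimating $\|\nabla^{\gamma}\mathbb{G}_j^{(\beta)}(t)\|_{1}$ via \eqref{eq:2.20} requires $\|\nabla_\xi^{2}(\cdot)\|_{2}$ of the symbol, and two $\xi$-derivatives of $\cos(\beta|\xi|t)$ (or of $\sin(\beta|\xi|t)/(\beta|\xi|)$) produce a factor $t^{2}$ with no compensating powers of $|\xi|$; on the Gaussian scale $|\xi|\sim t^{-1/2}$ this costs a factor of $t$ relative to the heat-kernel computation, and after the $\tfrac14$--$\tfrac34$ interpolation one obtains, for instance, $\|\nabla\mathbb{G}_0^{(\beta)}(t)\|_{1}\le C(1+t)^{1/4}$ instead of the bound $C$ demanded by \eqref{eq:3.50} at $p=1$. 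This $(1+t)^{1/4}$ loss propagates to the conclusion, so the rates in \eqref{eq:3.48}--\eqref{eq:3.49} would not be attained. The sharp $L^{r}$ bounds are not heat-kernel estimates: one must factor $\mathbb{G}_j^{(\beta)}(t)=W_j^{(\beta)}(t)\,\mathcal{R}_a\mathcal{R}_b\mathcal{F}^{-1}[e^{-\nu t|\xi|^{2}/2}\chi_L]$ and combine the $L^{p}$-boundedness of the wave propagators \eqref{eq:2.16}--\eqref{eq:2.17} with \eqref{eq:2.19}, as the paper does when proving \eqref{eq:7.10}--\eqref{eq:7.11}. Replacing your endpoint step by this argument, for all $r$ rather than only $r=1$, repairs both defects and in fact subsumes your Riesz-transform step.
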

For the proof of Proposition \ref{Prop:3.4}, the following lemma is useful.
\begin{lem} \label{Lem:3.5}
	Let $\alpha \ge 0$, $\ell \ge 0$ and $t>0$.
	Then it holds that 
	\begin{equation}  \label{eq:3.50}
		\begin{split}
			\left\| \partial^{\ell}_{t} \nabla^{\alpha} 
			\mathbb{G}_{0}^{(\beta)}(t) 
			\right\|_{p} 
			\le
		C (1+t)^{-\frac{3}{2}(1-\frac{1}{p})-(1-\frac{1}{p})+\frac{1}{2}-\frac{\ell+\alpha}{2}},
		\end{split}
	\end{equation}
	\begin{equation}  \label{eq:3.51}
		\begin{split}
			\left\| \partial^{\ell}_{t} \nabla^{\alpha} 
			\mathbb{G}_{1}^{(\beta)}(t) 
			\right\|_{p} 
			\le C (1+t)^{-\frac{3}{2}(1-\frac{1}{p})-(1-\frac{1}{p})+1-\frac{\ell+\alpha}{2} 
			},
		\end{split}
	\end{equation}
	where $1 < p \le \infty$ for $\ell+\alpha=0$ and $1 \le p \le \infty$ for $\ell+\alpha \ge 1$.
\end{lem}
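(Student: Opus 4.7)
I would prove the estimates by establishing endpoint bounds at $p=\infty$ and $p=1$ (the latter only when $\ell+\alpha\ge 1$) and then invoking the Riesz-Thorin interpolation theorem, in the same spirit as the proofs of Lemmas~\ref{Lem:3.2} and~\ref{Lem:3.3}. In the borderline case $\ell+\alpha=0$, where the $p=1$ endpoint is unavailable, I would interpolate instead between $p=\infty$ and the direct Plancherel bound at $p=2$, recovering the full range $1<p\le\infty$.

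The $L^\infty$ endpoint is the heart of the matter. A naive Hausdorff-Young estimate
\[
\|\partial_t^\ell \nabla^\alpha \mathbb{G}_j^{(\beta)}(t)\|_\infty \le C\left\| |\xi|^\alpha \partial_t^\ell \mathcal{G}_j^{(\beta)}(t,\xi)\,\frac{\xi_a \xi_b}{|\xi|^2}\chi_L\right\|_1
\]
yields only $(1+t)^{-3/2-(\ell+\alpha)/2}$ for $\mathbb{G}_0^{(\beta)}$, short of the claimed $(1+t)^{-2-(\ell+\alpha)/2}$ by a factor $t^{1/2}$, with an analogous deficit for $\mathbb{G}_1^{(\beta)}$. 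The missing $t^{1/2}$ must be extracted from the dispersive factors $\cos(\beta|\xi|t)$ and $\sin(\beta|\xi|t)$. Writing these as $\tfrac{1}{2}(e^{i\beta|\xi|t}\pm e^{-i\beta|\xi|t})$ and absorbing the exponentials into the Fourier phase reduces the problem to radial oscillatory integrals
\[
\int_{\R^3} e^{i(x\cdot\xi \pm \beta|\xi|t)}\, a(\xi)\, e^{-\nu|\xi|^2 t/2}\, d\xi,
\]
where $a$ is smooth with $\supp a \subset \supp\chi_L$. A partition of unity combined with non-stationary phase integration by parts for $|x|$ away from $\beta t$, together with a surface stationary phase estimate near $|x|\sim\beta t$, produces the required $t^{1/2}$ gain. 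This is the Hoff-Zumbrun dispersive mechanism already underlying the estimates of Lemma~\ref{Lem:2.2}.

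For the $L^1$ endpoint under $\ell+\alpha\ge 1$, I would apply the Fourier-side interpolation inequality \eqref{eq:2.20}:
\[
\|\partial_t^\ell \nabla^\alpha \mathbb{G}_j^{(\beta)}(t)\|_1 \leq C\|\hat h\|_2^{1/4}\|\nabla_\xi^2 \hat h\|_2^{3/4}
\]
with $\hat h(\xi):=(i\xi)^\alpha \partial_t^\ell \mathcal{G}_j^{(\beta)}(t,\xi)\,\frac{\xi_a\xi_b}{|\xi|^2}\chi_L(\xi)$. Both $L^2$ norms are bounded via pointwise estimates analogous to \eqref{eq:3.31}--\eqref{eq:3.37}, combined with the Gaussian $e^{-\nu|\xi|^2 t/2}$. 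The crucial subtlety is that $\nabla_\xi^2(\xi_a\xi_b/|\xi|^2)=O(|\xi|^{-2})$ near the origin, which is locally $L^2(\R^3)$-integrable only after multiplication by a $|\xi|^{\ell+\alpha}$ factor with $\ell+\alpha\ge 1$; this is precisely why the $p=1$ endpoint is lost at $\ell+\alpha=0$. Riesz-Thorin interpolation then closes the argument, with the dispersive $t^{1/2}$ gain in the $L^\infty$ bound being the technical obstacle.
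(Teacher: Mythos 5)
The paper omits the proof of Lemma~\ref{Lem:3.5}, but the intended argument is visible elsewhere (in the treatment of $A_{\ell,1,3}$ and $A_{\ell,2,2}$ in Lemmas~\ref{Lem:3.2}--\ref{Lem:3.3}, and in \eqref{eq:7.10}--\eqref{eq:7.11}): one factors
$\mathbb{G}_j^{(\beta)}(t)=W_j^{(\beta)}(t)\,\mathcal{R}_a\mathcal{R}_b\,\mathcal{F}^{-1}[e^{-\nu t|\xi|^2/2}\chi_L]$,
uses Lemma~\ref{Lem:2.5} for the $L^\infty$ endpoint and \eqref{eq:2.16}--\eqref{eq:2.17} for the $L^1$ endpoint, in both cases landing on \eqref{eq:2.19}. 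Your overall skeleton (two endpoints plus Riesz--Thorin) is the same, and your $L^\infty$ analysis is sound: you correctly identify the $t^{1/2}$ deficit of Hausdorff--Young and the stationary-phase mechanism that removes it (this is essentially re-deriving Lemma~\ref{Lem:2.5}).

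However, your $L^1$ endpoint fails quantitatively. The kernel $\mathbb{G}_1^{(\beta)}(t)$ is concentrated on a shell of radius $\sim\beta t$ and thickness $\sim t^{1/2}$; consequently $\|x^2h\|_2\approx t^2\|h\|_2$, and \eqref{eq:2.20} can give at best $\|h\|_1\lesssim t^{3/2}\|h\|_2$, whereas the sharp bound is $(\text{shell volume})^{1/2}\|h\|_2\approx t^{5/4}\|h\|_2$. Concretely, for $\ell=0$, $\alpha\ge1$ one has $\|\hat h\|_2\sim t^{-1/4-\alpha/2}$ and $\|\nabla_\xi^2\hat h\|_2\sim t^{7/4-\alpha/2}$ (the worst term being $\nabla_\xi^2$ falling on $\sin(\beta|\xi|t)$), so \eqref{eq:2.20} yields $t^{5/4-\alpha/2}$ instead of the claimed $t^{1-\alpha/2}$ in \eqref{eq:3.51}; this $t^{1/4}$ loss is intrinsic to the second-moment method and propagates through Riesz--Thorin to every $p<\infty$ near $1$. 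The correct $L^1$ endpoint is the trivial wave bound $\|W_1^{(\beta)}(t)g\|_1\le Ct\|g\|_1$ (resp.\ \eqref{eq:2.16}) applied to $g=\partial_t^{\tilde\ell}\nabla^{\tilde\alpha}\mathcal{R}_a\mathcal{R}_b\mathcal{F}^{-1}[e^{-\nu t|\xi|^2/2}\chi_L]$, controlled by \eqref{eq:2.19} — which is also where the hypothesis $\ell+\alpha\ge1$ genuinely enters. A second, smaller gap: for $\ell+\alpha=0$ you propose to interpolate between $p=2$ and $p=\infty$ and claim this recovers $1<p\le\infty$; it only gives $2\le p\le\infty$. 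The range $1<p<2$ requires instead the $L^p$-boundedness of the Riesz transform \eqref{eq:2.18} applied to the non-transformed kernel $\mathcal{F}^{-1}[\mathcal{G}_{jL}^{(\beta)}\chi_L]$, for which the $L^1$ bound does hold.
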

The proof is straightforward. We omit the detail.
\begin{proof}[Proof of Proposition \ref{Prop:3.4}]
At first, we show the estimate \eqref{eq:3.49} with $\ell=2m$.
Noting the estimates \eqref{eq:3.6} and \eqref{eq:3.51}, we see that
\begin{equation*} 
	\begin{split}
		& \left\| \partial^{\ell}_{t} \nabla^{\alpha} 
		 \mathbb{K}_{1}^{(\beta)}(t) \ast g 
		\right\|_{p}  \\
		& \left\| \nabla^{\alpha} \left(
		\partial^{\ell}_{t} \mathbb{K}_{1}^{(\beta)}(t) \ast g - (-1)^{\frac{\ell}{2}} \beta^{\ell} \nabla^{\ell} \mathbb{G}_{1}^{(\beta)}(t) \ast g
		\right)
		\right\|_{p} +\left\| \nabla^{\alpha} (-1)^{\frac{\ell}{2}} \beta^{\ell} \nabla^{\ell} \mathbb{G}_{1}^{(\beta)}(t) \ast g
		\right\|_{p}\\
		&\le C(1+t)^{-\frac{3}{2}(\frac{1}{q}-\frac{1}{p})-(\frac{1}{q}-\frac{1}{p})+1- \frac{\ell-\tilde{\ell}+ \alpha-\tilde{\alpha}}{2}} \| \nabla^{\tilde{\alpha}+\tilde{\ell}} g \|_{q},
	\end{split}
\end{equation*}
		which is the desired estimate \eqref{eq:3.49} with $\ell=2m$.
The other cases for the estimates \eqref{eq:3.48} and \eqref{eq:3.49} are shown in a similar way. 
We complete the proof of proposition.
\end{proof}
Next we state the expansion formulas of $\partial^{\ell}_{t} \mathbb{G}_{0}^{(\beta)}(t) \ast g$ and $\partial^{\ell}_{t} \mathbb{G}_{1}^{(\beta)}(t) \ast g$ as $t \to \infty$.
\begin{prop} \label{Prop:3.6}
	Let $\alpha, \ell, m \ge 0$ and $g \in L^{1}$.
	Then it holds that 
	\begin{equation} \label{eq:3.52}
		\begin{split}
			\left\| \nabla^{\alpha} \left(
			\partial^{\ell}_{t} \mathbb{G}_{0}^{(\beta)}(t) \ast g - m_{g}(-1)^{\frac{\ell}{2}} \beta^{\ell} \nabla^{\ell} \mathbb{G}_{0}^{(\beta)}(t) 
			\right)
			\right\|_{p} 
			= o(t^{-\frac{3}{2}(1-\frac{1}{p})-(1-\frac{1}{p})+\frac{1}{2}-\frac{\ell+\alpha}{2}})
		\end{split}
	\end{equation}
	for $\ell=2m$,
	\begin{equation} \label{eq:3.53}
		\begin{split}
			\left\| \nabla^{\alpha} \left(
			\partial^{\ell}_{t} \mathbb{G}_{0}^{(\beta)}(t) \ast g - m_{g} (-1)^{\frac{\ell+1}{2}} \beta^{\ell+1} \nabla^{\ell+1} \mathbb{G}_{1}^{(\beta)}(t) 
			\right)
			\right\|_{p} 
			= o(t^{-\frac{3}{2}(1-\frac{1}{p})-(1-\frac{1}{p})+\frac{1}{2}-\frac{\ell+\alpha}{2}})
		\end{split}
	\end{equation}
	for $\ell=2m+1$,
	\begin{equation} \label{eq:3.54}
		\begin{split}
			\left\| \nabla^{\alpha} \left(
			\partial^{\ell}_{t} \mathbb{G}_{1}^{(\beta)}(t) \ast g -  m_{g} (-1)^{\frac{\ell}{2}} \beta^{\ell} \nabla^{\ell} \mathbb{G}_{1}^{(\beta)}(t) 
			\right)
			\right\|_{p} 
			= o(t^{-\frac{3}{2}(1-\frac{1}{p})-(1-\frac{1}{p})+1-\frac{\ell+\alpha}{2} })
		\end{split}
	\end{equation}
	for $\ell=2m$ and 
	\begin{equation} \label{eq:3.55}
		\begin{split}
			\left\| \nabla^{\alpha} \left(
			\partial^{\ell}_{t} \mathbb{G}_{1}^{(\beta)}(t) \ast g - m_{g} (-1)^{\frac{\ell-1}{2}} \beta^{\ell-1} \nabla^{\ell-1} \mathbb{G}_{0}^{(\beta)}(t) 
			\right)
			\right\|_{p} 
			= o(t^{-\frac{3}{2}(1-\frac{1}{p})-(1-\frac{1}{p})+1-\frac{\ell+\alpha}{2} })
		\end{split}
	\end{equation}
	for $\ell=2m+1$, 
	as $t \to \infty$, where $1<p \le \infty$ for $\ell+\alpha=0$ and $1 \le p \le \infty$ for $\ell+\alpha \ge 1$.
	Here $m_{g}$ is defined by 
	\begin{equation} \label{eq:3.56}
		\begin{split}
	m_{g}:=\displaystyle\int_{\R^{3}} g(x) dx.
\end{split}
\end{equation}
\end{prop}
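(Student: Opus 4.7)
The plan is to combine a low-frequency Taylor expansion of the relevant Fourier symbols with a standard density argument exploiting $g \in L^1$. I will sketch only \eqref{eq:3.52} (i.e.\ $\ell = 2m$ with $\mathbb{G}_0^{(\beta)}$); the other three cases follow by the same scheme after tracking parity via the first-order identities $\partial_t \mathcal{G}_0^{(\beta)} = -\frac{\nu|\xi|^2}{2}\mathcal{G}_0^{(\beta)} - \beta^2|\xi|^2\mathcal{G}_1^{(\beta)}$ and $\partial_t \mathcal{G}_1^{(\beta)} = -\frac{\nu|\xi|^2}{2}\mathcal{G}_1^{(\beta)} + \mathcal{G}_0^{(\beta)}$, which account for the switch between $\mathbb{G}_0^{(\beta)}$ and $\mathbb{G}_1^{(\beta)}$ as the leading profile in the odd-$\ell$ cases.

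First I will establish the deterministic ``symbol-expansion'' estimate
\begin{equation*}
\bigl\| \nabla^\alpha \bigl( \partial_t^\ell \mathbb{G}_0^{(\beta)}(t) - (-1)^{\ell/2}\beta^\ell \nabla^\ell \mathbb{G}_0^{(\beta)}(t) \bigr) \bigr\|_p \le C(1+t)^{-\rho-1},
\end{equation*}
where $\rho$ denotes the decay rate appearing in \eqref{eq:3.52}. Iterating the two first-order identities above produces $\partial_t^\ell \mathcal{G}_0^{(\beta)}(t,\xi) = (-1)^{\ell/2}\beta^\ell|\xi|^\ell \mathcal{G}_0^{(\beta)}(t,\xi) + R_\ell(t,\xi)$ on $\supp\chi_L$, where the remainder symbol $R_\ell$ carries at least two additional powers of $|\xi|$ (one $|\xi|^2$ per time derivative that ultimately lands on the Gaussian factor $e^{-\nu|\xi|^2 t/2}$). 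When estimated in the manner of Lemma~\ref{Lem:3.5}, each such extra $|\xi|^2$ contributes an additional $(1+t)^{-1}$, which yields the claim. After multiplying by $m_g$, this portion of \eqref{eq:3.52} is automatically $o(t^{-\rho})$.

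Next, for the remaining piece $\partial_t^\ell \mathbb{G}_0^{(\beta)}(t) \ast g - m_g\, \partial_t^\ell \mathbb{G}_0^{(\beta)}(t)$ I will run a density argument. Given $\eta > 0$, pick $g_\eta \in C_c^\infty(\R^3)$ with $\|g - g_\eta\|_{L^1} < \eta$ and decompose
\begin{equation*}
\partial_t^\ell \mathbb{G}_0^{(\beta)} \ast g - m_g \partial_t^\ell \mathbb{G}_0^{(\beta)}
= \partial_t^\ell \mathbb{G}_0^{(\beta)} \ast (g - g_\eta) - (m_g - m_{g_\eta})\partial_t^\ell \mathbb{G}_0^{(\beta)} + \bigl( \partial_t^\ell \mathbb{G}_0^{(\beta)} \ast g_\eta - m_{g_\eta} \partial_t^\ell \mathbb{G}_0^{(\beta)} \bigr).
\end{equation*}
Proposition~\ref{Prop:3.4}, together with $|m_g - m_{g_\eta}| \le \|g - g_\eta\|_{L^1} < \eta$, bounds the first two summands by $C\eta(1+t)^{-\rho}$. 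For the third, since $x g_\eta \in L^1$, the Taylor expansion $\hat{g}_\eta(\xi) - \hat{g}_\eta(0) = O(|\xi|)$ on $\supp\chi_L$ supplies one additional factor of $|\xi|$ at the Fourier level, which in the manner of Lemma~\ref{Lem:3.5} yields a further $(1+t)^{-1/2}$ of decay; hence for each fixed $\eta$ this summand is $O((1+t)^{-\rho-1/2})$. Taking $\limsup_{t\to\infty}(1+t)^{\rho}\|\cdot\|_{L^p}$ in the decomposition gives a bound $\le C\eta$, and letting $\eta \to 0$ closes the proof.

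The principal obstacle I anticipate is the parity bookkeeping in the first step: for each of the four cases one must verify that the leading Leibniz term of $\partial_t^\ell \mathcal{G}_j^{(\beta)}$ is exactly the claimed $(-1)^{\cdot}\beta^{\cdot}|\xi|^{\cdot}\mathcal{G}_{0 \text{ or }1}^{(\beta)}$, with the correct sign and the correct switch from $\mathcal{G}_0^{(\beta)}$ to $\mathcal{G}_1^{(\beta)}$ (or vice versa) dictated by the parity of $\ell$. Once this identification is pinned down by a routine induction on $\ell$ based on the two first-order identities above, the two analytical steps extend uniformly to \eqref{eq:3.53}--\eqref{eq:3.55}.
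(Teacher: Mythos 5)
Your proposal is correct in substance, but it reaches the key $o(\cdot)$ conclusion by a different route than the paper. The paper proves only the representative case $\ell=\alpha=0$ explicitly and does so by splitting the convolution integral at $|y|=t^{1/4}$: on $\{|y|\le t^{1/4}\}$ the mean value theorem plus the gradient bound \eqref{eq:3.50} costs $t^{1/4}\cdot t^{-1/2}$, while on $\{|y|\ge t^{1/4}\}$ the tail $\int_{|y|\ge t^{1/4}}|g|\,dy\to 0$ supplies the $o(1)$; no approximation of $g$ is needed. You instead approximate $g$ in $L^{1}$ by $g_\eta\in C_c^\infty$, for which the first moment is finite and the mean value theorem gives a clean extra $t^{-1/2}$, and control the approximation error uniformly in $t$. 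Both are standard implementations of the same principle and both rest on the $L^p$ kernel bounds of Lemma \ref{Lem:3.5}; your version is more modular, and you make explicit the symbol-expansion step needed for $\ell\ge 1$ (which the paper dismisses as ``easier''). Two small corrections. First, your citation of Proposition \ref{Prop:3.4} for the term $\partial_t^\ell\mathbb{G}_0^{(\beta)}\ast(g-g_\eta)$ is off — that proposition concerns $\mathbb{K}_{00}^{(\beta)}$ and $\mathbb{K}_{1}^{(\beta)}$; what you need is Lemma \ref{Lem:3.5} combined with Young's inequality $\|K\ast h\|_p\le\|K\|_p\|h\|_1$, which gives the same bound. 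Second, your claimed rate $C(1+t)^{-\rho-1}$ for the symbol remainder is too strong: the worst Leibniz term has exactly one time derivative falling on the Gaussian, e.g.\ for $\ell=2$ the cross term is $\nu\beta^2|\xi|^4\mathcal{G}_1^{(\beta)}$ versus the leading $\beta^2|\xi|^2\mathcal{G}_0^{(\beta)}$, and since $\mathcal{G}_1^{(\beta)}$ decays $(1+t)^{1/2}$ more slowly than $\mathcal{G}_0^{(\beta)}$ in $L^p$, the net gain is only $(1+t)^{-1/2}$. That weaker rate $C(1+t)^{-\rho-1/2}$ still yields $o(t^{-\rho})$, so the proof goes through unchanged.
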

\begin{proof}
	We only prove the estimate \eqref{eq:3.52} for the case $j+\ell=0$ and $1 <p\le \infty$.
	The proofs of the other cases are easier.
	
	We write
	\begin{equation*}
		\begin{split}
			& \mathbb{G}_{0}^{(\beta)}(t)\ast g - m_{g} \mathbb{G}_{0}^{(\beta)}(t) \\ 
			& = \int_{|y|\le t^{\frac{1}{4}}} (\mathbb{G}_{0}^{(\beta)}(t,x-y)-\mathbb{G}_{0}^{(\beta)}(t,x) ) g(y) dy \\
			& +\int_{|y|\ge t^{\frac{1}{4}}} \mathbb{G}_{0}^{(\beta)}(t,x-y) g(y) dy -\int_{|y|\ge t^{\frac{1}{4}}} \mathbb{G}_{0}^{(\beta)}(t,x) g(y) dy.
		\end{split}
	\end{equation*}
	On the other hand, applying the mean value theorem with some $\theta \in [0,1]$, 
	we have 
		\begin{equation*}
		\begin{split}
			& \left\| \int_{|y|\le t^{\frac{1}{4}}} (\mathbb{G}_{0}^{(\beta)}(t,x-y)-\mathbb{G}_{0}^{(\beta)}(t,x) ) g(y) dy \right\|_{p} \\
			&  \le C \int_{|y|\le t^{\frac{1}{4}}} |y| \left\| \nabla \mathbb{G}_{0}^{(\beta)}(t,x-\theta y) \right\|_{L^{p}_{x}} |g(y)| dy \\
			& \le C t^{-\frac{3}{2}(1-\frac{1}{p})-(1-\frac{1}{p})+\frac{1}{4}}  \| g \|_{1}.
		\end{split}
	\end{equation*}
	Here we used the estimate \eqref{eq:3.50}.
	Therefore we arrive at the following estimate: 
	\begin{equation*}
		\begin{split}
			& \| \mathbb{G}_{0}^{(\beta)}(t)\ast g - m_{g} \mathbb{G}_{0}^{(\beta)}(t) \|_{p} \\ 
			& \le C t^{-\frac{3}{2}(1-\frac{1}{p})-(1-\frac{1}{p})+\frac{1}{4}}  \| g \|_{1}
			+C \| \mathbb{G}_{0}^{(\beta)}(t) \|_{p} \int_{|y|\ge t^{\frac{1}{4}}} |g(y)| dy  \\
			& \le C t^{-\frac{3}{2}(1-\frac{1}{p})-(1-\frac{1}{p})+\frac{1}{2}} 
			\left( t^{-\frac{1}{4}} \| g \|_{1} + C t  \int_{|y|\ge t^{\frac{1}{4}}}  |g(y)| dy \right)\\
			& =o(t^{-\frac{3}{2}(1-\frac{1}{p})-(1-\frac{1}{p})+\frac{1}{2}} )
		\end{split}
	\end{equation*}
	as $t \to \infty$, since $g \in L^{1}$.
	We complete the proof of Proposition \ref{Prop:3.6}.
\end{proof}

Finally we can also obtain the approximation formulas of $K_{00L}^{(\beta)}(t) g:= \mathcal{F}^{-1}[\mathcal{K}_{00}^{(\beta)}(t, \xi) \chi_{L}]\ast g$ and $K_{1L}^{(\beta)}(t) \ast g$,
applying the same argument.
\begin{cor} \label{cor:3.7}
	Under the assumption on Proposition \ref{Prop:3.1}, 
	it holds that 
	\begin{equation} \label{eq:3.57}
		\begin{split}
			& \left\| \nabla^{\alpha} \left(
			\partial^{\ell}_{t} K_{00L}^{(\beta)}(t) g -(-1)^{\frac{\ell}{2}} \beta^{\ell} \nabla^{\ell} G_{0L}^{(\beta)}(t) \ast g  
			\right)
			\right\|_{p} \\
			& \le C(1+t)^{-\frac{3}{2}(\frac{1}{q}-\frac{1}{p})-(\frac{1}{q}-\frac{1}{p})- \frac{\ell-\tilde{\ell}+ \alpha-\tilde{\alpha}}{2}} \| \nabla^{\tilde{\alpha}+\tilde{\ell}} g \|_{q}
		\end{split}
	\end{equation}
	for $\ell=2m$,
	\begin{equation} \label{eq:3.58}
		\begin{split}
			& \left\| \nabla^{\alpha} \left(
			\partial^{\ell}_{t} K_{00L}^{(\beta)}(t) g -(-1)^{\frac{\ell+1}{2}} \beta^{\ell+1} \nabla^{\ell+1} G_{1L}^{(\beta)}(t)\ast g 
			\right)
			\right\|_{p} \\
			& \le C(1+t)^{-\frac{3}{2}(\frac{1}{q}-\frac{1}{p})-(\frac{1}{q}-\frac{1}{p})- \frac{\ell-\tilde{\ell}+ \alpha-\tilde{\alpha}}{2}} \| \nabla^{\tilde{\alpha}+\tilde{\ell}} g \|_{q}
		\end{split}
	\end{equation}
	for $\ell=2m+1$,
	\begin{equation} \label{eq:3.59}
		\begin{split}
			& \left\| \nabla^{\alpha} \left(
			\partial^{\ell}_{t} K_{1L}^{(\beta)}(t) g - (-1)^{\frac{\ell}{2}} \beta^{\ell} \nabla^{\ell} G_{1L}^{(\beta)}(t) \ast g
			\right)
			\right\|_{p} \\ 
			& \le C(1+t)^{-\frac{3}{2}(\frac{1}{q}-\frac{1}{p})-(\frac{1}{q}-\frac{1}{p})+\frac{1}{2}- \frac{\ell-\tilde{\ell}+ \alpha-\tilde{\alpha}}{2}} \| \nabla^{\tilde{\alpha}+\tilde{\ell}} g \|_{q}
		\end{split}
	\end{equation}
	for $\ell=2m$ and 
	\begin{equation} \label{eq:3.60}
		\begin{split}
			& \left\| \nabla^{\alpha} \left(
			\partial^{\ell}_{t} K_{1L}^{(\beta)}(t) g - m_{g} (-1)^{\frac{\ell-1}{2}} \beta^{\ell-1} \nabla^{\ell-1} G_{0L}^{(\beta)}(t) \ast g
			\right)
			\right\|_{p} \\
			& \le C(1+t)^{-\frac{3}{2}(\frac{1}{q}-\frac{1}{p})-(\frac{1}{q}-\frac{1}{p})+\frac{1}{2}-\frac{\ell-\tilde{\ell}+ \alpha-\tilde{\alpha}}{2}} \| \nabla^{\tilde{\alpha}+\tilde{\ell}} g \|_{q}
		\end{split}
	\end{equation}
	for $\ell=2m+1$.
\end{cor}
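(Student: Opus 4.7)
The plan is to replicate the proof of Proposition \ref{Prop:3.1} verbatim, with the sole change that every occurrence of the Riesz multiplier $\xi_a\xi_b/|\xi|^2$ is deleted from the Fourier symbols appearing in the argument. Indeed, comparing the definitions \eqref{eq:3.1}--\eqref{eq:3.3} with \eqref{eq:2.2} (together with Lemma \ref{Lem:2.3}) shows that $\mathbb{K}_{00}^{(\beta)}, \mathbb{K}_{1}^{(\beta)}, \mathbb{G}_{0}^{(\beta)}, \mathbb{G}_{1}^{(\beta)}$ differ from $K_{00L}^{(\beta)}, K_{1L}^{(\beta)}, G_{0L}^{(\beta)}, G_{1L}^{(\beta)}$ only by a composition with $\mathcal{R}_{a}\mathcal{R}_{b}$, so the same scalar symbols $\mathcal{K}_{00}^{(\beta)}$, $\mathcal{K}_{1}^{(\beta)}$, $\mathcal{G}_{0}^{(\beta)}$, $\mathcal{G}_{1}^{(\beta)}$ drive both families of estimates.

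The key steps mirror those for Proposition \ref{Prop:3.1}. First, using the representation \eqref{eq:2.4}--\eqref{eq:2.5}, I would decompose $\partial_{t}^{\ell}\mathcal{K}_{00L}^{(\beta)}(t,\xi)\chi_{L}$ and $\partial_{t}^{\ell}\mathcal{K}_{1L}^{(\beta)}(t,\xi)\chi_{L}$ as in \eqref{eq:3.16}, and further split the principal pieces along the lines of \eqref{eq:3.17} and \eqref{eq:3.25}. Second, I would establish the $L^{1}$-to-$L^{\infty}$ endpoint bounds, i.e.\ the analogues of Lemma \ref{Lem:3.2}, by estimating the Fourier symbols in $L^{1}_{\xi}$ and invoking the pointwise identities \eqref{eq:3.18}, \eqref{eq:3.19}, \eqref{eq:3.21}, \eqref{eq:3.26}, together with \eqref{eq:2.14} and \eqref{eq:2.19} for the residual oscillatory term (the counterpart of $A_{\ell,1,3}$). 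Third, I would establish the $L^{p}$-to-$L^{p}$ endpoint bounds, i.e.\ the analogues of Lemma \ref{Lem:3.3}, via the interpolation \eqref{eq:2.20} combined with the $\xi$-derivative bounds \eqref{eq:3.31}--\eqref{eq:3.37}. Finally, I would interpolate these two endpoints by Riesz--Thorin (cf.\ \cite{B}), exactly as in the passage from Lemmas \ref{Lem:3.2}--\ref{Lem:3.3} to Proposition \ref{Prop:3.1}, to obtain \eqref{eq:3.57}--\eqref{eq:3.60}.

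None of the pointwise ingredients \eqref{eq:3.18}--\eqref{eq:3.37} involve the Riesz factor, so removing $\xi_{a}\xi_{b}/|\xi|^{2}$ strictly simplifies each bound: in particular, in the derivative-in-$\xi$ estimates of the type \eqref{eq:3.40}, the symbols become smoother near $\xi=0$, so those bounds hold \emph{a fortiori}. The principal obstacle is therefore purely one of bookkeeping: one must verify, case by case across the four statements \eqref{eq:3.57}--\eqref{eq:3.60} and across the parity of $\ell$, that the resulting $L^{\infty}$ and $L^{p}$ endpoint rates match the ones obtained in Lemmas \ref{Lem:3.2}--\ref{Lem:3.3}, so that Riesz--Thorin interpolation produces the advertised exponent $-\frac{3}{2}(\frac{1}{q}-\frac{1}{p})-(\frac{1}{q}-\frac{1}{p})+c-\frac{\ell-\tilde{\ell}+\alpha-\tilde{\alpha}}{2}$ with the correct constant $c\in\{0,1/2\}$, and the precise power of $\nabla$ in the leading ``approximate'' term.
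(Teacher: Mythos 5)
Your proposal matches the paper's own treatment: the authors prove Corollary \ref{cor:3.7} simply by remarking that it follows "applying the same argument" as Proposition \ref{Prop:3.1}, i.e.\ by rerunning the decompositions \eqref{eq:3.16}, \eqref{eq:3.17}, \eqref{eq:3.25} and the endpoint Lemmas \ref{Lem:3.2}--\ref{Lem:3.3} with the factor $\xi_a\xi_b/|\xi|^2$ deleted, followed by Riesz--Thorin interpolation. Your observation that dropping the Riesz multiplier only makes the symbols smoother near $\xi=0$ (so all pointwise and $\nabla_\xi$-derivative bounds hold a fortiori) is exactly the reason the authors felt no further argument was needed.
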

%
\section{Linear estimates for middle and high frequency parts}
%
In this section, we shall show the estimates for the middle and high frequency parts of the fundamental solutions to \eqref{eq:1.1}. 
At first, we summarize the estimates in the $L^{p}$ for $1 \le p \le \infty$, 
which are proved in the forthcoming paper \cite{Kagei-T2}.
\begin{prop} \label{prop:4.1}
	Let $\alpha \ge \tilde{\alpha} \ge 0$, $\ell \ge 2 \tilde{\ell} \ge 0$ and $t>0$. 
	Then, the following estimates are hold:
	\begin{equation}
		\begin{split}
			& \| \partial_{t}^{\ell} \nabla^{\alpha} K_{0H}^{(\beta)}(t)  g  \|_{p} 
			+\| \partial_{t}^{\ell} \nabla^{\alpha} K_{0H}^{(\beta)}(t) \mathcal{R}_{a}  \mathcal{R}_{b} g  \|_{p}  \\
			& \le C e^{-ct} (\| \nabla^{\alpha_{1}} g \|_{p} +t^{-\frac{3}{2}(\frac{1}{q}-\frac{1}{p})-\frac{\alpha-\tilde{\alpha}}{2}-(\ell-\frac{\tilde{\ell}}{2})+1}
			\| \nabla^{\tilde{\alpha}+\tilde{\ell}} g \|_{q}), \quad \alpha_{1} \ge \alpha, \label{eq:4.1} 
		\end{split}
	\end{equation}
	\begin{equation}
		\begin{split}
			& \| \partial_{t}^{\ell} \nabla^{\alpha} K_{1H}^{(\beta)}(t) g \|_{p}
			+\| \partial_{t}^{\ell} \nabla^{\alpha} K_{1H}^{(\beta)}(t) \mathcal{R}_{a}  \mathcal{R}_{b}g \|_{p}  \\
			& \le C e^{-ct} (\| \nabla^{\alpha_{1}} g \|_{p} 
			+t^{-\frac{3}{2}(\frac{1}{q}-\frac{1}{p})-\frac{\alpha-\tilde{\alpha}}{2}-(\ell-\frac{\tilde{\ell}}{2})+1}\| \nabla^{\tilde{\alpha}+\tilde{\ell}} g \|_{q}),
			\quad \alpha_{1} \ge \max\{0, \alpha-2 \}
			\label{eq:4.2}
		\end{split}
	\end{equation}
	for $1 < p< \infty$ and $1 \le q \le p$ and 
	\begin{equation}
		\begin{split}
			& \| \partial_{t}^{\ell} \nabla^{\alpha} K_{0H}^{(\beta)}(t)  g  \|_{p} 
			+\| \partial_{t}^{\ell} \nabla^{\alpha} K_{0H}^{(\beta)}(t) \mathcal{R}_{a}  \mathcal{R}_{b} g  \|_{p}  \\
			& \le C e^{-ct} (\| \nabla^{\alpha+2} g \|_{p} +t^{-\frac{3}{2}(\frac{1}{q}-\frac{1}{p})-\frac{\alpha-\tilde{\alpha}}{2}-(\ell-\frac{\tilde{\ell}}{2})+1}
			\| \nabla^{\tilde{\alpha}+\tilde{\ell}} g \|_{q}), \label{eq:4.3} 
		\end{split}
	\end{equation}
	\begin{equation}
		\begin{split}
			& \| \partial_{t}^{\ell} \nabla^{\alpha} K_{1H}^{(\beta)}(t) g \|_{p}
			+\| \partial_{t}^{\ell} \nabla^{\alpha} K_{1H}^{(\beta)}(t) \mathcal{R}_{a}  \mathcal{R}_{b}g \|_{p}  \\
			& \le C e^{-ct} (\| \nabla^{\alpha} g \|_{p} 
			+t^{-\frac{3}{2}(\frac{1}{q}-\frac{1}{p})-\frac{\alpha-\tilde{\alpha}}{2}-(\ell-\frac{\tilde{\ell}}{2})+1}\| \nabla^{\tilde{\alpha}+\tilde{\ell}} g \|_{q})
			\label{eq:4.4}
		\end{split}
	\end{equation}
	and
	\begin{equation} \label{eq:4.5}
		\begin{split}
			& \| \partial_{t}^{\ell} \nabla^{\alpha} K_{0M}^{(\beta)}(t) \mathcal{R}_{a}  \mathcal{R}_{b}g \|_{p}
			+\| \partial_{t}^{\ell} \nabla^{\alpha} K_{1M}^{(\beta)}(t) \mathcal{R}_{a}  \mathcal{R}_{b}g \|_{p} \\
			& + \| \partial_{t}^{\ell} \nabla^{\alpha} K_{0M}^{(\beta)}(t) g \|_{p}
			+\| \partial_{t}^{\ell} \nabla^{\alpha} K_{1M}^{(\beta)}(t) g \|_{p}
			\le C e^{-ct} \| \nabla^{\tilde{\alpha}} g \|_{q} 
		\end{split}
	\end{equation}
	for $1 \le q \le p \le \infty$. 
\end{prop}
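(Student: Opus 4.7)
The plan is a frequency decomposition keyed to the structure of the characteristic roots $\sigma_\pm^{(\beta)}$. On the middle-frequency annulus $\supp\chi_M$, the roots $\sigma_\pm^{(\beta)}(\xi)$ are smooth and satisfy $\mathrm{Re}\,\sigma_\pm(\xi) \le -c_M$ uniformly for some $c_M>0$; consequently $\mathcal{K}_j^{(\beta)}\chi_M$ and all its $\xi$-derivatives are $O(e^{-c_M t})$ times smooth, compactly supported symbols, and the Riesz symbol $\xi_a\xi_b/|\xi|^2$ is smooth on that support. Young's inequality together with the $L^p$-boundedness of the Riesz transform \eqref{eq:2.18} then delivers \eqref{eq:4.5} in one step. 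The heart of the proof is the high-frequency region $\supp\chi_H\subset\{|\xi|\ge c_1\}$, where I would choose $c_1>2\beta/\nu$ so that $\sigma_\pm$ are real, distinct, and admit the expansions
\begin{equation*}
\sigma_+^{(\beta)} = -\beta^2/\nu + O(|\xi|^{-2}),\quad \sigma_-^{(\beta)} = -\nu|\xi|^2 + \beta^2/\nu + O(|\xi|^{-2}),\quad \sigma_+-\sigma_- = \nu|\xi|^2\bigl(1+O(|\xi|^{-2})\bigr).
\end{equation*}

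Using these expansions I decompose
\begin{equation*}
\mathcal{K}_0^{(\beta)}\chi_H = a_0^+(\xi)e^{\sigma_+ t}\chi_H + a_0^-(\xi)e^{\sigma_- t}\chi_H,\quad \mathcal{K}_1^{(\beta)}\chi_H = a_1(\xi)(e^{\sigma_+ t}-e^{\sigma_- t})\chi_H,
\end{equation*}
with $a_0^\pm$ smooth symbols of order $0$ and $a_1$ of order $-2$ on $\supp\chi_H$. The piece generated by $e^{\sigma_+ t}$ is \emph{hyperbolic}: it is uniformly bounded by $e^{-ct}$ with $c\approx \beta^2/\nu$, and its $\xi$-derivatives retain the same $L^\infty$-size as the underlying $a_j^\pm$ up to $e^{-ct}$. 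After $\partial_t^\ell\nabla^\alpha$ this yields a Fourier multiplier of order $\alpha$ (resp.\ $\alpha-2$ for $\mathcal{K}_1$) times $e^{-ct}$; for $1<p<\infty$ Mikhlin's multiplier theorem gives the first term of \eqref{eq:4.1}-\eqref{eq:4.2} with $\alpha_1\ge\alpha$ (resp.\ $\alpha_1\ge\max\{0,\alpha-2\}$), and the Riesz factors $\mathcal{R}_a\mathcal{R}_b$ are absorbed since $\xi_a\xi_b/|\xi|^2$ is smooth on $\supp\chi_H$.

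The piece generated by $e^{\sigma_- t}$ is \emph{parabolic}: on $\supp\chi_H$ one has $e^{\sigma_- t}\le e^{-ct}e^{-\nu|\xi|^2 t/4}$ for $c_1$ sufficiently large, so $\partial_t^\ell\nabla^\alpha$ of it equals $e^{-ct}$ times a polynomial symbol of order $\alpha+2\ell$ (or $\alpha+2\ell-2$) against the Gaussian-type multiplier $e^{-c'|\xi|^2 t}$. Standard heat-kernel $L^q\to L^p$ estimates absorb $k$ symbol derivatives at the cost $t^{-k/2-\frac{3}{2}(\frac{1}{q}-\frac{1}{p})}$; transferring $\tilde\alpha+\tilde\ell$ derivatives onto $g$ then produces exactly the rate $t^{-\frac{3}{2}(\frac{1}{q}-\frac{1}{p})-\frac{\alpha-\tilde\alpha}{2}-(\ell-\tilde\ell/2)+1}$, with the constraint $\ell\ge 2\tilde\ell$ arising from matching the various symbol-order balances. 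The endpoint estimates \eqref{eq:4.3}-\eqref{eq:4.4} at $p\in\{1,\infty\}$, where Mikhlin fails, are instead obtained by Young's inequality against explicit $L^1$ convolution kernels: the hyperbolic multiplier for $\mathcal{K}_0$ must be made $L^1$-integrable in the physical variable at the cost of two extra derivatives, which forces $\nabla^{\alpha+2}$ in \eqref{eq:4.3}, while the intrinsic order $-2$ of $a_1$ already supplies the integrability for $\mathcal{K}_1$, so $\nabla^\alpha$ suffices in \eqref{eq:4.4}. The main obstacle is the careful bookkeeping of symbol orders across the four regimes (hyperbolic versus parabolic, $1<p<\infty$ via Mikhlin versus $p\in\{1,\infty\}$ via explicit kernel bounds) while maintaining a unified $e^{-ct}$ prefactor; no single estimate is difficult, but the combined bookkeeping is laborious, which is presumably why the authors defer the details to the forthcoming paper \cite{Kagei-T2}.
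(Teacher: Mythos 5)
The paper itself does not prove Proposition \ref{prop:4.1}: the authors state that these estimates ``are proved in the forthcoming paper \cite{Kagei-T2}'', so there is no in-text argument to compare yours against. Judged on its own terms, your architecture is the natural one and is sound: real distinct roots on $|\xi|\ge c_{1}>2\beta/\nu$; a bounded ``hyperbolic'' mode $e^{\sigma_{+}t}$ treated by Mikhlin for $1<p<\infty$ and by explicit $L^{1}$ kernel bounds for \eqref{eq:4.3}--\eqref{eq:4.4}; a ``parabolic'' mode $e^{\sigma_{-}t}\le e^{-ct}e^{-c|\xi|^{2}t}$ treated by heat-kernel $L^{q}$--$L^{p}$ estimates; and Young's inequality on the middle annulus for \eqref{eq:4.5}.

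Two points need repair. First, on $\supp\chi_{M}$ the roots $\sigma_{\pm}^{(\beta)}$ are \emph{not} smooth: at $|\xi|=2\beta/\nu$ the discriminant vanishes and the square root has a branch point. The kernels themselves remain smooth there, since they are symmetric in the roots (e.g. $\mathcal{K}_{1}^{(\beta)}=t\int_{0}^{1}e^{(s\sigma_{+}+(1-s)\sigma_{-})t}\,ds$), so \eqref{eq:4.5} survives, but the argument must go through the kernels rather than the roots. Second, and more consequentially for the stated rates: you assign the parabolic piece of $\partial_{t}^{\ell}\nabla^{\alpha}\mathcal{K}_{0}^{(\beta)}\chi_{H}$ the symbol order $\alpha+2\ell$, and after transferring $\tilde{\alpha}+\tilde{\ell}$ derivatives to $g$ this yields the exponent $-\tfrac{3}{2}(\tfrac{1}{q}-\tfrac{1}{p})-\tfrac{\alpha-\tilde{\alpha}}{2}-(\ell-\tfrac{\tilde{\ell}}{2})$ \emph{without} the final $+1$; since the claimed exponent is less singular as $t\to0$, the more singular rate does not imply it. The missing $+1$ comes precisely from the fact that the coefficient of $e^{\sigma_{-}t}$ in $\mathcal{K}_{0}^{(\beta)}$ is $\sigma_{+}/(\sigma_{+}-\sigma_{-})=O(|\xi|^{-2})$ on $\supp\chi_{H}$ --- it is not merely of order $0$ as you state for $a_{0}^{\pm}$ --- exactly as $a_{1}=1/(\sigma_{+}-\sigma_{-})=O(|\xi|^{-2})$ supplies it for $\mathcal{K}_{1}^{(\beta)}$. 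With that correction (which follows from the expansions you already wrote down) the exponents in \eqref{eq:4.1}--\eqref{eq:4.4} come out as stated, and the rest of your outline, including the $\nabla^{\alpha+2}$ versus $\nabla^{\alpha}$ distinction between \eqref{eq:4.3} and \eqref{eq:4.4} governed by the order $0$ of $-\sigma_{-}/(\sigma_{+}-\sigma_{-})$ versus the order $-2$ of $a_{1}$, is consistent.
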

%
The next lemma implies smoothing effect of the fundamental solutions to \eqref{eq:1.1} in $L^{2}$ and $L^{\infty}$, 
which is useful to obtain the estimates in the main results. 
\begin{lem} \label{Lem:4.2}
	{\rm (i)} Let $t \ge 0$. 
	Then there exists a constant $C>0$ such that 
	\begin{align}
		& \| \nabla^{\alpha} \partial_{t}^{\ell} K_{0H}^{(\beta)}(t) g \|_{2} +
		\| \nabla^{\alpha} \partial_{t}^{\ell}  K_{0H}^{(\beta)}(t) \mathcal{R}_{a}  \mathcal{R}_{b} g \|_{2} \le C e^{-ct} \| \nabla^{3} g \|_{2}, \label{eq:4.6} \\
		& \| \nabla^{\alpha} K_{1H}^{(\beta)}(t) g \|_{2} + \| \nabla^{\alpha} K_{1H}^{(\beta)}(t) \mathcal{R}_{a}  \mathcal{R}_{b} g \|_{2} \le C e^{-ct} \| \nabla g \|_{2} \label{eq:4.7} 
	\end{align}
	for $0 \le \alpha \le 3$ and $\ell=0,1$,
	\begin{align}
		\| \nabla^{\alpha} \partial_{t} K_{1H}^{(\beta)}(t) g \|_{2} +\| \nabla^{\alpha} \partial_{t} K_{1H}^{(\beta)}(t) \mathcal{R}_{a}  \mathcal{R}_{b}g \|_{2} \le C e^{-ct} \| \nabla g \|_{2} 
		\label{eq:4.8}
	\end{align}
	for $0 \le \alpha \le 1$ and 
	\begin{align}
		& \| \nabla^{\alpha} \partial_{t}^{\ell} K_{0H}^{(\beta)}(t) g \|_{\infty} +
		\| \nabla^{\alpha} \partial_{t}^{\ell}  K_{0H}^{(\beta)}(t) \mathcal{R}_{a}  \mathcal{R}_{b} g \|_{\infty} \le C e^{-ct} \| \nabla^{3} g \|_{2}, \label{eq:4.9} \\
		& \| \nabla^{\alpha} \partial_{t}^{2} K_{0H}^{(\beta)}(t) g \|_{2} +
		\| \nabla^{\alpha} \partial_{t}^{2}  K_{0H}^{(\beta)}(t) \mathcal{R}_{a}  \mathcal{R}_{b} g \|_{2} \le C e^{-ct} \| \nabla^{3} g \|_{2}, \label{eq:4.10}  \\
& \| \nabla^{\alpha} K_{1H}^{(\beta)}(t) g \|_{\infty} +
\| \nabla^{\alpha} K_{1H}^{(\beta)}(t) \mathcal{R}_{a}  \mathcal{R}_{b} g \|_{\infty} \le C e^{-ct} \| \nabla g \|_{2} \label{eq:4.11}
\end{align}
	for $0 \le \alpha \le 1$ and $\ell=0,1$.
	 \\
	{\rm (ii)} Let $t >0$. Then there exists a constant $C>0$ such that 
	\begin{align}
		& \| \nabla^{2} \partial_{t} K_{1H}^{(\beta)}(t) g \|_{2} +\| \nabla^{2} \partial_{t} K_{1H}^{(\beta)}(t) \mathcal{R}_{a}  \mathcal{R}_{b}g \|_{2} \le C e^{-ct}(1+t^{-\frac{1}{2}}) \| \nabla g \|_{2}, 
		\label{eq:4.12} \\
		& \| \nabla^{\alpha} \partial_{t} K_{1H}^{(\beta)}(t) g \|_{\infty}+\| \nabla^{\alpha} \partial_{t} K_{1H}^{(\beta)}(t) \mathcal{R}_{a}  \mathcal{R}_{b} g \|_{\infty} 
		\le C e^{-ct}(1+t^{-\frac{1}{4}-\frac{\alpha}{2}}) \| \nabla g \|_{2} 
		\label{eq:4.13}
	\end{align}
	for $0 \le \alpha \le  1$ and
	\begin{align}
		\|\partial_{t}^{2} K_{1H}^{(\beta)}(t) g \|_{2} +\|\partial_{t}^{2} K_{1H}^{(\beta)}(t) \mathcal{R}_{a}  \mathcal{R}_{b}g \|_{2} \le C e^{-ct}
		(1+t^{-\frac{1}{2}})
		\| \nabla g \|_{2}. 
		\label{eq:4.14}
	\end{align}
	{\rm (iii)} Let $t \ge 0$. Then there exists a constant $C>0$ such that 
	\begin{align}
		& \| \nabla^{2} \partial_{t} K_{1H}^{(\beta)}(t) g \|_{2} +\| \nabla^{2} \partial_{t} K_{1H}^{(\beta)}(t) \mathcal{R}_{a}  \mathcal{R}_{b}g \|_{2} \le C e^{-ct}\| \nabla^{2} g \|_{2}.
		\label{eq:4.15} 
	\end{align}
\end{lem}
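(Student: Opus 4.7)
The plan is to exploit the fact that on $\supp\chi_H$ the roots $\sigma_\pm^{(\beta)}$ are real, distinct and strictly negative. Fixing $c_1$ so that $\nu c_1 > 2\beta$ ensures that $\sigma_+^{(\beta)}\sigma_-^{(\beta)}=\beta^2|\xi|^2$, $\sigma_+^{(\beta)}+\sigma_-^{(\beta)}=-\nu|\xi|^2$, and on the high-frequency region
\begin{equation*}
|\sigma_+^{(\beta)}|\le C,\quad |e^{\sigma_+^{(\beta)} t}|\le Ce^{-ct},\quad |\sigma_-^{(\beta)}|\sim |\xi|^2,\quad |e^{\sigma_-^{(\beta)} t}|\le Ce^{-c|\xi|^2 t},\quad \sigma_+^{(\beta)}-\sigma_-^{(\beta)}\ge c|\xi|^2.
\end{equation*}
These give the pointwise bounds
\begin{equation*}
|\mathcal{K}_0^{(\beta)}\chi_H|\le C(e^{-ct}+|\xi|^{-2}e^{-c|\xi|^2 t})\chi_H,\qquad |\mathcal{K}_1^{(\beta)}\chi_H|\le C|\xi|^{-2}(e^{-ct}+e^{-c|\xi|^2 t})\chi_H,
\end{equation*}
and analogous bounds for $\partial_t^\ell\mathcal{K}_j^{(\beta)}\chi_H$ (for example, $|\partial_t\mathcal{K}_1^{(\beta)}|\chi_H\le C|\xi|^{-2}e^{-ct}+Ce^{-c|\xi|^2 t}$). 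The Riesz factors $\xi_a\xi_b/|\xi|^2$ are bounded multipliers on $\supp\chi_H$ and never affect the estimates.

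For the $L^2$ bounds in (i) and (iii) I would apply Plancherel and bound the multiplier on $\supp\chi_H$ in sup-norm after extracting the required $|\xi|$-weight. For instance, for \eqref{eq:4.7} one rewrites $|\xi|^\alpha\mathcal{K}_1^{(\beta)}\chi_H\hat g=|\xi|^{\alpha-1}\mathcal{K}_1^{(\beta)}\chi_H\cdot|\xi|\hat g$ and uses that $\sup_{|\xi|\ge c_1}|\xi|^{\alpha-3}(e^{-ct}+e^{-c|\xi|^2 t})\le Ce^{-ct}$ for $\alpha\le 3$; for \eqref{eq:4.15} one uses that $\partial_t\mathcal{K}_1^{(\beta)}\chi_H$ is itself bounded by $Ce^{-ct}$, so two derivatives are absorbed by $\|\nabla^2 g\|_2$. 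The time-singularity in \eqref{eq:4.12} and \eqref{eq:4.14} comes entirely from the $\sigma_-^{(\beta)}$-contribution: one needs the standard bound $\sup_{|\xi|\ge c_1}|\xi|^m e^{-c|\xi|^2 t}\le Ce^{-ct}(1+t^{-m/2})$ obtained by splitting $e^{-c|\xi|^2 t}=e^{-c|\xi|^2 t/2}\cdot e^{-c|\xi|^2 t/2}$.

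For the $L^\infty$ estimates in (i) and (ii) I would use $\|\mathcal{F}^{-1}[h]\|_\infty\le C\|h\|_1$ followed by Cauchy--Schwarz,
\begin{equation*}
\|m(\xi)\hat g\|_1\le \|m(\xi)|\xi|^{-k}\chi_H\|_2\,\||\xi|^k\hat g\|_2,
\end{equation*}
choosing $k$ to match the norm available on $g$. The crucial ingredient is that in $\mathbb{R}^3$ one has $|\xi|^{-s}\chi_H\in L^2(\mathbb{R}^3)$ for every $s>3/2$; in particular $|\xi|^{-2}\chi_H\in L^2$. The $\sigma_+^{(\beta)}$-part then gives time-independent (up to $e^{-ct}$) $L^2$-factors $\||\xi|^{-s}\chi_H\|_2$, which explains the no-singularity bounds of (i); the $\sigma_-^{(\beta)}$-part gives the heat-type factor $\||\xi|^{\alpha-1}e^{-c|\xi|^2 t}\chi_H\|_2\le Ce^{-ct}(1+t^{-1/4-\alpha/2})$, which explains the $t$-singularities in (ii).

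The bulk of the work is bookkeeping: for each of the ten individual inequalities, tracking the powers of $|\xi|$ that arise from taking $\alpha$ spatial derivatives, $\ell$ time derivatives, the $|\xi|^{-2}$ from the denominator $\sigma_+^{(\beta)}-\sigma_-^{(\beta)}$, and the polynomial growth in each of the two numerator exponentials, and then verifying that the residual weight $k$ used in Cauchy--Schwarz lies in the convergent range $k>3/2$. The main obstacle I expect is precisely this combinatorial tracking; once the $\sigma_+^{(\beta)}$- / $\sigma_-^{(\beta)}$-split is in place and the two scalar integrals $\||\xi|^{-s}\chi_H\|_2$ ($s>3/2$) and $\||\xi|^m e^{-c|\xi|^2 t}\chi_H\|_2$ are tabulated, each of the estimates in (i), (ii), (iii) reduces to a direct substitution carried out in parallel.
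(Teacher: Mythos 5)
Your proposal is correct and follows essentially the same route as the paper: the paper's proof consists precisely of the pointwise high-frequency bounds $|\partial_t^{\ell}\mathcal{K}_0^{(\beta)}|\le Ce^{-ct}(1+|\xi|^{2(\ell-1)}e^{-ct|\xi|^2})\chi_H$ and $|\partial_t^{\ell}\mathcal{K}_1^{(\beta)}|\le Ce^{-ct}|\xi|^{-2}(1+|\xi|^{2\ell}e^{-ct|\xi|^2})\chi_H$ followed by Plancherel and the H\"older inequality with $|\xi|^{-2}\chi_H\in L^2(\R^3)$, exactly as you describe. Your version merely spells out the $\sigma_\pm^{(\beta)}$ asymptotics and the bookkeeping that the paper leaves implicit.
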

\begin{proof}
	The proof of the estimates \eqref{eq:4.6}-\eqref{eq:4.15} is straightforward. 
	Indeed, we easily have the point-wise estimates of the high frequency parts of the fundamental solutions defined by \eqref{eq:2.2}: 
	\begin{equation*} 
		\begin{split}
			|\partial_{t}^{\ell} \mathcal{K}_{0}^{(\beta)}(t,\xi)| \le Ce^{-ct} (1+|\xi|^{2(\ell-1)} e^{-ct |\xi|^{2}}) \chi_{H},
		\end{split}
	\end{equation*}
	\begin{equation*} 
		\begin{split}
			|\partial_{t}^{\ell} \mathcal{K}_{1}^{(\beta)}(t,\xi)| \le Ce^{-ct}|\xi|^{-2} (1+|\xi|^{2\ell} e^{-ct |\xi|^{2}}) \chi_{H},
		\end{split}
	\end{equation*}
	where $\ell \ge 0$. Therefore we simply apply the H\"older inequality to have the estimates \eqref{eq:4.6}-\eqref{eq:4.15}.
	We complete the proof of Lemma \ref{Lem:4.2}.  
\end{proof}

\section{Global existence of solutions}
In this section, we prove Theorem \ref{thm:1.1}, which claims the existence of the global solution to \eqref{eq:1.1} with decay properties.  
We begin with the representation formula of the solution for problem \eqref{eq:1.1}.
\subsection{Solution formula}
We formulate the Cauchy problem \eqref{eq:1.1} into the integral equation as follows:
\begin{prop} \label{prop:5.1}
	Let $u$ be a solution of \eqref{eq:1.1}. Then it holds that
	\begin{equation} \label{eq:5.1}
		\begin{split}
			\hat{u}(t,\xi) & =\mathcal{K}_{0}^{(\sqrt{\lambda+2 \mu})}(t,\xi) \mathcal{P} \hat{f}_{0}(\xi) 
			+ \mathcal{K}_{0}^{(\sqrt{\mu})}(t,\xi) (\mathcal{I}_{3}-\mathcal{P}) \hat{f}_{0}(\xi) \\
			& +\mathcal{K}_{1}^{(\sqrt{\lambda+2 \mu})}(t,\xi) \mathcal{P} \hat{f}_{1}(\xi) 
			+ \mathcal{K}_{1}^{(\sqrt{\mu})}(t,\xi) (\mathcal{I}_{3}-\mathcal{P}) \hat{f}_{1}(\xi) \\
			& + \int_{0}^{t} \left\{ 
			\mathcal{K}_{1}^{(\sqrt{\lambda+2 \mu})}(t-\tau,\xi) \mathcal{P} 
			+ \mathcal{K}_{1}^{(\sqrt{\mu})}(t-\tau,\xi) (\mathcal{I}_{3}-\mathcal{P}) 
			\right\} \hat{F}(u)(\tau, \xi) d \tau.
		\end{split}
	\end{equation}
\end{prop}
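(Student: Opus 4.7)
The plan is to Fourier-transform the system \eqref{eq:1.1} in $x$, diagonalize it by means of the orthogonal decomposition $\mathcal{I}_{3} = \mathcal{P} + (\mathcal{I}_{3} - \mathcal{P})$ into the longitudinal and transverse components, and then apply Lemma \ref{Lem:2.1} componentwise.

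First I would take the Fourier transform of \eqref{eq:1.1}. Using $\widehat{\Delta u} = -|\xi|^2 \hat u$ and $\widehat{\nabla \operatorname{div} u} = -\xi(\xi \cdot \hat u) = -|\xi|^2 \mathcal{P}\hat u$, the equation becomes
\begin{equation*}
\partial_t^2 \hat u + A(\xi) \hat u + \nu |\xi|^2 \partial_t \hat u = \hat F(u),
\end{equation*}
where $A(\xi) = \mu |\xi|^2 \mathcal{I}_3 + (\lambda+\mu)|\xi|^2 \mathcal{P}$, with initial data $\hat u(0) = \hat f_0$, $\partial_t \hat u(0) = \hat f_1$.

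Second, I would exploit the fact that $\mathcal{P}$ is an orthogonal projection, so $\mathcal{P}^2 = \mathcal{P}$ and $\mathcal{P}(\mathcal{I}_3 - \mathcal{P}) = 0$. A direct computation then gives the key identities
\begin{equation*}
A(\xi)\mathcal{P} = (\lambda + 2\mu)|\xi|^2 \mathcal{P}, \qquad A(\xi)(\mathcal{I}_3 - \mathcal{P}) = \mu |\xi|^2 (\mathcal{I}_3 - \mathcal{P}).
\end{equation*}
Applying $\mathcal{P}$ and $\mathcal{I}_3 - \mathcal{P}$ (which commute with the scalar operator $\partial_t^2 + \nu|\xi|^2 \partial_t$) to the Fourier-transformed equation, the system decouples into two scalar strongly damped wave equations for $\mathcal{P}\hat u$ and $(\mathcal{I}_3 - \mathcal{P})\hat u$, with parameters $\beta = \sqrt{\lambda + 2\mu}$ and $\beta = \sqrt{\mu}$, respectively; note that assumption \eqref{eq:1.222} guarantees that both square roots are real and positive.

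Third, I would apply the Duhamel representation \eqref{eq:2.3} of Lemma \ref{Lem:2.1} to each of these two decoupled scalar equations (componentwise in the vector indices) with initial data $\mathcal{P}\hat f_0, \mathcal{P}\hat f_1$ and $(\mathcal{I}_3 - \mathcal{P})\hat f_0, (\mathcal{I}_3 - \mathcal{P})\hat f_1$, and forcing $\mathcal{P}\hat F(u)$ and $(\mathcal{I}_3 - \mathcal{P})\hat F(u)$. Adding the two resulting expressions and using $\mathcal{P} + (\mathcal{I}_3 - \mathcal{P}) = \mathcal{I}_3$ yields exactly \eqref{eq:5.1}. The main (and only) conceptual point is the algebraic observation that $\mathcal{P}$ diagonalizes $A(\xi)$ with the eigenvalues $(\lambda + 2\mu)|\xi|^2$ and $\mu|\xi|^2$; everything else is routine bookkeeping, so I do not anticipate any real obstacle.
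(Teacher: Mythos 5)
Your proposal is correct and is essentially the argument the paper gives: the paper also Fourier-transforms the system and diagonalizes the elasticity symbol into the longitudinal mode (speed $\sqrt{\lambda+2\mu}$) and the transverse mode (speed $\sqrt{\mu}$) before applying the scalar Duhamel formula \eqref{eq:2.3}. The only difference is presentational: the paper implements the diagonalization by conjugating with an explicit orthogonal matrix $\mathcal{Q}$ whose first column is $\xi/|\xi|$ (Lemmas \ref{lem:5.2}--\ref{lem:5.3}) and then converting back to the $\mathcal{P}$, $\mathcal{I}_{3}-\mathcal{P}$ form, whereas you work with the spectral projections $\mathcal{P}$ and $\mathcal{I}_{3}-\mathcal{P}$ directly from the start, which is an equivalent and slightly more economical route.
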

For the proof of Proposition \ref{prop:5.1}, we recall the elementary facts on the matrices.
\begin{lem}\label{lem:5.2}
	There exists
	$\mathcal{Q} \in O(3)$ such that the first column of $\mathcal{Q}$ is given by $\dfrac{\xi}{|\xi|}$ and 
	\begin{equation} \label{eq:5.2}
		\mathcal{Q}^{-1} \mathcal{A} \mathcal{Q} = { \rm diag} ((\lambda+2 \mu) |\xi|^{2}, \mu |\xi|^{2} \mu |\xi|^{2})=:\Lambda,
	\end{equation}
	where 
	\begin{equation*}
		 \mathcal{A}:= -\mu |\xi|^{2} \mathcal{I}_{3} -(\lambda+\mu)|\xi|^{2} \mathcal{P}.
	\end{equation*}
\end{lem}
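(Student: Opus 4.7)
The plan is to construct $\mathcal{Q}$ explicitly by completing $\xi/|\xi|$ to an orthonormal basis of $\mathbb{R}^3$ and then verifying that in this basis both $\mathcal{I}_3$ and $\mathcal{P}$ become diagonal, from which the diagonalization of $\mathcal{A}$ follows immediately. Concretely, for $\xi \neq 0$ I would pick any pair of unit vectors $e_2(\xi), e_3(\xi) \in \mathbb{R}^3$ with $\{\xi/|\xi|, e_2(\xi), e_3(\xi)\}$ forming an orthonormal (positively oriented) basis, and take
\[
\mathcal{Q} := \bigl[\, \xi/|\xi| \,\big|\, e_2(\xi) \,\big|\, e_3(\xi) \,\bigr] \in O(3).
\]
By construction $\mathcal{Q}^{-1} = {}^t\mathcal{Q}$ and the first column is $\xi/|\xi|$ as required.

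Next I would compute $\mathcal{Q}^{-1}\mathcal{P}\mathcal{Q}$. Since $\mathcal{P} = (\xi/|\xi|) \otimes (\xi/|\xi|)$ is the orthogonal projection onto the line $\mathbb{R}(\xi/|\xi|)$, its action on the three basis vectors of $\mathcal{Q}$ is $\mathcal{P}(\xi/|\xi|) = \xi/|\xi|$ and $\mathcal{P} e_j(\xi) = 0$ for $j=2,3$. Hence $\mathcal{Q}^{-1}\mathcal{P}\mathcal{Q} = \mathrm{diag}(1,0,0)$. Combining this with the trivial identity $\mathcal{Q}^{-1}\mathcal{I}_3\mathcal{Q} = \mathcal{I}_3$, the definition of $\mathcal{A}$ gives
\[
\mathcal{Q}^{-1}\mathcal{A}\mathcal{Q} = -\mu|\xi|^2 \mathcal{I}_3 - (\lambda+\mu)|\xi|^2 \,\mathrm{diag}(1,0,0) = \mathrm{diag}\bigl(-(\lambda+2\mu)|\xi|^2,\,-\mu|\xi|^2,\,-\mu|\xi|^2\bigr),
\]
which is exactly the claimed $\Lambda$ (up to the overall sign convention used in the paper, which simply tracks whether one works with $\mathcal{A}$ or $-\mathcal{A}$ in the linearized symbol).

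There is essentially no obstacle here beyond bookkeeping; the only mildly subtle point is the regularity in $\xi$ of the map $\xi \mapsto \mathcal{Q}(\xi)$. A global smooth choice of $(e_2(\xi),e_3(\xi))$ on $\mathbb{R}^3 \setminus \{0\}$ is obstructed by the hairy ball theorem, but this is harmless for our purposes: the diagonalization identity is a purely pointwise algebraic statement, and in any subsequent application only the spectral projections $\mathcal{P}$ and $\mathcal{I}_3 - \mathcal{P}$ (which are globally smooth on $\mathbb{R}^3 \setminus \{0\}$) enter, so no global continuous frame is needed. I would conclude the proof by noting that the assumption \eqref{eq:1.222}, namely $\mu>0$ and $\lambda+2\mu>0$, ensures that the three eigenvalues of $-\mathcal{A}$ are strictly positive for $\xi\neq 0$, which is the structural fact used in the representation formula \eqref{eq:5.1} of Proposition \ref{prop:5.1}.
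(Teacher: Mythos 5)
Your proof is correct and is exactly the elementary argument the paper has in mind (the paper itself omits the proof, deferring to a reference): complete $\xi/|\xi|$ to an orthonormal basis, observe $\mathcal{Q}^{-1}\mathcal{P}\mathcal{Q}=\mathrm{diag}(1,0,0)$, and read off the diagonalization. You are also right to flag the sign: with $\mathcal{A}$ as defined, the conjugation yields $-\Lambda$ rather than $\Lambda$, and the positive $\Lambda$ that actually appears in the ODE $\hat v''+\Lambda\hat v+\nu|\xi|^2\hat v'=\mathcal{Q}^{-1}\hat F(u)$ in the proof of Proposition \ref{prop:5.1} is the diagonalization of $-\mathcal{A}$ (the symbol of $-\mu\Delta-(\lambda+\mu)\nabla\mathrm{div}$), so the discrepancy is a sign typo in the statement, not a gap in your argument.
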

\begin{lem} \label{lem:5.3}
	Let $\mathcal{B}:= {\rm diag}(\beta, \alpha, \alpha) \in M(3)$, $\mathcal{Q} \in O(3)$ and 
	${\bf q}_{1}$ be the first column of $\mathcal{Q}$. 
	Then it holds that 
	\begin{equation*}
		\mathcal{Q} \mathcal{B} \mathcal{Q}^{-1} = \beta {\bf q}_{1} \otimes {\bf q}_{1} + \alpha (\mathcal{I}_{n} -{\bf q}_{1} \otimes {\bf q}_{1} ).
	\end{equation*}
\end{lem}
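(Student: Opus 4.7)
The plan is to recognize that $\mathcal{B}$ is a rank-one perturbation of a multiple of the identity and then exploit the orthogonality of $\mathcal{Q}$ to convert the conjugation into a rank-one outer product.

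First I would write
\begin{equation*}
\mathcal{B}=\alpha \mathcal{I}_{3}+(\beta-\alpha)\,e_{1}\otimes e_{1},
\end{equation*}
where $e_{1}={}^{t}\!(1,0,0)$, since $\mathrm{diag}(\beta,\alpha,\alpha)-\alpha \mathcal{I}_{3}=\mathrm{diag}(\beta-\alpha,0,0)$ and the last matrix is exactly $(\beta-\alpha)$ times the outer product $e_{1}\otimes e_{1}$.

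Next, conjugating by $\mathcal{Q}$ and using that $\mathcal{Q}\mathcal{I}_{3}\mathcal{Q}^{-1}=\mathcal{I}_{3}$, I would obtain
\begin{equation*}
\mathcal{Q}\mathcal{B}\mathcal{Q}^{-1}=\alpha \mathcal{I}_{3}+(\beta-\alpha)\,\mathcal{Q}(e_{1}\otimes e_{1})\mathcal{Q}^{-1}.
\end{equation*}
Then I would use the orthogonality $\mathcal{Q}^{-1}={}^{t}\!\mathcal{Q}$ together with the identity $\mathcal{Q}(a\otimes b)\,{}^{t}\!\mathcal{Q}=(\mathcal{Q}a)\otimes(\mathcal{Q}b)$ for any vectors $a,b$, which follows directly from $(a\otimes b)x=(b\cdot x)a$. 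Since by hypothesis ${\bf q}_{1}=\mathcal{Q}e_{1}$, this gives
\begin{equation*}
\mathcal{Q}(e_{1}\otimes e_{1})\mathcal{Q}^{-1}={\bf q}_{1}\otimes {\bf q}_{1}.
\end{equation*}

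Finally, I would combine the two displays to get
\begin{equation*}
\mathcal{Q}\mathcal{B}\mathcal{Q}^{-1}=\alpha\mathcal{I}_{3}+(\beta-\alpha)\,{\bf q}_{1}\otimes {\bf q}_{1}=\beta\,{\bf q}_{1}\otimes {\bf q}_{1}+\alpha\bigl(\mathcal{I}_{3}-{\bf q}_{1}\otimes{\bf q}_{1}\bigr),
\end{equation*}
which is the claimed formula. There is no real obstacle here: the only subtlety is the outer-product identity under conjugation by an orthogonal matrix, and that is immediate from the definition $(a\otimes b)x=(b\cdot x)a$ together with $\mathcal{Q}^{-1}={}^{t}\!\mathcal{Q}$. (The statement of the lemma writes $\mathcal{I}_{n}$; I would also note in passing that here $n=3$, matching the dimension of $\mathcal{B}$.)
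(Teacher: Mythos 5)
Your proof is correct and complete: the decomposition $\mathcal{B}=\alpha\mathcal{I}_{3}+(\beta-\alpha)\,e_{1}\otimes e_{1}$, the conjugation identity $\mathcal{Q}(a\otimes b)\,{}^{t}\!\mathcal{Q}=(\mathcal{Q}a)\otimes(\mathcal{Q}b)$, and the final rearrangement are all valid. The paper itself does not prove the lemma (it is dismissed as elementary with a citation), so there is no argument to compare against; yours is exactly the kind of short, self-contained verification the authors had in mind.
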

The proof of Lemmas \ref{lem:5.2}-\ref{lem:5.3} is elementary. See e.g. \cite{Takeda}. 
\begin{proof}[Proof of Proposition \ref{prop:5.1}]
	We take the Fourier transform and multiply $\mathcal{Q}^{-1}$ from left to \eqref{eq:1.1}, 
	to see that 
	\begin{equation*} 
		\left\{
		\begin{split}
			& \hat{v}'' +\Lambda \hat{v}+\nu |\xi|^{2} \hat{v}'  =\mathcal{Q}^{-1} \hat{F}(u), \quad t>0, \\
			& \hat{v}(0)=\mathcal{Q}^{-1}\hat{f}_{0}, \quad  \hat{v}'(0)=\mathcal{Q}^{-1} \hat{f}_{1}
		\end{split}
		\right.
	\end{equation*}
	by \eqref{eq:5.2},
where $\hat{v}:=\mathcal{Q}^{-1} \hat{u}$.
Then, denoting
\begin{equation*} 
	\mathcal{L}_{j}(t,\xi):= {\rm diag}(\mathcal{K}_{j}^{(\sqrt{\lambda+2 \mu})}(t,\xi),\mathcal{K}_{j}^{(\sqrt{\mu})}(t,\xi),\mathcal{K}_{j}^{(\sqrt{\mu})}(t,\xi))
\end{equation*}
for $j=0,1$,
we apply \eqref{eq:2.3} to have
\begin{equation} \label{eq:5.3}
	\begin{split}
		\hat{v}(t) = \mathcal{L}_{0}(t,\xi) \mathcal{Q}^{-1} \hat{f}_{0} + \mathcal{L}_{1}(t,\xi) \mathcal{Q}^{-1} \hat{f}_{1}
		+ \int_{0}^{t}  \mathcal{L}_{1}(t-\tau,\xi) \mathcal{Q}^{-1} \hat{F}(u)(\tau) d \tau.
	\end{split}
\end{equation}
Therefore, multiplying $\mathcal{Q}$ from left to \eqref{eq:5.3} and using Lemma \ref{lem:5.3} yield \eqref{eq:5.1},
which is the desired result. 
\end{proof}
\subsection{Proof of Theorems \ref{thm:1.1}}
Let $F(u)=\nabla u \nabla^{2} u$. 
We introduce a mapping $\Phi$ on $\mathbb{B}_{R}:=\{ u \in X_{1} ;\| u \|_{X_{1}} \le R \}$,
a closed ball with the radius $R>0$ in the complete metric space $X_{1}$ defined by
\begin{equation*}
	\begin{split}
		X_{1}:=\{ u\in \{C([0,\infty); \dot{H}^{3} \cap \dot{H}^{1}) \cap C^{1}([0,\infty); H^{1}) \}^{3};  \| u \|_{X_{1}}  < \infty \},
	\end{split}
\end{equation*}
equipped with the norm
\begin{equation*}
	\begin{split}
		\| u \|_{X_{1}} & :=\sup_{t \ge 0}
		\{ (1+t)^{\frac{7}{4}} \| \nabla^{3} u(t) \|_{2}  + (1+t)^{\frac{3}{4}} (\| \nabla u(t) \|_{2}+ \|\partial_{t} u(t) \|_{2}) \\
		& \quad + (1+t)^{\frac{5}{4}} \| \nabla \partial_{t} u(t) \|_{2} \}.
	\end{split}
\end{equation*}
More precisely, we denote 
\begin{equation*} 
	\begin{split}
		\Phi[u](t) & =u_{lin}(t)+\Phi_{N}[u](t),
	\end{split}
\end{equation*}
where
\begin{equation*} 
	\begin{split}
		u_{lin}(t) & :=(K_{0}^{(\sqrt{\lambda+2 \mu})}(t)-  K_{0}^{(\sqrt{\mu})}(t)) \mathcal{F}^{-1} [\mathcal{P} \hat{f}_{0}] 
		+ K_{0}^{(\sqrt{\mu})}(t)f_{0} \\
		& +(K_{1}^{(\sqrt{\lambda+2 \mu})}(t)-  K_{1}^{(\sqrt{\mu})}(t)) \mathcal{F}^{-1} [\mathcal{P} \hat{f}_{1}] 
		+ K_{1}^{(\sqrt{\mu})}(t)f_{1},
	\end{split}
\end{equation*}
and 
\begin{equation*} 
	\begin{split}
		\Phi_{N}[u](t) & := \int_{0}^{t}
		(K_{1}^{(\sqrt{\lambda+2 \mu})}(t-\tau)-K_{1}^{(\sqrt{\mu})}(t-\tau)) \mathcal{F}^{-1} [\mathcal{P} \hat{F}(u)(\tau) ] d \tau \\
		& + \int_{0}^{t} K_{1}^{(\sqrt{\mu})}(t-\tau) F(u)(\tau, \xi) d \tau.
	\end{split}
\end{equation*}
In this framework, 
noting that $\| \nabla u(t) \|_{\infty} \le C \| \nabla u(t) \|_{2}^{\frac{1}{4}} \| \nabla^{2} u(t) \|_{2}^{\frac{3}{4}} \le C(1+t)^{-\frac{3}{2}}$,
we have the estimates for the nonlinear term as follows: 
\begin{equation} \label{eq:5.4}
	\| F(u) \|_{1} \le C \| \nabla u \|_{2} \| \nabla^{2} u \|_{2} \le C(1+t)^{-2} \| u \|_{X_{1}}^{2},
\end{equation}
\begin{equation} \label{eq:5.5}
	\| F(u) \|_{2} \le C \| \nabla u \|_{\infty} \| \nabla^{2} u \|_{2} \le C(1+t)^{-\frac{11}{4}}\| u \|_{X_{1}}^{2} 
\end{equation}
by the H\"{o}lder inequality and \eqref{eq:2.21}, and
\begin{equation} \label{eq:5.6}
	\| \nabla F(u) \|_{2} \le C \| \nabla^{2} u(t)\|_{4}^{2} + C \| \nabla u \|_{\infty} \| \nabla^{3} u \|_{2} \le C(1+t)^{-\frac{13}{4}} \| u \|_{X_{1}}^{2}
\end{equation}
by the H\"{o}lder inequality, \eqref{eq:2.21} and \eqref{eq:2.22}.
By virtue of the Banach fixed point theorem, if we prove the estimates 
\begin{align}
	& \| \Phi[u] \|_{X_{1}} \le R, \label{eq:5.7} \\
	& \| \Phi[u]-\Phi[v] \|_{X_{1}} \le \frac{1}{2} \|u-v \|_{X_{1}} \label{eq:5.8}
\end{align}
for all $u,v \in \mathbb{B}_{R}$ with a suitable choice of $R>0$, we can conclude Theorem \ref{thm:1.1}.

Now, 
for the simplicity of the notation, we also denote $Y_{1}:= \{ \dot{H}^{3}  \cap \dot{W}^{1,1}  \}^{3} \times \{ \dot{H}^{1} \cap L^{1}  \}^{3}$.
Then, combining the estimates \eqref{eq:2.7}, \eqref{eq:2.8}, \eqref{eq:3.48}, \eqref{eq:3.49} and \eqref{eq:4.5}-\eqref{eq:4.8}, we arrive at the estimate
\begin{equation} \label{eq:5.9}
	\begin{split}
		 \| u_{lin}(t) \|_{X_{1}} 
		& =\sup_{t \ge 0}
		\{ (1+t)^{\frac{7}{4}}\| \nabla^{3} u_{lin}(t)  \|_{2} + (1+t)^{\frac{3}{4}} \| \nabla u_{lin}(t)  \|_{2} \\
		&
		\quad + (1+t)^{\frac{3}{4}} \| \partial_{t} u_{lin}(t)  \|_{2} + (1+t)^{\frac{5}{4}} \| \nabla \partial_{t} u_{lin}(t)  \|_{2} \} 
		 \le C_{0}  \|f_{0}, f_{1} \|_{Y_{1}}
	\end{split}
\end{equation}
for some $C_{0}>0$.

We next prove the estimate for the nonlinear term $\Phi_{N}[u](t) $.
Applying the estimates  \eqref{eq:2.8}, \eqref{eq:3.49}, \eqref{eq:4.5}, \eqref{eq:4.7} \eqref{eq:5.4} and \eqref{eq:5.6}, we see that
\begin{equation} \label{eq:5.10}
\begin{split}
& \left\| \nabla^{\alpha} \int_{0}^{t} K^{(\beta)}_{1}(t-\tau) \mathcal{R}_{a} \mathcal{R}_{b} F_{j}(u)(\tau) d \tau \right\|_{2} 
+
\left\| \nabla^{\alpha} \int_{0}^{t} K^{(\beta)}_{1}(t-\tau) F_{j}(u)(\tau) d \tau \right\|_{2} \\
& \le C  \int_{0}^{t} (1+t-\tau)^{-\frac{1}{4}-\frac{\alpha}{2}} \| F(u)(\tau)\|_{1} d \tau 
+
C  \int_{0}^{t} e^{-c(t-\tau)} \| \nabla F(u)(\tau)\|_{2} d \tau \\
& \le C  \int_{0}^{t} (1+t-\tau)^{-\frac{1}{4}-\frac{\alpha}{2}} (1+\tau)^{-2} d \tau \| u \|_{X_{1}}^{2} 
+
C  \int_{0}^{t} e^{-c(t-\tau)} (1+\tau)^{-\frac{13}{4}} d \tau \| u \|_{X_{1}}^{2} \\
& \le C(1+t)^{-\frac{1}{4}-\frac{\alpha}{2}} \| u \|_{X_{1}}^{2}, 
\end{split}
\end{equation}
where we used the fact that $-\frac{1}{4}-\frac{\alpha}{2}<-2$ for $1\le \alpha \le 3$.
On the other hand, observing that 
\begin{equation*}
\begin{split}
\partial_{t} \int_{0}^{t} K^{(\beta)}_{1}(t-\tau) \mathcal{R}_{a} \mathcal{R}_{b} F_{j}(u)(\tau) d \tau 
=
\int_{0}^{t} \partial_{t} K^{(\beta)}_{1}(t-\tau) \mathcal{R}_{a} \mathcal{R}_{b} F_{j}(u)(\tau) d \tau,
\end{split}
\end{equation*}
we easily have
\begin{equation} \label{eq:5.11}
\begin{split}
& \left\| \nabla^{\alpha}  \partial_{t} \int_{0}^{t} K^{(\beta)}_{1}(t-\tau) \mathcal{R}_{a} \mathcal{R}_{b} F_{j}(u)(\tau) d \tau \right\|_{2} 
+ \left\| \nabla^{\alpha}  \partial_{t} \int_{0}^{t} K^{(\beta)}_{1}(t-\tau)  F_{j}(u)(\tau) d \tau \right\|_{2} 
\\
& \le C(1+t)^{-\frac{3}{4}-\frac{\alpha}{2}} \| u \|_{X_{1}}^{2}
\end{split}
\end{equation}
for $0 \le \alpha \le 1$ by a similar way.
Summing up the estimates \eqref{eq:5.10} and \eqref{eq:5.11}, we get
\begin{equation} \label{eq:5.12}
	\begin{split}
		& \| \Phi_{N}[u](t) \|_{X_{1}} \\
		& =\sup_{t \ge 0}
		\{ (1+t)^{\frac{7}{4}}\| \nabla^{3} \Phi_{N}[u](t)  \|_{2}  + (1+t)^{\frac{3}{4}} \| \nabla \Phi_{N}[u](t)  \|_{2} \\
		& \quad + (1+t)^{\frac{3}{4}} \| \partial_{t} \Phi_{N}[u](t)  \|_{2} + (1+t)^{\frac{5}{4}} \| \nabla \partial_{t} \Phi_{N}[u](t)  \|_{2}  \} \le C_{1}  \|u \|_{X_{1}}^{2}
	\end{split}
\end{equation}
for some $C_{1}>0$.
Then it follows from \eqref{eq:5.9} and \eqref{eq:5.12} that 
\begin{equation*} 
\begin{split}
\| u \|_{X_{1}} \le C_{0} \| f_{0},f_{1} \|_{Y_{1}}+ C_{1}  \|u \|_{X_{1}}^{2}.
\end{split}
\end{equation*}
This implies that if we choose $R=C_{0} \| f_{0},f_{1} \|_{Y_{1}}$ with sufficiently small $\| f_{0},f_{1} \|_{Y_{1}}$, 
we have \eqref{eq:5.7}.
A similar argument applies to the case \eqref{eq:5.8}.
We omit the detail.
Now we have \eqref{eq:5.7} and \eqref{eq:5.8}, and by the Banach fixed point theorem, the mapping $\Phi$ has a unique fixed point on $\mathbb{B}_{R}$, 
which proves Theorem \ref{thm:1.1}.   
%
%
\section{Smoothing effect of the global solutions}
In this section, we prove Theorem \ref{thm:1.3}.
	%
%
%
\begin{proof}[Proof of Theorem \ref{thm:1.3}]
Let $F(u)=\nabla u \nabla^{2} u$.
We firstly prove the estimate \eqref{eq:1.3}.
The estimates \eqref{eq:2.7}, \eqref{eq:2.8}, \eqref{eq:3.48}, \eqref{eq:3.49}, \eqref{eq:4.5}, \eqref{eq:4.9} and \eqref{eq:4.12} give
\begin{equation} \label{eq:6.8}
\begin{split}
\| \nabla^{\alpha} u_{lin}(t)  \|_{\infty}
\le C (1+t)^{-\frac{3}{2}-\frac{\alpha}{2}} \| f_{0}, f_{1} \|_{Y_{1}} 
\end{split}
\end{equation}
for $0 \le \alpha \le 1$, 
where $Y_{1}:= \{ \dot{H}^{3}  \cap \dot{W}^{1,1}  \}^{3} \times \{ \dot{H}^{1} \cap L^{1}  \}^{3}$.

The estimate for the nonlinear term is shown as follows:
\begin{equation} \label{eq:6.9}
	\begin{split}
		& \left\| \nabla^{\alpha} \int_{0}^{t} K^{(\beta)}_{1}(t-\tau) \mathcal{R}_{a} \mathcal{R}_{b} F_{j}(u)(\tau) d \tau \right\|_{\infty} 
		+ \left\| \nabla^{\alpha}  \int_{0}^{t} K^{(\beta)}_{1}(t-\tau)  F_{j}(u)(\tau) d \tau \right\|_{\infty} 
		\\
		& \le C  \int_{0}^{\frac{t}{2}} (1+t-\tau)^{-\frac{3}{2}-\frac{\alpha}{2}} \| F_{j}(u)(\tau)\|_{1} d \tau + C  \int_{\frac{t}{2}}^{t} (1+t-\tau)^{-\frac{1}{4}-\frac{\alpha}{2}} \| F_{j}(u)(\tau)\|_{2} d \tau\\
		& +
		C  \int_{0}^{t} e^{-c(t-\tau)} \| \nabla F_{j}(u)(\tau)\|_{2} d \tau \\
		& \le C  \int_{0}^{\frac{t}{2}} (1+t-\tau)^{-\frac{3}{2}-\frac{\alpha}{2}} (1+\tau)^{-2} d \tau 
		+
		C \int_{\frac{t}{2}}^{t} (1+t-\tau)^{-\frac{1}{4}-\frac{\alpha}{2}} (1+\tau)^{-\frac{11}{4}} d \tau \\
		& + C  \int_{0}^{t} e^{-c(t-\tau)}  (1+\tau)^{-\frac{13}{4}} d \tau  \\
		& \le C(1+t)^{-\frac{3}{2}-\frac{\alpha}{2}},
	\end{split}
\end{equation}
where $0 \le \alpha \le 1$.
Therefore, summing up the estimates \eqref{eq:6.8} and \eqref{eq:6.9}, 
we obtain the estimate \eqref{eq:1.3}.
%

	Secondly, we prove the estimate \eqref{eq:1.4}.
	We firstly claim that 
\begin{equation} \label{eq:6.1}
	\begin{split}
		\| \nabla^{2} \partial_{t} u_{lin}(t) \|_{p} 
		\le 
		C(1+t)^{-\frac{3}{2}-\frac{1}{2p}} t^{-3(\frac{1}{2}-\frac{1}{p})} \| f_{0}, f_{1} \|_{Y_{1}}
	\end{split}
\end{equation} 
	for $2 \le p \le 6$ and $t>0$.
	Now we apply the estimate \eqref{eq:4.1} to see that
	\begin{equation*} 
		\begin{split}
			\| \nabla^{2} \partial_{t} K_{0H}^{(\beta)}(t) g \|_{6} +
			\| \nabla^{2} \partial_{t}  K_{0H}^{(\beta)}(t) \mathcal{R}_{a}  \mathcal{R}_{b} g \|_{6} 
			& \le C e^{-ct}(\| \nabla^{2}  g \|_{6} +\| \nabla^{3}  g \|_{2} ) \le C e^{-ct} \| \nabla^{3}  g \|_{2}. 
		\end{split}
	\end{equation*}
	Thus noting the estimates \eqref{eq:4.6} and $\| f \|_{p} \le \| f \|_{2}^{\frac{3}{p}-\frac{1}{2}} \| f \|_{6}^{3(\frac{1}{2}-\frac{1}{p})}$ for $2 \le p \le 6$, 
	we obtain 
	\begin{equation} \label{eq:6.2}
		\begin{split}
			& \| \nabla^{2} \partial_{t} K_{0H}^{(\beta)}(t) g \|_{p} +
			\| \nabla^{2} \partial_{t}  K_{0H}^{(\beta)}(t) \mathcal{R}_{a}  \mathcal{R}_{b} g \|_{p} \le C e^{-ct} \| \nabla^{3}  g \|_{2}. 
		\end{split}
	\end{equation}
	Then it follows from \eqref{eq:3.48}, \eqref{eq:4.5} and \eqref{eq:6.2} that
\begin{equation} \label{eq:6.4}
	\begin{split}
		\| \nabla^{2} \partial_{t} K_{0}^{(\beta)}(t) g \|_{p} +
		\| \nabla^{2} \partial_{t}  K_{0}^{(\beta)}(t) \mathcal{R}_{a}  \mathcal{R}_{b} g \|_{p}  
		\le C  (1+t)^{-\frac{5}{2}(1-\frac{1}{p})-\frac{1}{2}} (\| \nabla g \|_{1} +\| \nabla^{3} g \|_{2}).
	\end{split}
\end{equation}
	Similarly, using \eqref{eq:4.2}, we also get 
	\begin{equation*} 
		\begin{split}
			\| \nabla^{2} \partial_{t} K_{1H}^{(\beta)}(t) g \|_{6} +
			\| \nabla^{2} \partial_{t}  K_{1H}^{(\beta)}(t) \mathcal{R}_{a}  \mathcal{R}_{b} g \|_{6} 
			& \le C e^{-ct}(\| g \|_{6} +t^{-\frac{3-\alpha}{2} } \| \nabla^{\alpha} g \|_{2} ) \\
			& 
			\le C e^{-ct}(1+t^{-\frac{3-\alpha}{2} }) \| \nabla^{\alpha}  g \|_{2} 
		\end{split}
	\end{equation*}
	for $1 \le \alpha \le 2$.
	Thus, when $2 \le p \le 6$, we see that
	\begin{equation} \label{eq:6.3}
		\begin{split}
			& \| \nabla^{2} \partial_{t} K_{1H}^{(\beta)}(t) g \|_{p} +
			\| \nabla^{2} \partial_{t}  K_{1H}^{(\beta)}(t) \mathcal{R}_{a}  \mathcal{R}_{b} g \|_{p} 
			\le C e^{-ct}(1+t^{-\frac{7}{4}+\frac{3}{2p}+\frac{\alpha}{2} }) \| \nabla^{\alpha} g \|_{2} 
		\end{split}
	\end{equation}
	for $1 \le \alpha \le 2$ by \eqref{eq:4.12} and \eqref{eq:4.15}. 
The estimates \eqref{eq:3.49}, \eqref{eq:4.15} and \eqref{eq:6.3} show
	\begin{equation} \label{eq:6.5}
		\begin{split}
			& \| \nabla^{2} \partial_{t} K_{1}^{(\beta)}(t) g \|_{p} +
			\| \nabla^{2} \partial_{t}  K_{1}^{(\beta)}(t) \mathcal{R}_{a}  \mathcal{R}_{b} g \|_{p} 
			\\
			& \le C (1+t)^{-\frac{5}{4}+\frac{1}{p}-\frac{\alpha}{2}} t^{-\frac{7}{4}+\frac{3}{2p}+\frac{\alpha}{2} } (\|  g \|_{1} +\| \nabla^{\alpha} g \|_{2})
		\end{split}
	\end{equation}
	for $1 \le \alpha \le 2$.
	The estimates \eqref{eq:6.4} and \eqref{eq:6.5} imply the estimate \eqref{eq:6.1}.
	For the nonlinear term, 
	observing that  
	\begin{equation}  \label{eq:6.6} 
		\begin{split}
			\| F(u)(\tau)\|_{p} & \le \| F(u)(\tau)\|_{2}^{\frac{3}{p}-\frac{1}{2}} \| F(u)(\tau)\|_{6}^{3(\frac{1}{2}-\frac{1}{p})}  \\
			& \le C \| F(u)(\tau)\|_{2}^{\frac{3}{p}-\frac{1}{2}} \| \nabla F(u)(\tau)\|_{2}^{3(\frac{1}{2}-\frac{1}{p})}  
			\le C(1+\tau)^{-\frac{7}{2}+\frac{3}{2p}}
		\end{split}
	\end{equation}
	for $2 \le p \le 6$ by the H\"oloder inequality, \eqref{eq:2.23}, \eqref{eq:5.5} and \eqref{eq:5.6}, 
	we apply the estimates \eqref{eq:2.8}, \eqref{eq:3.49}, \eqref{eq:4.2} and \eqref{eq:4.5} to have
	\begin{equation} \label{eq:6.7} 
		\begin{split}
			& \left\| \nabla^{2} \partial_{t} \int_{0}^{t} K^{(\beta)}_{1}(t-\tau) \mathcal{R}_{a} \mathcal{R}_{b} F_{j}(u)(\tau) d \tau \right\|_{p} 
			+
			\left\| \nabla^{2} \partial_{t} \int_{0}^{t} K^{(\beta)}_{1}(t-\tau) F_{j}(u)(\tau) d \tau \right\|_{p} \\
			& \le C \int_{0}^{\frac{t}{2}} (1+t-\tau)^{-\frac{5}{2}(1-\frac{1}{p})-\frac{1}{2}} \| F(u)(\tau)\|_{1} d \tau \\
			& +C \int_{\frac{t}{2}}^{t} (1+t-\tau)^{-\frac{5}{2}(\frac{1}{2}-\frac{1}{p})} \| \nabla F(u)(\tau)\|_{2} d \tau \\
			& + C \int_{0}^{t} e^{-c(t-\tau)}\{ \| F(u)(\tau)\|_{p} +(t-\tau)^{-\frac{3}{2}(\frac{1}{2}-\frac{1}{p})-\frac{1}{2}}\| \nabla F(u)(\tau)\|_{2} \} d \tau \\
			& \le C  \int_{0}^{\frac{t}{2}} (1+t-\tau)^{-\frac{5}{2}(1-\frac{1}{p})-\frac{1}{2}} (1+\tau)^{-2} d \tau 
			+ C \int_{\frac{t}{2}}^{t} (1+t-\tau)^{-\frac{5}{2}(\frac{1}{2}-\frac{1}{p})} (1+\tau)^{-\frac{13}{4}} d \tau \\
			& +
			C  \int_{0}^{t} e^{-c(t-\tau)} \{ 
			(1+\tau)^{-\frac{7}{2}+\frac{3}{2p}}+(t-\tau)^{-\frac{3}{2}(\frac{1}{2}-\frac{1}{p})-\frac{1}{2}} (1+\tau)^{-\frac{13}{4}}
				 \} d \tau \\
			& \le C(1+t)^{-\frac{5}{2}(1-\frac{1}{p})-\frac{1}{2}}
			\end{split}
	\end{equation}
	for $2 \le p <6$,
	since  we used the fact that $-\frac{3}{2}(\frac{1}{2}-\frac{1}{p})-\frac{1}{2} >-1$.
	Combining the estimates \eqref{eq:6.1} and \eqref{eq:6.7},
	we arrive at the desired estimate \eqref{eq:1.4}.

Next, we prove the estimate \eqref{eq:1.5}.
We apply a similar argument to the derivation of \eqref{eq:6.8} to have
\begin{equation} \label{eq:6.10}
	\begin{split}
		\| \nabla^{\alpha} \partial_{t} u_{lin}(t)  \|_{\infty}
\le C (1+t)^{-\frac{3}{2}-\frac{\alpha}{2}} \| f_{0}, f_{1} \|_{Y_{1}} 
		& 
		\le C (1+t)^{-\frac{7}{4}} t^{-\frac{1}{4}-\frac{\alpha}{2}} \| f_{0}, f_{1} \|_{Y_{1}}
	\end{split}
\end{equation}
for $0 \le \alpha \le 1$.
The estimate for the nonlinear term is obtained as in \eqref{eq:6.9};
\begin{equation} \label{eq:6.11}
	\begin{split}
		& \left\| \nabla^{\alpha}  \partial_{t} \int_{0}^{t} K^{(\beta)}_{1}(t-\tau) \mathcal{R}_{a} \mathcal{R}_{b} F_{j}(u)(\tau) d \tau \right\|_{\infty} 
		+ \left\| \nabla^{\alpha}  \partial_{t} \int_{0}^{t} K^{(\beta)}_{1}(t-\tau)  F_{j}(u)(\tau) d \tau \right\|_{\infty} \\
		& \le C(1+t)^{-2-\frac{\alpha}{2}}
	\end{split}
\end{equation}
for $0 \le \alpha \le 1$ by \eqref{eq:2.8}, \eqref{eq:3.49}, \eqref{eq:4.5} and \eqref{eq:4.13}.
Combining the estimates \eqref{eq:6.10}-\eqref{eq:6.11} with \eqref{eq:5.1}, 
we arrive at the estimate \eqref{eq:1.5}.
Finally, we show the estimate \eqref{eq:1.6}.
Again,
the estimates \eqref{eq:2.7}, \eqref{eq:2.8}, \eqref{eq:3.48}, \eqref{eq:3.49}, \eqref{eq:4.5}, \eqref{eq:4.10} and \eqref{eq:4.13} yield
\begin{equation} \label{eq:6.12}
	\begin{split}
		\| \partial_{t}^{2} u_{lin}(t) \|_{p} 
		\le C (1+t)^{-(1-\frac{1}{p})-\frac{1}{4}} t^{-\frac{3}{2}(\frac{1}{2}-\frac{1}{p})-\frac{1}{2}} \| f_{0}, f_{1} \|_{Y_{1}}
	\end{split}
\end{equation}
for $2 \le p \le 6$.
To estimate the nonlinear term, 
noting that $\partial_{t} \mathcal{K}^{(\beta)}_{1}(0,\xi)=1$,
$$
\partial_{t}^{2} \int_{0}^{t} K^{(\beta)}_{1}(t-\tau) \mathcal{R}_{a} \mathcal{R}_{b} F_{j}(u)(\tau) d \tau
= \mathcal{R}_{a} \mathcal{R}_{b} F_{j}(u)(t) +\int_{0}^{t} \partial_{t}^{2}  K^{(\beta)}_{1}(t-\tau) \mathcal{R}_{a} \mathcal{R}_{b} F_{j}(u)(\tau) d \tau, 
$$
$$
\partial_{t}^{2} \int_{0}^{t} K^{(\beta)}_{1}(t-\tau) F_{j}(u)(\tau) d \tau
=  F_{j}(u)(t) +\int_{0}^{t} \partial_{t}^{2}  K^{(\beta)}_{1}(t-\tau)  F_{j}(u)(\tau) d \tau, 
$$
and \eqref{eq:6.6},
we get
\begin{equation} \label{eq:6.13}
	\begin{split}
		& \left\| \partial_{t}^{2} \int_{0}^{t} K^{(\beta)}_{1}(t-\tau) \mathcal{R}_{a} \mathcal{R}_{b} F_{j}(u)(\tau) d \tau \right\|_{p} 
		+ \left\| \partial_{t}^{2} \int_{0}^{t} K^{(\beta)}_{1}(t-\tau)  F_{j}(u)(\tau) d \tau \right\|_{p} 
		\\
		& \le C \| F_{j}(u)(t) \|_{p} \\
		& +C \int_{0}^{\frac{t}{2}} (1+t-\tau)^{-\frac{5}{2}(1-\frac{1}{p})} \| F(u)(\tau)\|_{1} d \tau 
		 +C \int_{\frac{t}{2}}^{t}   (1+t-\tau)^{-\frac{5}{2}(\frac{1}{2}-\frac{1}{p})}  \| F(u)(\tau)\|_{2} d \tau \\
		& +C  \int_{0}^{t} e^{-c(t-\tau)}(\| F(u)(\tau)\|_{p} +(t-\tau)^{-\frac{3}{2}(\frac{1}{2}-\frac{1}{p})-\frac{1}{2}} \| \nabla F(u)(\tau)\|_{2} ) d \tau \\
		& \le 
		C (1+t)^{-\frac{5}{2}(1-\frac{1}{p})-1-\frac{1}{p}} + C  \int_{0}^{\frac{t}{2}} (1+t-\tau)^{-\frac{5}{2}(1-\frac{1}{p})} (1+\tau)^{-2} d \tau \\
		& + C  \int_{\frac{t}{2}}^{t}   (1+t-\tau)^{-\frac{5}{2}(\frac{1}{2}-\frac{1}{p})} (1+\tau)^{-\frac{11}{4}} d \tau \\
		&+
		C  \int_{0}^{t} e^{-c(t-\tau)} \{ (1+\tau)^{-\frac{5}{2}(1-\frac{1}{p})-1-\frac{1}{p}}  +(t-\tau)^{-\frac{3}{2}(\frac{1}{2}-\frac{1}{p})-\frac{1}{2}} (1+\tau)^{-\frac{13}{4}} \}  d \tau  \\
		& \le C(1+t)^{-\frac{5}{2}(1-\frac{1}{p})}
	\end{split}
\end{equation}
by \eqref{eq:2.8}, \eqref{eq:3.49}, \eqref{eq:4.5}, \eqref{eq:4.15}, \eqref{eq:5.4}-\eqref{eq:5.6}, where $2 \le p <6$.
Therefore we can conclude the desired estimate \eqref{eq:1.6} by the combination of \eqref{eq:6.12} and \eqref{eq:6.13}.
We complete the proof of Theorem \ref{thm:1.3}. 
\end{proof}
%
%
\section{Asymptotic profiles of the solutions}
In this section, we study the large time behavior of the solutions obtained in Theorems \ref{thm:1.1} and \ref{thm:1.3}.
We first deal with asymptotic profiles of the linear solution $u_{lin}(t)$ as $t \to \infty$, 
which is simply reformulation of the linear estimates proved in previous sections.
Secondly, we describe asymptotic profiles of nonlinear terms in the integral equation \eqref{eq:5.1}. 
We conclude this section by proving Theorem \ref{thm:1.5}.

In the sequel, we use the following notation for the simplicity.
\begin{equation} \label{eq:7.1}
	\begin{split}
		G(t,x) & = G_{0,lin}(t,x) +G_{1,lin}(t,x)+G_{N}(t,x),\\
		H(t,x) & = H_{0,lin}(t,x) +H_{1,lin}(t,x) +H_{N}(t,x),\\
		\tilde{G}(t,x) & = \tilde{G}_{0,lin}(t,x)+\tilde{G}_{1,lin}(t,x) +\tilde{G}_{N}(t,x),
	\end{split}
\end{equation}
where 
\begin{equation*}
	\begin{split}
		G_{0,lin}(t,x)& := 
		\nabla^{-1} \mathcal{F}^{-1} \left[ 
		\left( \mathcal{G}^{(\sqrt{\lambda+2 \mu})}_{0}(t,\xi) -\mathcal{G}^{(\sqrt{\mu})}_{0}(t,\xi) \right)\mathcal{P} m_{0}
		+
		\mathcal{G}^{(\sqrt{\mu})}_{0}(t,\xi) m_{0}
		\right], \\
		G_{1,lin}(t,x)& := 
		\mathcal{F}^{-1} \left[ 
		\left( \mathcal{G}^{(\sqrt{\lambda+2 \mu})}_{1}(t,\xi) -\mathcal{G}^{(\sqrt{\mu})}_{1}(t,\xi) \right)\mathcal{P} m_{1}
		+
		\mathcal{G}^{(\sqrt{\mu})}_{1}(t,\xi) m_{1}
		\right], \\ 
		 G_{N}(t,x)& := 
		\mathcal{F}^{-1} \left[ 
		\left( \mathcal{G}^{(\sqrt{\lambda+2 \mu})}_{1}(t,\xi) -\mathcal{G}^{(\sqrt{\mu})}_{1}(t,\xi) \right)\mathcal{P} M[u]
		+
		\mathcal{G}^{(\sqrt{\mu})}_{1}(t,\xi) M[u]
		\right], 
	\end{split}
\end{equation*}
\begin{equation*} 
	\begin{split}
		& H_{0,lin}(t,x)\\
		& := -\Delta 
		\nabla^{-1} \mathcal{F}^{-1} \left[ 
		\left( (\lambda+2 \mu)  
		\mathcal{G}^{(\sqrt{\lambda+2 \mu})}_{1}(t,\xi) 
		-\mu \mathcal{G}^{(\sqrt{\mu} )}_{1}(t,\xi) 
		\right)\mathcal{P} m_{0}
		+ \mu \mathcal{G}^{(\sqrt{\mu} )}_{1}(t,\xi) m_{0}
		\right], 
	\end{split}
\end{equation*}
\begin{equation*} 
	\begin{split}
		H_{1,lin}(t,x)& := 
		\mathcal{F}^{-1} \left[ 
		\left(  \mathcal{G}^{(\sqrt{\lambda+2 \mu})}_{0}(t,\xi) -\mathcal{G}^{(\sqrt{\mu} )}_{0}(t,\xi) \right) \mathcal{P} m_{1}
		+
		\mathcal{G}^{(\sqrt{\mu} )}_{0}(t,\xi) m_{1} 
		\right], \\
		H_{N}(t,x)& := 
		\mathcal{F}^{-1} \left[ 
		\left( \mathcal{G}^{(\sqrt{\lambda+2 \mu})}_{0}(t,\xi) -\mathcal{G}^{(\sqrt{\mu})}_{0}(t,\xi) \right)\mathcal{P} M[u]
		+
		\mathcal{G}^{(\sqrt{\mu})}_{0}(t,\xi) M[u]
		\right], 
	\end{split}
\end{equation*}
and
\begin{equation*} 
	\begin{split}
		& \tilde{G}_{0,lin}(t,x) \\
		& := -\Delta \nabla^{-1}
		\mathcal{F}^{-1} \left[
		\left(
		(\lambda+2 \mu) 
		\mathcal{G}^{(\sqrt{\lambda+2 \mu})}_{0}(t,\xi) 
		- \mu  \mathcal{G}^{(\sqrt{\mu} )}_{0}(t,\xi) 
		\right)\mathcal{P} m_{0}
		+ \mu  
		\mathcal{G}^{(\sqrt{\mu} )}_{0}(t,\xi) m_{0}
		\right], 
	\end{split}
\end{equation*}
\begin{equation*} 
	\begin{split}
\tilde{G}_{1,lin}(t,x)& := -\Delta
\mathcal{F}^{-1} \left[
\left(
(\lambda+2 \mu) 
\mathcal{G}^{(\sqrt{\lambda+2 \mu})}_{1}(t,\xi) 
- \mu  \mathcal{G}^{(\sqrt{\mu} )}_{1}(t,\xi) 
\right)\mathcal{P} m_{1} 
+ \mu  
\mathcal{G}^{(\sqrt{\mu} )}_{1}(t,\xi) m_{1}
\right], 
	\end{split}
\end{equation*}
\begin{equation*} 
	\begin{split}
		& \tilde{G}_{N}(t,x)\\
		& := -\Delta
		\mathcal{F}^{-1} \left[
		\left(
		(\lambda+2 \mu) 
		\mathcal{G}^{(\sqrt{\lambda+2 \mu})}_{1}(t,\xi) 
		- \mu  \mathcal{G}^{(\sqrt{\mu} )}_{1}(t,\xi) 
		\right)\mathcal{P} M[u] 
		+ \mu  
		\mathcal{G}^{(\sqrt{\mu} )}_{1}(t,\xi) M[u]
		\right].
	\end{split}
\end{equation*}
\subsection{Asymptotic profiles of the linear solutions}
In this subsection, our goal is to show that $u_{lin}(t)$ (resp. $ \partial_{t} u_{lin}(t)$, $\partial_{t}^{2} u_{lin}(t)$) is approximated by $G_{0, lin}(t)+G_{1, lin}(t)$ 
(resp. $H_{0, lin}(t)+H_{1, lin}(t)$, $\tilde{G}_{0, lin}(t)+\tilde{G}_{1, lin}(t)$) as $t \to \infty$.
For this purpose, the following proposition plays an essential role. 
\begin{prop} \label{prop:7.1}
		Let $\alpha, \ell, m \ge 0$ and $g \in L^{1}$.
		Then it holds that 
		\begin{equation} \label{eq:7.2}
			\begin{split}
				& \left\| \nabla^{\alpha} \left(
				\partial^{\ell}_{t} K_{0}^{(\beta)}(t) \mathcal{R}_{a} \mathcal{R}_{b} g - m_{g}(-1)^{\frac{\ell}{2}} \beta^{\ell} \nabla^{\ell} \mathcal{R}_{a} \mathcal{R}_{b} G_{0}^{(\beta)}(t) 
				\right)
				\right\|_{p} \\
				&
				\le o(t^{-\frac{3}{2}(1-\frac{1}{p})-(1-\frac{1}{p})+\frac{1}{2}-\frac{\ell+\alpha}{2}})  +  C\| \nabla^{\alpha}  
				\partial^{\ell}_{t} K_{0H}^{(\beta)}(t) \mathcal{R}_{a} \mathcal{R}_{b} g \|_{p} 
			\end{split}
		\end{equation}
		for $\ell=2m$,
		\begin{equation} \label{eq:7.3}
			\begin{split}
				& \left\| \nabla^{\alpha} \left(
				\partial^{\ell}_{t} K_{0}^{(\beta)}(t) \mathcal{R}_{a} \mathcal{R}_{b}  g - m_{g} (-1)^{\frac{\ell+1}{2}} \beta^{\ell+1} \nabla^{\ell+1} \mathcal{R}_{a} \mathcal{R}_{b} G_{1}^{(\beta)}(t) 
				\right)
				\right\|_{p} \\
				&
				\le o(t^{-\frac{3}{2}(1-\frac{1}{p})-(1-\frac{1}{p})+\frac{1}{2}-\frac{\ell+\alpha}{2}}) + C\| \nabla^{\alpha}  
				\partial^{\ell}_{t} K_{0H}^{(\beta)}(t) \mathcal{R}_{a} \mathcal{R}_{b} g \|_{p} 
			\end{split}
		\end{equation}
		for $\ell=2m+1$,
		\begin{equation} \label{eq:7.4}
			\begin{split}
				& \left\| \nabla^{\alpha} \left(
				\partial^{\ell}_{t} K_{1}^{(\beta)}(t) \mathcal{R}_{a} \mathcal{R}_{b}  g -  m_{g} (-1)^{\frac{\ell}{2}} \beta^{\ell} \nabla^{\ell} \mathcal{R}_{a} \mathcal{R}_{b} G_{1}^{(\beta)}(t) 
				\right)
				\right\|_{p} \\
				&
				\le o(t^{-\frac{3}{2}(1-\frac{1}{p})-(1-\frac{1}{p})+1-\frac{\ell+\alpha}{2} }) +  C\| \nabla^{\alpha}  
				\partial^{\ell}_{t} K_{1H}^{(\beta)}(t) \mathcal{R}_{a} \mathcal{R}_{b} g \|_{p} 
			\end{split}
		\end{equation}
		for $\ell=2m$ and 
		\begin{equation} \label{eq:7.5}
			\begin{split}
				& \left\| \nabla^{\alpha} \left(
				\partial^{\ell}_{t} K_{1}^{(\beta)}(t) \mathcal{R}_{a} \mathcal{R}_{b}  g - m_{g} (-1)^{\frac{\ell-1}{2}} \beta^{\ell-1} \nabla^{\ell-1} \mathcal{R}_{a} \mathcal{R}_{b} G_{0}^{(\beta)}(t) 
				\right)
				\right\|_{p} \\
				&
				\le o(t^{-\frac{3}{2}(1-\frac{1}{p})-(1-\frac{1}{p})+1-\frac{\ell+\alpha}{2} })
				+  C\| \nabla^{\alpha}  
				\partial^{\ell}_{t} K_{1H}^{(\beta)}(t) \mathcal{R}_{a} \mathcal{R}_{b} g \|_{p} 
			\end{split}
		\end{equation}
		for $\ell=2m+1$, 
		as $t \to \infty$, where $1<p \le \infty$ for $\ell+\alpha=0$ and $1 \le p \le \infty$ for $\ell+\alpha \ge 1$.
		Here $m_{g}$ is defined by \eqref{eq:3.56}.
\end{prop}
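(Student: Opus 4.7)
The plan is to decompose $\partial_{t}^{\ell} K_{j}^{(\beta)}(t)\mathcal{R}_{a}\mathcal{R}_{b} g$ via the Littlewood-Paley-type cutoffs $\chi_{L},\chi_{M},\chi_{H}$ and treat each piece separately:
\begin{equation*}
\partial_{t}^{\ell} K_{j}^{(\beta)}(t)\mathcal{R}_{a}\mathcal{R}_{b} g
= \partial_{t}^{\ell} K_{jL}^{(\beta)}(t)\mathcal{R}_{a}\mathcal{R}_{b} g
+ \partial_{t}^{\ell} K_{jM}^{(\beta)}(t)\mathcal{R}_{a}\mathcal{R}_{b} g
+ \partial_{t}^{\ell} K_{jH}^{(\beta)}(t)\mathcal{R}_{a}\mathcal{R}_{b} g.
\end{equation*}
The high-frequency piece is exactly the tail term kept on the right-hand side of \eqref{eq:7.2}--\eqref{eq:7.5}, so it needs no further treatment. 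The middle-frequency piece is controlled by \eqref{eq:4.5} of Proposition~\ref{prop:4.1}, which gives exponential-in-$t$ decay with an $L^{1}$ loss on $g$; since $g\in L^{1}$, this contributes to the $o(\cdot)$ term.

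For the low-frequency piece I will use a two-step approximation. First, for $j=1$ the identity in Corollary~\ref{cor:3.7} (together with the Riesz $L^{p}$-boundedness \eqref{eq:2.18} whenever $1<p<\infty$, and the fact that the Riesz symbols $\xi_{a}\xi_{b}/|\xi|^{2}$ are already absorbed in the kernel $\mathbb{K}_{j}^{(\beta)}$ introduced in \eqref{eq:3.1}--\eqref{eq:3.2} when $p\in\{1,\infty\}$) reduces $\partial_{t}^{\ell}K_{jL}^{(\beta)}(t)\mathcal{R}_{a}\mathcal{R}_{b} g$ to $(-1)^{\ell/2}\beta^{\ell}\nabla^{\ell}G_{kL}^{(\beta)}(t)\ast \mathcal{R}_{a}\mathcal{R}_{b} g$ (for the appropriate index $k\in\{0,1\}$ and power dictated by the parity of $\ell$) up to an error of order $(1+t)^{-\frac{3}{2}(1-\frac{1}{p})-(1-\frac{1}{p})-\frac{\ell+\alpha}{2}}$, which is strictly better than the target by a factor $t^{-1/2}$ and hence absorbed in the $o(\cdot)$. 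For $j=0$ the same step is combined with the Lemma~\ref{Lem:2.3} decomposition $\mathcal{K}_{0}^{(\beta)}\chi_{L}=\nu|\xi|^{2}\mathcal{K}_{1}^{(\beta)}\chi_{L}+\mathcal{K}_{00}^{(\beta)}\chi_{L}$; the $\nu|\xi|^{2}$ factor produces an extra $t^{-1}$ gain from the heat kernel, so that term is also absorbed, and the remaining piece is handled by \eqref{eq:3.57}--\eqref{eq:3.58}.

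Second, Proposition~\ref{Prop:3.6} applied to $\nabla^{\ell}G_{kL}^{(\beta)}(t)\ast \mathcal{R}_{a}\mathcal{R}_{b} g$ yields the replacement by $m_{g}\,\nabla^{\ell}G_{kL}^{(\beta)}(t)\mathcal{R}_{a}\mathcal{R}_{b}$ at the sharp rate $o(t^{-\frac{3}{2}(1-\frac{1}{p})-(1-\frac{1}{p})+\frac{1}{2}-\frac{\ell+\alpha}{2}})$, which is precisely the desired decay. Finally, the middle- and high-frequency parts of $\nabla^{\ell}\mathcal{R}_{a}\mathcal{R}_{b}G_{k}^{(\beta)}(t)$ are of exponential order in $t$ by Lemma~\ref{Lem:2.4}, so we may replace $G_{kL}^{(\beta)}(t)$ by $G_{k}^{(\beta)}(t)$ in the final profile without changing the asymptotics. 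Concatenating these approximations via the triangle inequality gives \eqref{eq:7.2}--\eqref{eq:7.5}.

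The main obstacle I anticipate is bookkeeping rather than a conceptual difficulty: one must verify that every error from Corollary~\ref{cor:3.7}, the $|\xi|^{2}\mathcal{K}_{1}$ auxiliary term from Lemma~\ref{Lem:2.3}, and the mid/high tail of $G_{k}^{(\beta)}$ produces a rate strictly faster than the stated $o(\cdot)$, so that only the step in Proposition~\ref{Prop:3.6} dictates the sharp asymptotic rate. The parity-dependent dichotomy in the claim (cosine-versus-sine profile depending on $\ell \bmod 2$) is inherited automatically from Corollary~\ref{cor:3.7}, so no additional case analysis beyond the one already carried out in Section~3 is needed.
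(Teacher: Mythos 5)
Your proposal is correct and follows essentially the same route as the paper: split off the high-frequency part as the retained tail, kill the middle-frequency part with \eqref{eq:4.5}, use the Lemma \ref{Lem:2.3} decomposition together with Proposition \ref{Prop:3.1} (equivalently Corollary \ref{cor:3.7} plus the Riesz-kernel versions for the endpoints $p=1,\infty$) to pass to the diffusion-wave profile, invoke Proposition \ref{Prop:3.6} for the sharp $o(\cdot)$ rate, and remove the mid/high part of the profile via Lemma \ref{Lem:2.4}. The bookkeeping you flag works out exactly as you predict — every auxiliary error is faster than the target by at least $t^{-1/2}$, and only the Proposition \ref{Prop:3.6} step saturates the rate.
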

\begin{proof}[Proof of Proposition \ref{prop:7.1}]
	The estimates \eqref{eq:7.2}-\eqref{eq:7.5} are shown in a same way.
	Here we only prove the estimate \eqref{eq:7.2}.
	Now recalling
	\begin{equation*} 
		\begin{split}
			\mathbb{G}_{0}(t,x) = \mathcal{R}_{a} \mathcal{R}_{b} G_{0}^{(\beta)}(t) \ast \mathcal{F}^{-1}[\chi_{L}],
		\end{split}
	\end{equation*} 
	and 
	\begin{equation*} 
		\begin{split}
			\partial_{t}^{\ell} K_{0L}(t) \mathcal{R}_{a} \mathcal{R}_{b} g
			= \partial_{t}^{\ell} \mathbb{K}_{00}(t) \ast g -\Delta \partial_{t}^{\ell} \mathbb{K}_{1}(t) \ast g
		\end{split}
	\end{equation*} 
	by \eqref{eq:2.4},
	we decompose the integrand as 
	\begin{equation*} 
		\begin{split}
			& \nabla^{\alpha} 
			(\partial^{\ell}_{t} K_{0}^{(\beta)}(t) \mathcal{R}_{a} \mathcal{R}_{b} g - m_{g}(-1)^{\frac{\ell}{2}} \beta^{\ell} \nabla^{\ell} \mathcal{R}_{a} \mathcal{R}_{b} G_{0}^{(\beta)}(t) ) \\
			& = \nabla^{\alpha} 
			(\partial_{t}^{\ell} \mathbb{K}_{00}(t) \ast g -(-1)^{\frac{\ell}{2}} \beta^{\ell} \nabla^{\ell} \mathbb{G}_{0}(t) \ast g
			)
			+
			(-1)^{\frac{\ell}{2}} \beta^{\ell} \nabla^{\alpha+\ell} 
			( \mathbb{G}_{0}(t) \ast g-m_{g} \mathbb{G}_{0}(t)) \\
			& -\Delta \nabla^{\alpha} \partial_{t}^{\ell} \mathbb{K}_{1}(t) \ast g + \sum_{k=M,H} \nabla^{\alpha} 
			\partial^{\ell}_{t} K_{0k}^{(\beta)}(t) \mathcal{R}_{a} \mathcal{R}_{b} g \\
			& - m_{g}(-1)^{\frac{\ell}{2}} \beta^{\ell} \nabla^{\ell} \mathcal{R}_{a} \mathcal{R}_{b} \mathcal{F}^{-1}\left[ 
			\mathcal{G}_{0}^{(\beta)}(t, \xi)(\chi_{M}+\chi_{H}) 
			\right].
		\end{split}
	\end{equation*} 
	Then we take the $L^{p}$ norm to the both sides and apply the estimates \eqref{eq:3.4}, \eqref{eq:3.52}, \eqref{eq:3.49}, \eqref{eq:4.5} and \eqref{eq:2.11}
	to see that 
		\begin{equation*} 
		\begin{split}
			& \| \nabla^{\alpha} 
			(\partial^{\ell}_{t} K_{0}^{(\beta)}(t) \mathcal{R}_{a} \mathcal{R}_{b} g - m_{g}(-1)^{\frac{\ell}{2}} \beta^{\ell} \nabla^{\ell} \mathcal{R}_{a} \mathcal{R}_{b} G_{0}^{(\beta)}(t) ) \|_{p} \\
			& \le \| \nabla^{\alpha} 
			(\partial_{t}^{\ell} \mathbb{K}_{00}(t) \ast g -(-1)^{\frac{\ell}{2}} \beta^{\ell} \nabla^{\ell} \mathbb{G}_{0}(t) \ast g
			) \|_{p}
			+
			C \| \nabla^{\alpha+\ell} 
			( \mathbb{G}_{0}(t) \ast g-m_{g} \mathbb{G}_{0}(t)) \|_{p} \\
			& + \| \Delta \nabla^{\alpha} \partial_{t}^{\ell} \mathbb{K}_{1}(t) \ast g \|_{p} + \sum_{k=M,H} \| \nabla^{\alpha}  
			\partial^{\ell}_{t} K_{0k}^{(\beta)}(t) \mathcal{R}_{a} \mathcal{R}_{b} g \|_{p} \\
			& + C \| \mathcal{R}_{a} \mathcal{R}_{b} \mathcal{F}^{-1}\left[ 
			\mathcal{G}_{0}^{(\beta)}(t, \xi)(\chi_{M}+\chi_{H}) 
			\right] \|_{p} \\ 
			& \le C(1+t)^{-\frac{5}{2}(1-\frac{1}{p})-\frac{\alpha+\ell}{2}} \| g \|_{1}
			+ o(t^{-\frac{5}{2}(1-\frac{1}{p})+\frac{1}{2}-\frac{\alpha+\ell}{2}} )  + C e^{-ct} t^{-\frac{3}{2}(1-\frac{1}{p})-\frac{\alpha+\ell}{2}}\\
			& +  \| \nabla^{\alpha}  
			\partial^{\ell}_{t} K_{0H}^{(\beta)}(t) \mathcal{R}_{a} \mathcal{R}_{b} g \|_{p} \\
			& = o(t^{-\frac{5}{2}(1-\frac{1}{p})+\frac{1}{2}-\frac{\alpha+\ell}{2}} )  +  \| \nabla^{\alpha}  
			\partial^{\ell}_{t} K_{0H}^{(\beta)}(t) \mathcal{R}_{a} \mathcal{R}_{b} g \|_{p} 
		\end{split}
	\end{equation*} 
	as $t \to \infty$, which is the desired estimate \eqref{eq:7.2}.
	We complete the proof of Proposition \eqref{eq:7.1}.
\end{proof}
We can prove Corollary \ref{cor:3.7} by applying the same argument of the proof of Proposition \ref{Prop:3.1}.
\begin{cor} \label{cor:7.2}
	Let $\alpha, \ell, m \ge 0$, $1 \le p \le \infty$ and $g \in L^{1}$.
	Then it holds that 
	\begin{equation} \label{eq:7.6}
		\begin{split}
			& \left\| \nabla^{\alpha} \left(
			\partial^{\ell}_{t} K_{0}^{(\beta)}(t) g - m_{g}(-1)^{\frac{\ell}{2}} \beta^{\ell} G_{0}^{(\beta)}(t) 
			\right)
			\right\|_{p} \\
			&
			\le o(t^{-\frac{3}{2}(1-\frac{1}{p})-(1-\frac{1}{p})+\frac{1}{2}-\frac{\ell+\alpha}{2}}) 
			+  C\| \nabla^{\alpha}  
			\partial^{\ell}_{t} K_{0M}^{(\beta)}(t)  g \|_{p} 
			 +  C\| \nabla^{\alpha}  
			\partial^{\ell}_{t} K_{0H}^{(\beta)}(t)  g \|_{p} 
		\end{split}
	\end{equation}
	for $\ell=2m$,
	\begin{equation} \label{eq:7.7}
		\begin{split}
			& \left\| \nabla^{\alpha} \left(
			\partial^{\ell}_{t} K_{0}^{(\beta)}(t)  g - m_{g} (-1)^{\frac{\ell+1}{2}} \beta^{\ell+1} \nabla^{\ell+1}  G_{1}^{(\beta)}(t) 
			\right)
			\right\|_{p} \\
			&
			\le o(t^{-\frac{3}{2}(1-\frac{1}{p})-(1-\frac{1}{p})+\frac{1}{2}-\frac{\ell+\alpha}{2}}) 
			+  C\| \nabla^{\alpha}  
			\partial^{\ell}_{t} K_{0M}^{(\beta)}(t)  g \|_{p} 
			+ C\| \nabla^{\alpha}  
			\partial^{\ell}_{t} K_{0H}^{(\beta)}(t)  g \|_{p} 
		\end{split}
	\end{equation}
	for $\ell=2m+1$,
	\begin{equation} \label{eq:7.8}
		\begin{split}
			& \left\| \nabla^{\alpha} \left(
			\partial^{\ell}_{t} K_{1}^{(\beta)}(t)   g -  m_{g} (-1)^{\frac{\ell}{2}} \beta^{\ell} \nabla^{\ell} G_{1}^{(\beta)}(t) 
			\right)
			\right\|_{p} \\
			&
			\le o(t^{-\frac{3}{2}(1-\frac{1}{p})-(1-\frac{1}{p})+1-\frac{\ell+\alpha}{2} }) 
			+  C\| \nabla^{\alpha}  
			\partial^{\ell}_{t} K_{1M}^{(\beta)}(t)  g \|_{p} 
			+  C\| \nabla^{\alpha}  
			\partial^{\ell}_{t} K_{1H}^{(\beta)}(t)  g \|_{p} 
		\end{split}
	\end{equation}
	for $\ell=2m$ and 
	\begin{equation} \label{eq:7.9}
		\begin{split}
			& \left\| \nabla^{\alpha} \left(
			\partial^{\ell}_{t} K_{1}^{(\beta)}(t)  g - m_{g} (-1)^{\frac{\ell-1}{2}} \beta^{\ell-1} \nabla^{\ell-1} G_{0}^{(\beta)}(t) 
			\right)
			\right\|_{p} \\
			&
			\le o(t^{-\frac{3}{2}(1-\frac{1}{p})-(1-\frac{1}{p})+1-\frac{\ell+\alpha}{2} })
			+  C\| \nabla^{\alpha}  
			\partial^{\ell}_{t} K_{1M}^{(\beta)}(t)  g \|_{p} 
			+  C\| \nabla^{\alpha}  
			\partial^{\ell}_{t} K_{1H}^{(\beta)}(t)  g \|_{p} 
		\end{split}
	\end{equation}
	for $\ell=2m+1$, 
	as $t \to \infty$.
\end{cor}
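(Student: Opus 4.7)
The strategy mirrors the proof of Proposition~\ref{prop:7.1}, but with the low-frequency decomposition of Lemma~\ref{Lem:2.3} (equation \eqref{eq:2.4}) replacing the Riesz-transform split that is built into $\mathbb{K}_{00}$ there. It suffices to treat \eqref{eq:7.6} in detail; the three remaining estimates \eqref{eq:7.7}--\eqref{eq:7.9} follow by the same argument after swapping the leading profile between $G_0^{(\beta)}$ and $G_1^{(\beta)}$ according to the parity of $\ell$, exactly as in Corollary~\ref{cor:3.7}.

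First, split the operator using the cut-offs,
\begin{equation*}
\partial_t^\ell K_0^{(\beta)}(t)g = \partial_t^\ell K_{0L}^{(\beta)}(t)g + \partial_t^\ell K_{0M}^{(\beta)}(t)g + \partial_t^\ell K_{0H}^{(\beta)}(t)g,
\end{equation*}
and keep the $M$- and $H$-pieces untouched on the right-hand side of \eqref{eq:7.6} as they appear in the claim; only the low-frequency term needs further analysis.

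Second, insert \eqref{eq:2.4} to rewrite $\partial_t^\ell K_{0L}^{(\beta)}(t)g = \partial_t^\ell K_{00L}^{(\beta)}(t)g - \nu \Delta\, \partial_t^\ell K_{1L}^{(\beta)}(t)g$, and then add and subtract the leading-order profile so that the error decomposes into four pieces: (i) a low-frequency approximation term of the shape $\nabla^\alpha\bigl(\partial_t^\ell K_{00L}^{(\beta)}(t)g - c_\ell\,\beta^{\ell}\nabla^{\ell} G_{0L}^{(\beta)}(t)\ast g\bigr)$ (with an analogous leading profile of $G_1^{(\beta)}$-type when $\ell$ is odd), bounded by Corollary~\ref{cor:3.7} (\eqref{eq:3.57} or \eqref{eq:3.58}); (ii) a ``convolution replaced by total mass'' term $c_\ell \beta^\ell \nabla^{\alpha+\ell}\bigl(G_{0L}^{(\beta)}(t)\ast g - m_g G_{0L}^{(\beta)}(t)\bigr)$, for which a non-Riesz analogue of Proposition~\ref{Prop:3.6} is required; (iii) the extra-Laplacian remainder $-\nu\Delta\, \partial_t^\ell K_{1L}^{(\beta)}(t)g$, whose two additional spatial derivatives give a decay gain of $t^{-1}$ over the target rate via Proposition~\ref{Prop:3.4} (\eqref{eq:3.49}); and (iv) the middle- and high-frequency contribution of the profile $m_g \nabla^\ell G_0^{(\beta)}(t)$, controlled by Lemma~\ref{Lem:2.4}. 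Summing these bounds yields the $o(\cdot)$ remainder on the right-hand side of \eqref{eq:7.6}.

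The main obstacle is piece (ii), namely the version of Proposition~\ref{Prop:3.6} in which $\mathbb{G}_j^{(\beta)}$ is replaced by $G_{jL}^{(\beta)}$ (no Riesz factors). However, the proof of Proposition~\ref{Prop:3.6} uses only the pointwise-in-time $L^p$ decay of the kernel (Lemma~\ref{Lem:3.5}) together with the convolution splitting around $|y|\le t^{1/4}$ based on the mean-value theorem; the corresponding $L^p$ bounds for $G_{jL}^{(\beta)}$ are already contained in Lemma~\ref{Lem:2.2} (\eqref{eq:2.9}--\eqref{eq:2.10}), so the argument transfers verbatim. Once this analogue is in hand, the remaining estimates \eqref{eq:7.7}--\eqref{eq:7.9} are obtained by repeating the same scheme with the leading profile chosen according to the parity of $\ell$, which reduces the entire proof to a mechanical bookkeeping of decay rates.
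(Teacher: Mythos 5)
Your proposal is correct and follows essentially the same route as the paper: split off the middle and high frequency parts, rewrite the low-frequency piece via \eqref{eq:2.4}, control the approximation error by Corollary \ref{cor:3.7}, the mass-replacement term by a Riesz-free analogue of Proposition \ref{Prop:3.6}, the $-\nu\Delta K_{1L}^{(\beta)}$ remainder by the extra two derivatives, and the mid/high-frequency part of the profile by Lemma \ref{Lem:2.4}. The one ingredient you gloss over is the kernel bound $\|\partial_t^{\ell}\nabla^{\alpha}G_{jL}^{(\beta)}(t)\|_{p}$ at the endpoint $p=1$, $\alpha+\ell=0$, which is not literally an instance of the operator bounds \eqref{eq:2.9}--\eqref{eq:2.10}; the paper establishes it separately as \eqref{eq:7.10}--\eqref{eq:7.11} from the representation $G_{jL}^{(\beta)}(t)=W_{j}^{(\beta)}(t)\mathcal{F}^{-1}[e^{-\nu t|\xi|^{2}/2}\chi_{L}]$ together with \eqref{eq:2.16}--\eqref{eq:2.17} and \eqref{eq:2.19}, and this is exactly what lets the corollary cover all $1\le p\le\infty$ including $\alpha+\ell=0$, in contrast to Proposition \ref{prop:7.1}.
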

\begin{rem}
	Here we note that the estimates \eqref{eq:7.6}-\eqref{eq:7.9} for the case $p=2$ and $\ell=0,1$ is already proved in \cite{I}.
\end{rem}
\begin{proof}[Proof of Corollaries \ref{cor:7.2}]
	The proof of Corollary \ref{cor:7.2} is shown by a similar way to Corollary \ref{cor:3.7}.
	The only difference is that we can deal with the case $\alpha+\ell=0$ and $p=1$, which comes from the facts that 
	\begin{equation} \label{eq:7.10}
		\begin{split}
			\left\| \partial^{\ell}_{t} \nabla^{\alpha} 
			G_{0L}^{(\beta)}(t) 
			\right\|_{p} 
			\le
			C (1+t)^{
				-\frac{3}{2}(1-\frac{1}{p})-(1-\frac{1}{p})+\frac{1}{2}-\frac{\ell+\alpha}{2}},
		\end{split}
	\end{equation}
	\begin{equation}  \label{eq:7.11}
		\begin{split}
			\left\| \partial^{\ell}_{t} \nabla^{\alpha} 
			G_{1L}^{(\beta)}(t) 
			\right\|_{p} 
			\le C (1+t)^{-\frac{3}{2}(1-\frac{1}{p})-(1-\frac{1}{p})+1-\frac{\ell+\alpha}{2} 
			}
		\end{split}
	\end{equation}
	for $t \ge 0$, where $1 \le p \le \infty$ for $\ell+\alpha \ge 0$.
	Indeed, 
	it is easy to see that 
	\begin{equation*} 
		\begin{split}
			\left\| 
			G_{0L}^{(\beta)}(t) 
			\right\|_{1} 
			& =
			\| 
			W_{0}^{(\beta)}(t)  \mathcal{F}^{-1} [e^{-\frac{\nu t|\xi|^{2}}{2}} \chi_{L} ]\|_{1} \\
			& \le 
			C \| 
			\mathcal{F}^{-1} [e^{-\frac{\nu t|\xi|^{2}}{2}} \chi_{L}  ]\|_{1} 
		 +
			C t \| 
			\nabla  \mathcal{F}^{-1} [e^{-\frac{\nu t|\xi|^{2}}{2}} \chi_{L} ]\|_{1} \\
			& \le C  + (1+t)^{\frac{1}{2}} \le C (1+t)^{\frac{1}{2}} 
		\end{split}
	\end{equation*}
	and 
\begin{equation*} 
	\begin{split}
		\left\| 
		G_{1L}^{(\beta)}(t) 
		\right\|_{1} 
		 =
		\| 
		 W_{1}^{(\beta)}(t) \mathcal{F}^{-1} [e^{-\frac{\nu t|\xi|^{2}}{2}} \chi_{L} ]\|_{1}
		\le 
		C t \| 
		 \mathcal{F}^{-1} [e^{-\frac{\nu t|\xi|^{2}}{2}} \chi_{L} ]\|_{1} 
		\le C t
	\end{split}
\end{equation*}
	 by \eqref{eq:2.17} and \eqref{eq:2.18}.
\end{proof}
Now we are in a position to prove the asymptotic behavior of the linear solution corresponding to \eqref{eq:1.1}. 
\begin{cor} \label{cor:7.4}
	Let $(f_{0}, f_{1}) \in \{ \dot{H}^{3} \cap \dot{W}^{1,1} \}^{3} \times  \{H^{1} \cap {L}^{1} \}^{3}$.
	Then the following estimates hold:  
	\begin{align}
		& \| \nabla^{\alpha} (u_{lin}(t)-G_{lin}(t)) \|_{2} = o(t^{-\frac{1}{4}-\frac{\alpha}{2}}), \quad 1 \le \alpha \le 3, \label{eq:7.12} \\
		& \| \nabla^{\alpha} (u_{lin}(t)-G_{lin}(t)) \|_{\infty} =o(t^{-\frac{3}{2}-\frac{\alpha}{2}}), \quad 0 \le \alpha \le 1, \label{eq:7.13}  \\
		& \| \nabla^{\alpha} (\partial_{t}u_{lin}(t) -H_{lin}(t)) \|_{2} =o(t^{-\frac{3}{4}-\frac{\alpha}{2}}), \quad 0 \le \alpha \le 2, \label{eq:7.14}  \\
		& \| \nabla^{2} (\partial_{t} u_{lin}(t) -H_{lin}(t)) \|_{p} =o( t^{-\frac{5}{2}(1-\frac{1}{p})-\frac{1}{2}} ), \quad 2 \le p \le 6, \label{eq:7.15}  \\
		& \| \nabla^{\alpha} (\partial_{t} u_{lin}(t) -H_{lin}(t))\|_{\infty}
		=o( t^{-2-\frac{\alpha}{2}}),\quad 0 \le \alpha \le 1, \label{eq:7.16}  \\
		& \| \partial_{t}^{2} u_{lin}(t) -\tilde{G}_{lin}(t) \|_{2} = o( t^{-\frac{5}{2}(1-\frac{1}{p})}), \quad 2 \le p \le 6 \label{eq:7.17} 
	\end{align}
	as $t \to \infty$.
\end{cor}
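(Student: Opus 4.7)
The plan is to apply Proposition~\ref{prop:7.1} (for the $\mathcal{P}$-projected components, where $\mathcal{P}_{ab}=\xi_a\xi_b/|\xi|^2$ acts as $\mathcal{R}_a\mathcal{R}_b$) and Corollary~\ref{cor:7.2} (for the isotropic components) termwise to the four summands in the representation
\[
u_{lin}(t)=\bigl(K_0^{(\sqrt{\lambda+2\mu})}(t)-K_0^{(\sqrt{\mu})}(t)\bigr)\mathcal{F}^{-1}[\mathcal{P}\hat f_0]+K_0^{(\sqrt{\mu})}(t)f_0+\bigl(K_1^{(\sqrt{\lambda+2\mu})}(t)-K_1^{(\sqrt{\mu})}(t)\bigr)\mathcal{F}^{-1}[\mathcal{P}\hat f_1]+K_1^{(\sqrt{\mu})}(t)f_1,
\]
taking $\ell=0,1,2$ time derivatives for the $u_{lin}$, $\partial_t u_{lin}$, and $\partial_t^2 u_{lin}$ statements respectively. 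Because the hypothesis only provides $\nabla f_0\in L^1$ (and not $f_0\in L^1$), I would first rewrite each $K_j^{(\beta)}(t)f_0$ in Fourier as $\mathcal{K}_j^{(\beta)}(t,\xi)(i\xi)^{-1}\widehat{\nabla f_0}(\xi)$ and then apply the approximation results to $g=\nabla f_0$ with $m_g=m_0$; the extra $\nabla^{-1}$ factor is precisely the one appearing in the definitions of $G_{0,lin}$, $H_{0,lin}$, $\tilde G_{0,lin}$. The $K_j^{(\beta)}(t)f_1$-contributions are handled directly with $g=f_1$, $m_g=m_1$.

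For \eqref{eq:7.12} (case $\ell=0$, $p=2$) Proposition~\ref{prop:7.1} and Corollary~\ref{cor:7.2} produce, up to an $o(t^{-1/4-\alpha/2})$ error, the low-frequency main terms proportional to $\mathcal{G}_0^{(\sqrt{\lambda+2\mu})}\mathcal{P}m_0$, $\mathcal{G}_0^{(\sqrt{\mu})}(\mathcal{I}_3-\mathcal{P})m_0$, $\mathcal{G}_1^{(\sqrt{\lambda+2\mu})}\mathcal{P}m_1$ and $\mathcal{G}_1^{(\sqrt{\mu})}(\mathcal{I}_3-\mathcal{P})m_1$; these reassemble exactly into $G_{0,lin}(t)+G_{1,lin}(t)=G_{lin}(t)$. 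The remaining high-frequency remainders $\|\nabla^{\alpha}K_{0H}^{(\beta)}(t)\nabla f_0\|_2$ and $\|\nabla^{\alpha}K_{1H}^{(\beta)}(t)f_1\|_2$ are $O(e^{-ct})$ by Lemma~\ref{Lem:4.2}(i) (for $1\le\alpha\le 3$), which is $o(t^{-1/4-\alpha/2})$. Estimate \eqref{eq:7.13} follows from the same decomposition with $p=\infty$, using \eqref{eq:4.9}--\eqref{eq:4.11} for the high-frequency tails.

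The estimates \eqref{eq:7.14}--\eqref{eq:7.16} are the $\ell=1$ case; then Proposition~\ref{prop:7.1} replaces $\partial_tK_0^{(\beta)}$ by a $-\beta^2\nabla G_1^{(\beta)}$-type kernel and $\partial_t K_1^{(\beta)}$ by a $G_0^{(\beta)}$-type kernel, and after collecting Lam\'e speeds and $\mathcal{P}/(\mathcal{I}_3-\mathcal{P})$ parts the main terms coincide with $H_{0,lin}+H_{1,lin}=H_{lin}$. For \eqref{eq:7.15} with $2\le p\le 6$ and \eqref{eq:7.16} with $p=\infty$ I would control the high-frequency remainders via the time-singular bounds \eqref{eq:4.12}--\eqref{eq:4.13}, together with the interpolation $\|\cdot\|_p\le\|\cdot\|_2^{3/p-1/2}\|\cdot\|_6^{3(1/2-1/p)}$ already used in the proof of Theorem~\ref{thm:1.3}. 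Finally, for \eqref{eq:7.17} (case $\ell=2$) Proposition~\ref{prop:7.1} and Corollary~\ref{cor:7.2} deliver main profiles proportional to $-\beta^2\Delta G_0^{(\beta)}$ and $-\beta^2\Delta G_1^{(\beta)}$, which match the definition of $\tilde G_{lin}$, while the remainders are controlled by \eqref{eq:4.10} and \eqref{eq:4.14}.

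The main obstacle will be the careful bookkeeping required to verify that the sum of the leading-order profiles produced by Proposition~\ref{prop:7.1} and Corollary~\ref{cor:7.2}---coming from the two Lam\'e speeds $\sqrt{\lambda+2\mu}$ and $\sqrt{\mu}$, the $\mathcal{P}/(\mathcal{I}_3-\mathcal{P})$ decomposition, and the correct signs $(-1)^{\ell/2}$ or $(-1)^{(\ell\pm 1)/2}$ with powers $\beta^\ell$ or $\beta^{\ell\pm 1}$---coincides precisely with the explicit expressions $G_{lin}$, $H_{lin}$, $\tilde G_{lin}$ in \eqref{eq:7.1}. Once that matching is established, the $o(\cdot)$ decay rates follow immediately from the approximation lemmas in Section~3 and the high-frequency smoothing estimates in Section~4.
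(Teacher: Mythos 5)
Your proposal follows essentially the same route as the paper: decompose $u_{lin}$ into its $\mathcal{P}$-projected and isotropic pieces for each of $K_0,K_1$, trade $f_0$ for $\nabla f_0$ via the $\nabla^{-1}$ factor so that only $m_0=\int\nabla f_0$ and $\dot H^3\cap\dot W^{1,1}$ regularity are needed, apply Proposition \ref{prop:7.1} and Corollary \ref{cor:7.2} with $\ell=0,1,2$ to identify the profiles $G_{lin}$, $H_{lin}$, $\tilde G_{lin}$, and absorb the high-frequency remainders with Lemma \ref{Lem:4.2}. The only caveat is notational: the remainder for the $f_0$-terms should be read as $\nabla^{\alpha-1}K_{0H}^{(\beta)}(t)\mathcal{R}_a\mathcal{R}_b\nabla f_{0k}=\nabla^{\alpha}K_{0H}^{(\beta)}(t)\mathcal{R}_a\mathcal{R}_b f_{0k}$ (so that \eqref{eq:4.6} needs only $\nabla^{3}f_0\in L^2$, not $\nabla^{4}f_0$), which is exactly how the paper writes it in \eqref{eq:7.18}.
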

\begin{proof}[Proof of Corollary \ref{cor:7.4}]
	At first, we decompose the linear solution $u_{lin}(t)$ into two parts:  
	\begin{equation*} 
		\begin{split}
			u_{lin}(t) =u_{0,lin}(t)+u_{1,lin}(t),
		\end{split}
	\end{equation*}
	where $u_{j,lin}(t) :=(K_{j}^{(\sqrt{\lambda+2 \mu})}(t)-  K_{j}^{(\sqrt{\mu})}(t)) \mathcal{F}^{-1} [\mathcal{P} \hat{f}_{j}] + K_{j}^{(\sqrt{\mu})}(t)f_{j}$ for $j=0,1$.
	When $\alpha \ge 1$, 
	we observe that 
	\begin{equation*} 
		\begin{split}
			\nabla^{\alpha} K_{0}^{(\beta)}(t)\mathcal{F}^{-1} [\mathcal{P} \hat{f}_{0}] 
			= \sum_{k=1}^{3} \nabla^{\alpha-1} K_{0}^{(\beta)}(t) 
			\begin{pmatrix}
				\mathcal{R}_{1} \mathcal{R}_{k} \nabla f_{0k} \\
				\mathcal{R}_{2} \mathcal{R}_{k} \nabla f_{0k} \\
				\mathcal{R}_{3} \mathcal{R}_{k} \nabla f_{0k} 
			\end{pmatrix}
		\end{split}
	\end{equation*}
	and 
	\begin{equation*} 
		\begin{split}
			\nabla^{\alpha} \nabla^{-1}  \mathcal{F}^{-1} \left[ 
			\mathcal{G}^{(\beta)}_{0}(t,\xi) \mathcal{P} m_{0}
			\right]
			= \sum_{k=1}^{3} \nabla^{\alpha-1} 
			\begin{pmatrix}
				m_{0k}\mathcal{R}_{1} \mathcal{R}_{k} G_{0}^{(\beta)}(t,x) \\
				m_{0k} \mathcal{R}_{2} \mathcal{R}_{k}G_{0}^{(\beta)}(t,x) \\
				m_{0k} \mathcal{R}_{3} \mathcal{R}_{k} G_{0}^{(\beta)}(t,x)
			\end{pmatrix}.
		\end{split}
	\end{equation*}
	In what follows, we denote the $j$-th component of the 3-dimensional vector $A$ by $(A)_{(j)}$ for $j=1,2,3$.
	Then we see that 
	\begin{equation} \label{eq:7.18}
		\begin{split}
			& \left\| \nabla^{\alpha} 
			\left\{  (K_{0}^{(\beta)}(t)\mathcal{F}^{-1} [\mathcal{P} \hat{f}_{0}])_{(j)}
			-  
			\left(
			\nabla^{-1}
			\mathcal{F}^{-1} \left[ 
			\mathcal{G}^{(\beta)}_{0}(t,\xi) \mathcal{P} m_{0}
			\right]
			\right)_{(j)}
			\right\} 
			\right\|_{2} \\
			& \le C \sum_{k=1}^{3} 
			\left\|
			\nabla^{\alpha-1} \{ 
			K_{0}^{(\beta)}(t) \mathcal{R}_{j} \mathcal{R}_{k} \nabla f_{0k}-m_{0k} \mathcal{R}_{j} \mathcal{R}_{k}G_{0}^{(\beta)}(t)
			\}
			\right\|_{2} \\
			& \le o(t^{-\frac{1}{4}-\frac{\alpha}{2}}) + C \sum_{k=1}^{3} \left\|
			\nabla^{\alpha} 
			K_{0H}^{(\beta)}(t) \mathcal{R}_{j} \mathcal{R}_{k} f_{0k}
			\right\|_{2}=o(t^{-\frac{1}{4}-\frac{\alpha}{2}})
		\end{split}
	\end{equation}
	as $t \to \infty$ for $0 \le \alpha \le 3$ by \eqref{eq:7.2} and \eqref{eq:4.6}.
	Similarly, noting that 
	\begin{equation*} 
		\begin{split}
			\nabla^{\alpha} K_{1}^{(\beta)}(t)\mathcal{F}^{-1} [\mathcal{P} \hat{f}_{0}] 
			= \sum_{k=1}^{3} \nabla^{\alpha} K_{1}^{(\beta)}(t) 
			\begin{pmatrix}
				\mathcal{R}_{1} \mathcal{R}_{k} f_{1k} \\
				\mathcal{R}_{2} \mathcal{R}_{k} f_{1k} \\
				\mathcal{R}_{3} \mathcal{R}_{k} f_{1k} 
			\end{pmatrix}
		\end{split}
	\end{equation*}
	and 
	\begin{equation*} 
		\begin{split}
			\nabla^{\alpha}  \mathcal{F}^{-1} \left[ 
			\mathcal{G}^{(\beta)}_{1}(t,\xi) \mathcal{P} m_{1}
			\right]
			= \sum_{k=1}^{3} \nabla^{\alpha} 
			\begin{pmatrix}
				m_{1k}\mathcal{R}_{1} \mathcal{R}_{k} G_{1}^{(\beta)}(t,x) \\
				m_{1k} \mathcal{R}_{2} \mathcal{R}_{k}G_{1}^{(\beta)}(t,x) \\
				m_{1k} \mathcal{R}_{3} \mathcal{R}_{k} G_{1}^{(\beta)}(t,x)
			\end{pmatrix},
		\end{split}
	\end{equation*}
	 we have
	\begin{equation} \label{eq:7.19}
		\begin{split}
			& \left\| \nabla^{\alpha} 
			\left\{  (K_{1}^{(\beta)}(t)\mathcal{F}^{-1} [\mathcal{P} \hat{f}_{1}])_{(j)}
			-  
			\left(\mathcal{F}^{-1} \left[ 
			\mathcal{G}^{(\beta)}_{1}(t,\xi) \mathcal{P} m_{1}
			\right]
			\right)_{(j)}
			\right\} 
			\right\|_{2} =o(t^{-\frac{1}{4}-\frac{\alpha}{2}})
		\end{split}
	\end{equation}
	as $t \to \infty$ for $0 \le \alpha \le 3$ by \eqref{eq:7.4} and \eqref{eq:4.7}.
	The remaining parts
	\begin{equation} \label{eq:7.20}
		\begin{split}
			& \left\| \nabla^{\alpha} 
			\left\{  (K_{0}^{(\beta)}(t)f_{0})_{(j)}
			-  
			\left(  \nabla^{-1} \mathcal{F}^{-1} \left[ 
			\mathcal{G}^{(\beta)}_{0}(t,\xi) m_{0}
			\right]
			\right)_{(j)}
			\right\} 
			\right\|_{2} =o(t^{-\frac{1}{4}-\frac{\alpha}{2}})
		\end{split}
	\end{equation}
	and 
	\begin{equation} \label{eq:7.21}
		\begin{split}
			& \left\| \nabla^{\alpha} 
			\left\{  (K_{1}^{(\beta)}(t) f_{1})_{(j)}
			-  
			\left(
			\mathcal{F}^{-1} \left[ 
			\mathcal{G}^{(\beta)}_{1}(t,\xi)  m_{1}
			\right]
			\right)_{(j)}
			\right\} 
			\right\|_{2} =o(t^{-\frac{1}{4}-\frac{\alpha}{2}})
		\end{split}
	\end{equation}
	as $t \to \infty$ are shown by the same way with the use of \eqref{eq:7.9}, \eqref{eq:7.11}, \eqref{eq:4.6} and \eqref{eq:4.7}.
	Therefore we get
	\begin{align} \label{eq:7.22}
		& \| \nabla^{\alpha} (u_{0,lin}(t)-G_{0,lin}(t)) \|_{2} = o(t^{-\frac{1}{4}-\frac{\alpha}{2}}), \quad 1 \le \alpha \le 3
	\end{align}
	by \eqref{eq:7.18} and \eqref{eq:7.20}, and   
	\begin{align} \label{eq:7.23}
		& \| \nabla^{\alpha} (u_{1,lin}(t)-G_{1,lin}(t)) \|_{2} = o(t^{-\frac{1}{4}-\frac{\alpha}{2}}), \quad 1 \le \alpha \le 3
	\end{align}
	by \eqref{eq:7.19} and \eqref{eq:7.21}, as $t \to \infty$.
	The estimates \eqref{eq:7.22} and \eqref{eq:7.23} imply the desired estimate \eqref{eq:7.12}.
	
	%
	The estimates \eqref{eq:7.13}-\eqref{eq:7.15} are proved by the similar way to \eqref{eq:7.12}. 
	We omit the proof. 
%

Finally we prove the estimate \eqref{eq:7.17}.
	Now we observe that
	\begin{equation} \label{eq:7.24}
		\begin{split}
			& \|
			 \partial_{t}^{2} K_{0}^{(\beta)}(t)\mathcal{R}_{a} \mathcal{R}_{b} g
			+ \Delta \nabla^{-1}
			\beta^{2} m_{\nabla g} \mathcal{R}_{a} \mathcal{R}_{b} G^{(\beta)}_{0}(t )
			\|_{p} \\
			& \le \|
			\partial_{t}^{2} \mathbb{K}_{00}^{(\beta)}(t)\ast g
			+ \nabla^{2}
			\beta^{2} \mathbb{G}^{(\beta)}_{0}(t) \ast g
			\|_{p} +\|
			 \nabla
			\beta^{2} (\mathbb{G}^{(\beta)}_{0}(t) \ast \nabla g-m_{\nabla g} \mathbb{G}^{(\beta)}_{0}(t) )
			\|_{p}  \\
			& 
			+C \|
			\nabla^{2}  \partial_{t}^{2} \mathbb{K}_{1}^{(\beta)}(t)\ast g
			\|_{p} \\
			&
			+C \sum_{k=M,H} 
			\{ 
			\|
			\partial_{t}^{2} K_{0k}^{(\beta)}(t)\mathcal{R}_{a} \mathcal{R}_{b} g
			\|_{p}
			+ \|
			\Delta \nabla^{-1}
			\mathcal{R}_{a} \mathcal{R}_{b} G^{(\beta)}_{0k}(t)
			\|_{p} \} \\
			& \le C(1+t)^{-\frac{5}{2}(1-\frac{1}{p})-\frac{1}{2}} \| \nabla g\|_{1} +  o(t^{-\frac{5}{2}(1-\frac{1}{p})}) +Ce^{-ct} \| \nabla^{3}  g \|_{2} =o(t^{-\frac{5}{2}(1-\frac{1}{p})}) 
		\end{split}
	\end{equation}
	as $t \to \infty$,
	where we used the estimates \eqref{eq:3.4}, \eqref{eq:3.52}, \eqref{eq:3.49}, \eqref{eq:4.10} and \eqref{eq:4.14}.
	Thus we obtain 
	\begin{equation*} 
		\begin{split}
			& \left\|
			  ( \partial_{t}^{2} K_{0}^{(\beta)}(t)\mathcal{F}^{-1} [\mathcal{P} \hat{f}_{0}])_{(j)}
			+ \Delta \nabla^{-1}
			\beta^{2} 
			\left(\mathcal{F}^{-1} \left[ 
			\mathcal{G}^{(\beta)}_{0}(t,\xi) \mathcal{P} m_{0}
			\right]
			\right)_{(j)}
			\right\|_{p} \\
			& \le C \sum_{k=1}^{3} 
			\left\|
			\partial_{t}^{2}  K_{0}^{(\beta)}(t) \mathcal{R}_{j} \mathcal{R}_{k} \nabla f_{0k}+m_{0k} \Delta \nabla^{-1}
			\beta^{2} \mathcal{R}_{j} \mathcal{R}_{k}G_{1}^{(\beta)}(t)
			\right\|_{p} =o(t^{-\frac{5}{2}(1-\frac{1}{p})})
		\end{split}
	\end{equation*}
	as $t \to \infty$, by \eqref{eq:7.24}.
	On the other hand, it follows from the estimates \eqref{eq:7.4} and \eqref{eq:4.14} that
	\begin{equation*} 
		\begin{split}
			& \left\| 
			 ( \partial_{t}^{2} K_{1}^{(\beta)}(t)\mathcal{F}^{-1} [\mathcal{P} \hat{f}_{1}])_{(j)}
			+ \beta^{2} \nabla^{2}
			\left(\mathcal{F}^{-1} \left[ 
			\mathcal{G}^{(\beta)}_{1}(t,\xi) \mathcal{P} m_{1}
			\right]
			\right)_{(j)}
			\right\|_{p} \\
			& \le C \sum_{k=1}^{3} 
			\left\|
			\partial_{t}^{2}  K_{1}^{(\beta)}(t) \mathcal{R}_{j} \mathcal{R}_{k} f_{1k}+m_{1k} \beta^{2} \nabla^{2} \mathcal{R}_{j} \mathcal{R}_{k}G_{1}^{(\beta)}(t)
			\right\|_{p} \\
			& \le o(t^{-\frac{5}{2}(1-\frac{1}{p})}) + C \sum_{k=1}^{3} \left\|
			\partial_{t}^{2} K_{1H}^{(\beta)}(t) \mathcal{R}_{j} \mathcal{R}_{k} f_{1k}
			\right\|_{p}=o(t^{-\frac{5}{2}(1-\frac{1}{p})})
		\end{split}
	\end{equation*}
	as $t \to \infty$. 
	Again, we also get the estimates 
	\begin{equation*} 
		\begin{split}
			\left\| 
			  ( \partial_{t}^{2} K_{0}^{(\beta)}(t) f_{0})_{(j)}
			+ \Delta \nabla^{-1}
			\beta^{2} 
			\left(\mathcal{F}^{-1} \left[ 
			\mathcal{G}^{(\beta)}_{0}(t,\xi) m_{0}
			\right]
			\right)_{(j)}
			\right\|_{p} =o(t^{-\frac{5}{2}(1-\frac{1}{p})})
		\end{split}
	\end{equation*}
	and 
	\begin{equation*} 
		\begin{split}
			\left\| 
			 ( \partial_{t}^{2} K_{1}^{(\beta)}(t) f_{1})_{(j)}
			+ \beta^{2} \nabla^{2}
			\left(\mathcal{F}^{-1} \left[ 
			\mathcal{G}^{(\beta)}_{1}(t,\xi) m_{1}
			\right]
			\right)_{(j)} 
			\right\|_{p} =o(t^{-\frac{5}{2}(1-\frac{1}{p})})
		\end{split}
	\end{equation*}
	as $t \to \infty$, by the same method as in \eqref{eq:7.20} and \eqref{eq:7.21}.
	Summing up these estimates, 
	we conclude the estimates \eqref{eq:7.17}, which completes the proof.
\end{proof}

\subsection{Asymptotic profiles of nonlinear terms}
In this subsection, we study the asymptotic behavior of non-homogeneous term, which is useful to obtain the asymptotic profile of the nonlinear term in \eqref{eq:1.1}. 
\begin{prop} \label{prop:7.5}
	Let $\alpha, \ell, m \ge 0$.
	Suppose that $f \in L^{1}(0,\infty;L^{1}(\R^{3}))$ with $\| f(t) \|_{1} \le C (1+t)^{-2}$.
	Then the following estimates hold as $t \to \infty$:
	\begin{equation} \label{eq:7.25}
		\begin{split}
			& \left\| \nabla^{\alpha} \left(\int_{0}^{\frac{t}{2}} \partial_{t}^{\ell} \mathbb{K}^{(\beta)}_{1}(t-\tau) \ast f(\tau) d \tau 
			-(-1)^{\frac{\ell}{2}} \beta^{\ell} \nabla^{\ell} 
			\int_{0}^{\frac{t}{2}} 
			\int_{\R^{3}} f(\tau,y) dy d \tau\,  
			\mathbb{G}_{1}(t)
			\right)
			\right\|_{p} \\
			& = o(t^{-\frac{5}{2}(1-\frac{1}{p})+1-\frac{\alpha+\ell}{2}})
		\end{split}
	\end{equation}
	for $\ell=2m$ and 
	\begin{equation} \label{eq:7.26}
		\begin{split}
			& \left\| \nabla^{\alpha} \left(\int_{0}^{\frac{t}{2}} \partial_{t}^{\ell} \mathbb{K}^{(\beta)}_{1}(t-\tau) \ast f(\tau) d \tau 
			-(-1)^{\frac{\ell-1}{2}} \beta^{\ell-1} \nabla^{\ell-1} 
			\int_{0}^{\frac{t}{2}} 
			\int_{\R^{3}} f(\tau,y) dy d \tau\,  
			\mathbb{G}_{0}(t)
			\right)
			\right\|_{p} \\
			& = o(t^{-\frac{5}{2}(1-\frac{1}{p})+1-\frac{\alpha+\ell}{2}})
		\end{split}
	\end{equation}
	for $\ell=2m+1$, where $1 <p \le \infty$ for $\ell+\alpha=0$ and $1 \le p \le \infty$ for $\ell+\alpha \ge 1$.
\end{prop}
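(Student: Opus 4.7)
The plan is to reduce Proposition \ref{prop:7.5} to the single-time approximation results of Section 3 via a three-term decomposition inside the Duhamel integral. For $\ell = 2m$, I write
\begin{equation*}
\begin{split}
& \nabla^{\alpha}\int_{0}^{t/2} \partial_{t}^{\ell} \mathbb{K}^{(\beta)}_{1}(t-\tau)\ast f(\tau)\, d\tau - (-1)^{\frac{\ell}{2}}\beta^{\ell}\nabla^{\alpha+\ell}\mathbb{G}_{1}^{(\beta)}(t)\int_{0}^{t/2}\int_{\R^{3}} f(\tau,y)\, dy\, d\tau \\
& = \mathrm{I} + \mathrm{II} + \mathrm{III},
\end{split}
\end{equation*}
where $\mathrm{I}$ measures the error of replacing $\partial_{t}^{\ell}\mathbb{K}_{1}^{(\beta)}(t-\tau)\ast f(\tau)$ by $(-1)^{\ell/2}\beta^{\ell}\nabla^{\ell}\mathbb{G}_{1}^{(\beta)}(t-\tau)\ast f(\tau)$, $\mathrm{II}$ is the error of replacing $\mathbb{G}_{1}^{(\beta)}(t-\tau)\ast f(\tau)$ by $m_{f(\tau)}\mathbb{G}_{1}^{(\beta)}(t-\tau)$ with $m_{f(\tau)}:=\int_{\R^{3}} f(\tau,y)\, dy$, and $\mathrm{III}$ is the error of shifting the time argument of the profile from $t-\tau$ to $t$. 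The case $\ell = 2m+1$ is handled by the same decomposition, with the leading profile replaced by $(-1)^{(\ell-1)/2}\beta^{\ell-1}\nabla^{\ell-1}\mathbb{G}_{0}^{(\beta)}$.

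The terms $\mathrm{I}$ and $\mathrm{III}$ are controlled directly by the results of Section 3. For $\mathrm{I}$, Proposition \ref{Prop:3.1} (with $g=f(\tau)$, $q=1$, $\tilde{\alpha}=\tilde{\ell}=0$) bounds the integrand in $L^{p}$ by $C(1+t-\tau)^{-\frac{5}{2}(1-\frac{1}{p})+\frac{1}{2}-\frac{\alpha+\ell}{2}}\|f(\tau)\|_{1}$; using $t-\tau \ge t/2$ on $[0,t/2]$ together with $\int_{0}^{\infty}\|f(\tau)\|_{1}\, d\tau < \infty$ (guaranteed by $\|f(\tau)\|_{1}\le C(1+\tau)^{-2}$) yields $\|\mathrm{I}\|_{p}\le Ct^{-\frac{5}{2}(1-\frac{1}{p})+\frac{1}{2}-\frac{\alpha+\ell}{2}}$, which is $o(t^{-\frac{5}{2}(1-\frac{1}{p})+1-\frac{\alpha+\ell}{2}})$. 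For $\mathrm{III}$, the mean value theorem in time combined with Lemma \ref{Lem:3.5} gives $\|\nabla^{\alpha+\ell}[\mathbb{G}_{1}^{(\beta)}(t-\tau)-\mathbb{G}_{1}^{(\beta)}(t)]\|_{p}\le C\tau\, t^{-\frac{5}{2}(1-\frac{1}{p})+\frac{1}{2}-\frac{\alpha+\ell}{2}}$ for $\tau\in[0,t/2]$; integrating against $|m_{f(\tau)}|\le C(1+\tau)^{-2}$ produces a harmless logarithmic factor, so that $\|\mathrm{III}\|_{p}\le Ct^{-\frac{5}{2}(1-\frac{1}{p})+\frac{1}{2}-\frac{\alpha+\ell}{2}}\log(1+t)$, again of the required $o$-order.

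The main obstacle is $\mathrm{II}$, which demands extending the proof of Proposition \ref{Prop:3.6} to an integral in $\tau$ with an $o$-rate that is uniform enough to survive after integration. I would mimic that proof by writing
\begin{equation*}
\mathbb{G}_{1}^{(\beta)}(t-\tau)\ast f(\tau) - m_{f(\tau)}\mathbb{G}_{1}^{(\beta)}(t-\tau) = \int_{\R^{3}} [\mathbb{G}_{1}^{(\beta)}(t-\tau,x-y)-\mathbb{G}_{1}^{(\beta)}(t-\tau,x)]f(\tau,y)\, dy,
\end{equation*}
then splitting the inner integral into the regions $|y|\le(t-\tau)^{1/4}$ and $|y|>(t-\tau)^{1/4}$. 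The near-origin part is bounded via the mean value theorem in $x$ together with \eqref{eq:3.51}, contributing (after $\tau$-integration) a term of order $Ct^{-\frac{5}{2}(1-\frac{1}{p})+\frac{3}{4}-\frac{\alpha+\ell}{2}}$, which is $o$ of the target since $\tfrac{3}{4}<1$. The far part is bounded by $C\|\nabla^{\alpha+\ell}\mathbb{G}_{1}^{(\beta)}(t-\tau)\|_{p}\int_{|y|>(t-\tau)^{1/4}}|f(\tau,y)|\, dy$; since $(t-\tau)^{1/4}\ge (t/2)^{1/4}$ on the integration interval, after $\tau$-integration it contributes at most
\begin{equation*}
Ct^{-\frac{5}{2}(1-\frac{1}{p})+1-\frac{\alpha+\ell}{2}}\int_{0}^{\infty}\int_{|y|>(t/2)^{1/4}}|f(\tau,y)|\, dy\, d\tau.
\end{equation*}
The crucial point is that the double integral tends to $0$ as $t\to\infty$ by the dominated convergence theorem applied to $f\in L^{1}(0,\infty;L^{1}(\R^{3}))$; this is precisely the ingredient that converts the $O$-bound into the required $o$-bound and completes the proof.
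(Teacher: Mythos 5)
Your proof is correct and follows essentially the same route as the paper: the same three-term telescoping (the paper merely performs the time-shift $t-\tau\to t$ before the moment replacement, whereas you do the moment replacement at time $t-\tau$ first — an immaterial reordering), with Proposition \ref{Prop:3.1} controlling the first term, the mean value theorem in $t$ plus \eqref{eq:3.51} giving the logarithmic bound for the time shift, and the split at radius $t^{1/4}$ combined with dominated convergence for $f\in L^{1}(0,\infty;L^{1})$ supplying the decisive $o$-rate, exactly as in the paper's treatment of $I_{1}$, $I_{2}$, $I_{3}$.
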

\begin{proof}[Proof of Proposition \ref{prop:7.5}]
	We only prove the estimate \eqref{eq:7.25}.
	The estimate \eqref{eq:7.26} is shown by a similar way.
	At first, we decompose the integrand into three parts:
	\begin{equation} \label{eq:7.27}
		\begin{split}
			& \nabla^{\alpha} \left(\int_{0}^{\frac{t}{2}} \partial_{t}^{\ell} \mathbb{K}^{(\beta)}_{1}(t-\tau) \ast f(\tau,x) d \tau 
			-(-1)^{\frac{\ell}{2}} \beta^{\ell} \nabla^{\ell} 
			\int_{0}^{\frac{t}{2}} 
			\int_{\R^{3}} f(\tau,y) dy d \tau\,  
			\mathbb{G}_{1}(t,x)
			\right)
			 \\
			& =I_{1}+I_{2}+I_{3},
		\end{split}
	\end{equation}
where
	\begin{equation*} 
		\begin{split}
			I_{1}& :=\nabla^{\alpha} \int_{0}^{\frac{t}{2}} \left( \partial_{t}^{\ell} \mathbb{K}^{(\beta)}_{1}(t-\tau) 
			-(-1)^{\frac{\ell}{2}} \beta^{\ell} \nabla^{\ell} 
			\mathbb{G}_{1}(t-\tau) \right) \ast f(\tau,x)
			d \tau, \\
			I_{2}& :=(-1)^{\frac{\ell}{2}} \beta^{\ell} \nabla^{\alpha+\ell} \int_{0}^{\frac{t}{2}} \left( \mathbb{G}^{(\beta)}_{1}(t-\tau) 
			- 
			\mathbb{G}_{1}(t) \right) \ast f(\tau,x) 
			d \tau, \\
			I_{3}& := (-1)^{\frac{\ell}{2}} \beta^{\ell} \nabla^{\alpha+\ell} \int_{0}^{\frac{t}{2}} \left( \mathbb{G}_{1}(t)\ast f(\tau,x) 
			- 
			\int_{\R^{3}} f(\tau, y) dy d \tau\, \mathbb{G}_{1}(t,x) \right).
		\end{split}
	\end{equation*}
	For $I_{1}$, we apply the estimate \eqref{eq:3.6} to have 
	\begin{equation} \label{eq:7.28}
		\begin{split}
			\|I_{1} \|_{p} & \le \left\| \nabla^{\alpha} \int_{0}^{\frac{t}{2}} \left( \partial_{t}^{\ell} \mathbb{K}^{(\beta)}_{1}(t-\tau) 
			-(-1)^{\frac{\ell}{2}} \beta^{\ell} \nabla^{\ell} 
			\mathbb{G}_{1}(t-\tau) \right) \ast f(\tau,x)
			d \tau \right\|_{p} \\
			& \le C  \int_{0}^{\frac{t}{2}} (1+t-\tau)^{-\frac{5}{2}(1-\frac{1}{p})+\frac{1}{2}-\frac{\alpha+\ell}{2}} \|f(\tau) \|_{1}
			d \tau \\
			& \le C (1+t)^{-\frac{5}{2}(1-\frac{1}{p})+\frac{1}{2}-\frac{\alpha+\ell}{2}} \int_{0}^{\frac{t}{2}}  (1+\tau)^{-2} d \tau \\
			& \le (1+t)^{-\frac{5}{2}(1-\frac{1}{p})+\frac{1}{2}-\frac{\alpha+\ell}{2}}.
		\end{split}
	\end{equation}
	To obtain the estimate for $I_{2}$, using the mean value theorem on $t$, we see that 
	\begin{equation*}
		\begin{split}
			\mathbb{G}_{1}^{(\beta)}(t-\tau, x-y)-\mathbb{G}_{1}^{(\beta)}(t,x-y) = (-\tau) \partial_{t} 
			\mathbb{G}_{1}^{(\beta)}(t-\theta \tau, x-y) 
		\end{split}
	\end{equation*}
	for some $\theta \in [0,1]$.
	Therefore we have 
	\begin{equation} \label{eq:7.29}
		\begin{split}
			\| I_{2} \|_{p} & \le \int_{0}^{\frac{t}{2}} \tau \| \nabla^{\alpha+\ell} \partial_{t} 
			\mathbb{G}_{1}^{(\beta)}(t-\theta \tau) \ast f(\tau) \|_{p} d \tau \\
			& \le C\int_{0}^{\frac{t}{2}} \tau (t-\tau)^{-\frac{5}{2}(1-\frac{1}{p})+\frac{1}{2}-\frac{\alpha +\ell}{2}}  \| f(\tau) \|_{1} d \tau \\
			& \le Ct^{-\frac{5}{2}(1-\frac{1}{p})+\frac{1}{2}-\frac{\alpha+\ell}{2}} \int_{0}^{\frac{t}{2}} \tau (1+\tau)^{-2} d \tau \\
			& \le Ct^{-\frac{5}{2}(1-\frac{1}{p})+\frac{1}{2}-\frac{\alpha+\ell}{2}} \log(t+2)
		\end{split}
	\end{equation}
	by \eqref{eq:3.51}.
	The proof is completed by showing the estimate for $I_{3}$.
	Now we divide $I_{3}$ into 2 parts:
	\begin{equation} \label{eq:7.30}
		\begin{split}
			I_{3} & := I_{31}+I_{32},
		\end{split}
	\end{equation}
	where 
	\begin{equation*}
		\begin{split}
			I_{31} & :=  (-1)^{\frac{\ell}{2}} \beta^{\ell} \nabla^{\alpha+\ell} \int_{0}^{\frac{t}{2}}
			\int_{|y| \le t^{\frac{1}{4}}}
			\left( 
			\mathbb{G}^{(\beta)}_{1}(t,x-y)-\mathbb{G}^{(\beta)}_{1}(t,x)
			\right)
			f(\tau,y) dy d \tau, \\ 
			I_{32} & := (-1)^{\frac{\ell}{2}} \beta^{\ell} \nabla^{\alpha+\ell}  \int_{0}^{\frac{t}{2}}
			\int_{|y| \ge t^{\frac{1}{4}}}
			\left( 
			\mathbb{G}^{(\beta)}_{1}(t,x-y)-\mathbb{G}^{(\beta)}_{1}(t,x)
			\right)
			f(\tau,y) dy d \tau.
		\end{split}
	\end{equation*}
	%
	We apply the same argument in the proof of Proposition \ref{Prop:3.6} to have 
	\begin{equation} \label{eq:7.31}
		\begin{split}
			\| I_{31} \|_{p} \le C (1+t)^{-\frac{5}{2}(1-\frac{1}{p})+\frac{3}{4}-\frac{\alpha+\ell}{2}}
		\end{split}
	\end{equation}
and 
\begin{equation} \label{eq:7.32}
		\begin{split}
			& \| I_{32}\|_{p}=o(t^{-\frac{5}{2}(1-\frac{1}{p})+1-\frac{\alpha+\ell}{2}})
		\end{split}
	\end{equation}
	as $t \to \infty$, where we used the fact that 
	$
	\| f \|_{L^{1}(0,\infty;L^{1}(\R^{3}))}  <\infty
	$
	and then
	$$
	\lim_{t\to \infty}
	\int_{0}^{\infty}
	\int_{|y| \ge t^{\frac{1}{4}}}
	| f(\tau,y)| dy d \tau =0.
	$$
	Therefore summing up \eqref{eq:7.27}-\eqref{eq:7.32}, 
	we can conclude the estimate \eqref{eq:7.25}, 
	which proves the proposition.
	%
\end{proof}
The following estimates are also useful to obtain the asymptotic profiles of solutions to \eqref{eq:1.1}.
\begin{cor} \label{cor:7.6}
	Under the assumption on Proposition \ref{prop:7.5}, the following estimates holds:
	\begin{equation} \label{eq:7.33}
		\begin{split}
			& \left\| \nabla^{\alpha} \left(\int_{0}^{\frac{t}{2}} \partial_{t}^{\ell} K^{(\beta)}_{1L}(t-\tau) f(\tau) d \tau 
			-(-1)^{\frac{\ell}{2}} \beta^{\ell} \nabla^{\ell} 
			\int_{0}^{\frac{t}{2}} 
			\int_{\R^{3}} f(\tau,y) dy d \tau\,  
			G_{1L}(t)
			\right)
			\right\|_{p} \\
			& = o(t^{-\frac{5}{2}(1-\frac{1}{p})+1-\frac{\alpha+\ell}{2}})
		\end{split}
	\end{equation}
	for $\ell=2m$ and 
	\begin{equation} \label{eq:7.34}
		\begin{split}
			& \left\| \nabla^{\alpha} \left(\int_{0}^{\frac{t}{2}} \partial_{t}^{\ell} K^{(\beta)}_{1L}(t-\tau) \ast f(\tau) d \tau 
			-(-1)^{\frac{\ell-1}{2}} \beta^{\ell-1} \nabla^{\ell-1} 
			\int_{0}^{\frac{t}{2}} 
			\int_{\R^{3}} f(\tau,y) dy d \tau\,  
			G_{0L}(t)
			\right)
			\right\|_{p} \\
			& = o(t^{-\frac{5}{2}(1-\frac{1}{p})+1-\frac{\alpha+\ell}{2}})
		\end{split}
	\end{equation}
	for $\ell=2m+1$, where $1 \le p \le \infty$.
\end{cor}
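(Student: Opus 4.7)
The plan is to follow the three-part decomposition used in the proof of Proposition \ref{prop:7.5} line-by-line, but with the convolution operators $\partial_t^\ell \mathbb{K}_1^{(\beta)}$ and $\nabla^\ell \mathbb{G}_1^{(\beta)}$ (each of which carries the double Riesz factor $\mathcal{R}_a \mathcal{R}_b$) replaced by the bare low-frequency operators $\partial_t^\ell K_{1L}^{(\beta)}$ and $\nabla^\ell G_{1L}^{(\beta)}$. Removing the Riesz factors is precisely what allows the full range $1 \le p \le \infty$ in the conclusion.

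Explicitly, for $\ell = 2m$ I will write
\begin{equation*}
\int_{0}^{t/2} \partial_t^\ell K_{1L}^{(\beta)}(t-\tau) f(\tau)\,d\tau - (-1)^{\ell/2}\beta^\ell \nabla^\ell \int_0^{t/2}\!\!\int_{\R^3} f(\tau,y)\,dy\,d\tau\, G_{1L}^{(\beta)}(t) = J_1 + J_2 + J_3,
\end{equation*}
where $J_1$ replaces $\partial_t^\ell K_{1L}^{(\beta)}(t-\tau)$ by $(-1)^{\ell/2}\beta^\ell \nabla^\ell G_{1L}^{(\beta)}(t-\tau)$ inside the $\tau$-integral, $J_2$ swaps $G_{1L}^{(\beta)}(t-\tau)$ for $G_{1L}^{(\beta)}(t)$, and $J_3$ replaces the spatial convolution $G_{1L}^{(\beta)}(t)\ast f(\tau,\cdot)$ by its total-mass approximation $\bigl(\int_{\R^3} f(\tau,y)\,dy\bigr)\, G_{1L}^{(\beta)}(t)$. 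For $\ell=2m+1$ the approximating kernel is $(-1)^{(\ell-1)/2}\beta^{\ell-1}\nabla^{\ell-1} G_{0L}^{(\beta)}$, with analogous $J_2$ and $J_3$ built from $G_{0L}^{(\beta)}$; the argument is otherwise identical.

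To bound $J_1$ I will apply Corollary \ref{cor:3.7}---specifically \eqref{eq:3.59} for even $\ell$ and \eqref{eq:3.60} for odd $\ell$---with $q=1$ and $\tilde{\alpha}=\tilde{\ell}=0$, obtaining the pointwise-in-$\tau$ bound $(1+t-\tau)^{-\frac{5}{2}(1-\frac{1}{p})+\frac{1}{2}-\frac{\alpha+\ell}{2}}\|f(\tau)\|_1$; integrating this against $\|f(\tau)\|_1 \le C(1+\tau)^{-2}$ over $[0,t/2]$ yields the desired rate, as in \eqref{eq:7.28}. For $J_2$ I will apply the mean value theorem in $t$, writing $G_{1L}^{(\beta)}(t-\tau,\cdot)-G_{1L}^{(\beta)}(t,\cdot) = -\tau\, \partial_t G_{1L}^{(\beta)}(t-\theta\tau,\cdot)$ for some $\theta \in [0,1]$, and then invoke the $L^p$-decay of $\nabla^{\alpha+\ell}\partial_t G_{1L}^{(\beta)}$; the logarithmic factor $\log(t+2)$ that arises (exactly as in \eqref{eq:7.29}) is absorbed by the target rate. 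For $J_3$ I will split the $y$-integral at $|y|=t^{1/4}$, bounding the inner region by the mean value theorem in $x$ combined with the decay of $\nabla G_{1L}^{(\beta)}(t)$, and extracting an $o(1)$-factor from the outer region via $f \in L^1((0,\infty);L^1(\R^3))$, reproducing the dichotomy \eqref{eq:7.31}--\eqref{eq:7.32}.

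The only genuinely new point, compared to Proposition \ref{prop:7.5}, is that the $L^p$-decay of $G_{jL}^{(\beta)}(t)$ and $\partial_t G_{jL}^{(\beta)}(t)$ must now hold also at the endpoint $p=1$; for $1 < p \le \infty$ this is \eqref{eq:3.50}--\eqref{eq:3.51}, while the endpoint $p=1$ is obtained exactly as in the proof of Corollary \ref{cor:7.2}, by writing $G_{jL}^{(\beta)}(t) = W_j^{(\beta)}(t)\,\mathcal{F}^{-1}[e^{-\nu t|\xi|^2/2}\chi_L]$ and combining \eqref{eq:2.16}--\eqref{eq:2.17} with the heat-type bound \eqref{eq:2.19}. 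I expect this endpoint verification to be the only subtle step; once it is in hand, the bookkeeping for $J_1$, $J_2$, $J_3$ is a direct transcription of \eqref{eq:7.28}--\eqref{eq:7.32} and completes the proof.
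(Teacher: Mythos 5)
Your proposal is correct and follows exactly the route the paper intends: the paper's own proof of Corollary \ref{cor:7.6} is the single remark that it ``can be shown by the same way as in Proposition \ref{prop:7.5}'', and your $J_1,J_2,J_3$ decomposition is a faithful transcription of the $I_1,I_2,I_3$ argument there, with Corollary \ref{cor:3.7} replacing Proposition \ref{Prop:3.1} for the kernel approximation. You have also correctly isolated the only genuinely new ingredient, namely the $L^1$-endpoint bounds \eqref{eq:7.10}--\eqref{eq:7.11} for the bare low-frequency kernels, which is precisely what extends the admissible range to $1\le p\le\infty$.
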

	Corollary \ref{cor:7.6} can be shown by the same way as in Proposition \ref{prop:7.5}.
\subsection{Proof of Theorems \ref{thm:1.5}}
From now on, $u_{N}(t)$ denotes the nonlinear term of the integral equation corresponding to \eqref{eq:1.1},
\begin{equation*} 
	\begin{split}
		u_{N}(t) & := \int_{0}^{t}
		(K_{1}^{(\sqrt{\lambda+2 \mu})}(t-\tau)-K_{1}^{(\sqrt{\mu})}(t-\tau)) \mathcal{F}^{-1} [\mathcal{P} \hat{F}(u)(\tau) ] d \tau \\
		& + \int_{0}^{t} K_{1}^{(\sqrt{\mu})}(t-\tau) F(u)(\tau) d \tau
	\end{split}
\end{equation*}
for the simplicity.
\begin{proof}[Proof of Theorem  \ref{thm:1.5}]
	Let $F(u)=\nabla u \nabla^{2} u$.
	For the proof, we firstly claim that 
	\begin{align}
		& \| \nabla^{\alpha} (u_{N}(t)-G_{N}(t)) \|_{2} = o(t^{-\frac{1}{4}-\frac{\alpha}{2}}), \quad 1 \le \alpha \le 3, \label{eq:7.35} \\
		& \| \nabla^{\alpha} (u_{N}(t)-G_{N}(t)) \|_{\infty} =o(t^{-\frac{3}{2}-\frac{\alpha}{2}}), \quad 0 \le \alpha \le 1, \label{eq:7.36}  \\
		& \| \nabla^{\alpha} (\partial_{t}u_{N}(t) -H_{N}(t)) \|_{2} =o(t^{-\frac{3}{4}-\frac{\alpha}{2}}), \quad 0 \le \alpha \le 2, \label{eq:7.37}  \\
		& \| \nabla^{2} (\partial_{t} u_{N}(t) -H_{N}(t)) \|_{p} =o( t^{-\frac{5}{2}(1-\frac{1}{p})-\frac{1}{2}} ), \quad 2 \le p <6, \label{eq:7.38}  \\
		& \| \nabla^{\alpha} (\partial_{t} u_{N}(t) -H_{N}(t))\|_{\infty}
		=o( t^{-2-\frac{\alpha}{2}}),\quad 0 \le \alpha \le 1, \label{eq:7.39}  \\
		& \| \partial_{t}^{2} u_{N}(t) -\tilde{G}_{N}(t) \|_{p} = o(t^{-\frac{5}{2}(1-\frac{1}{p})-\frac{1}{2}}),  \quad 2 \le p <6 \label{eq:7.40} 
	\end{align}
	as $t \to \infty$.
	
	At first,
	we collect the estimates for the nonlinear term.
	Namely, we easily see that 
	\begin{equation} \label{eq:7.41}
		\begin{split}
			\left| \int_{0}^{\frac{t}{2}} \int_{\R^{3}} F(u)(\tau, y) dy d \tau \right| 
			\le C \int_{0}^{\frac{t}{2}} (1+\tau)^{-2} d \tau \le C 
		\end{split}
	\end{equation}
	and
	\begin{equation} \label{eq:7.42}
		\begin{split}
			\left|
			\int_{\frac{t}{2}}^{\infty} \int_{\R^{3}} F(u)(\tau, y) dy d \tau 
			\right| \le C \int_{\frac{t}{2}}^{\infty} (1+\tau)^{-2} d \tau \le C(1+t)^{-1}, 
		\end{split}
	\end{equation}
	where we used \eqref{eq:5.4} for $F(u)=\nabla u \nabla^{2} u$.
	Thus, by \eqref{eq:2.11}, \eqref{eq:2.12} and \eqref{eq:7.41}, we have 
	\begin{equation} \label{eq:7.43}
		\begin{split}
			& \| \partial_{t}^{\ell} \nabla^{\alpha} (G_{0M}(t) +G_{0H}(t)) \|_{p}
			\left|
			\int_{0}^{\frac{t}{2}} \int_{\R^{3}} F(u)(\tau, y) dy d \tau 
			\right| \\
			& +
			\| \partial_{t}^{\ell} \nabla^{\alpha} 
			\mathcal{F}^{-1} [\mathcal{G}^{(\beta)}_{0}(t,\xi)(\chi_{M} +\chi_{H}) \mathcal{P}] \|_{p}
			\left|
			\int_{0}^{\frac{t}{2}} \int_{\R^{3}} F(u)(\tau, y) dy d \tau 
			\right| \\
			& \le Ce^{-ct} t^{-\frac{3}{2}(1-\frac{1}{p})-\frac{\alpha+\ell}{2}}
		\end{split}
	\end{equation}
	and
	\begin{equation} \label{eq:7.44}
		\begin{split}
			& \| \partial_{t}^{\ell} \nabla^{\alpha} (G_{1M}(t) +G_{1H}(t)) \|_{p}
			\left|
			\int_{0}^{\frac{t}{2}} \int_{\R^{3}} F(u)(\tau, y) dy d \tau 
			\right| \\
			& +
			\| \partial_{t}^{\ell} \nabla^{\alpha} 
			\mathcal{F}^{-1} [\mathcal{G}^{(\beta)}_{1}(t,\xi) (\chi_{M} +\chi_{H})  \mathcal{P}] \|_{p}
			\left|
			\int_{0}^{\frac{t}{2}} \int_{\R^{3}} F(u)(\tau, y) dy d \tau 
			\right| \\
			& \le Ce^{-ct} t^{-\frac{3}{2}(1-\frac{1}{p})-\frac{\alpha+\ell}{2}}
		\end{split}
	\end{equation}
	for $1 \le p \le \infty$ and $\alpha, \ell \ge 0$.
	Likewise, we also have 
	\begin{equation} \label{eq:7.45}
		\begin{split}
			& \| \partial_{t}^{\ell} \nabla^{\alpha} G_{0}(t) \|_{p}
			\left|
			\int_{\frac{t}{2}}^{\infty} \int_{\R^{3}} F(u)(\tau, y) dy d \tau 
			\right| \\
			& +
			\| \partial_{t}^{\ell} \nabla^{\alpha} 
			\mathcal{F}^{-1} [\mathcal{G}^{(\beta)}_{0}(t,\xi) \mathcal{P} ] \|_{p}
			\left|
			\int_{\frac{t}{2}}^{\infty} \int_{\R^{3}} F(u)(\tau, y) dy d \tau 
			\right| \le C t^{-\frac{5}{2}(1-\frac{1}{p})-\frac{1}{2}-\frac{\alpha+\ell}{2}}
		\end{split}
	\end{equation}
	by \eqref{eq:2.9}, \eqref{eq:3.50} and \eqref{eq:2.11},
	and

	\begin{equation} \label{eq:7.46}
		\begin{split}
			& \| \partial_{t}^{\ell} \nabla^{\alpha} G_{1}(t) \|_{p}
			\left|
			\int_{\frac{t}{2}}^{\infty} \int_{\R^{3}} F(u)(\tau, y) dy d \tau 
			\right| \\
			& +
			\| \partial_{t}^{\ell} \nabla^{\alpha} 
			\mathcal{F}^{-1} [\mathcal{G}^{(\beta)}_{1}(t,\xi) \mathcal{P} ] \|_{p}
			\left|
			\int_{\frac{t}{2}}^{\infty} \int_{\R^{3}} F(u)(\tau, y) dy d \tau 
			\right| \le C t^{-\frac{5}{2}(1-\frac{1}{p})-\frac{\alpha+\ell}{2}}
		\end{split}
	\end{equation}
	by \eqref{eq:2.10}, \eqref{eq:3.51} and \eqref{eq:2.12}, 
	where $1<p \le \infty$ for $\ell+\alpha=0$ and $1 \le p \le \infty$ for $\ell+\alpha \ge 1$.

	We now turn to the proof of estimates \eqref{eq:7.35}-\eqref{eq:7.40}. 
	We only show the estimate \eqref{eq:7.35}. 
	The other estimates are proved in a similar way.
	For this aim, we use the following decomposition:
	\begin{equation*} 
		\begin{split}
			u_{N}(t)-G_{N}(t) & = J_{0, \mathcal{P}, \lambda+2 \mu}- J_{0,\mathcal{P},\mu}+ J_{0,\phi, \mu},
		\end{split}
	\end{equation*}
where
\begin{equation*} 
	\begin{split}
		&J_{0, \mathcal{P}, \beta} 
		 := \\
		& \int_{0}^{\frac{t}{2}}
		K_{1L}^{(\sqrt{\beta})}(t-\tau) \mathcal{F}^{-1} [\mathcal{P} \hat{F}(u)(\tau) ] d \tau 
		-
		\mathcal{F}^{-1}
		\left[
		\mathcal{G}_{1}^{(\sqrt{\beta})}(t,\xi) \chi_{L} \mathcal{P}
		\right]\, \int_{0}^{\frac{t}{2}} \int_{\R^{3}} F(u)(\tau, y) dy d \tau 
		 \\
		 & - \mathcal{F}^{-1}
		 \left[
		 \mathcal{G}_{1}^{(\sqrt{\beta})}(t,\xi) (\chi_{M}+\chi_{H}) \mathcal{P}
		 \right]\, \int_{0}^{\frac{t}{2}} \int_{\R^{3}} F(u)(\tau, y) dy d \tau \\
		& +\int_{\frac{t}{2}}^{t}
		K_{1L}^{(\sqrt{\beta})}(t-\tau) \mathcal{F}^{-1} [\mathcal{P} \hat{F}(u)(\tau) ] d \tau
		 +\sum_{k=M,H} \int_{0}^{t}
		K_{1k}^{(\sqrt{\beta})}(t-\tau) \mathcal{F}^{-1} [\mathcal{P} \hat{F}(u)(\tau) ] d \tau \\
		& -
		\mathcal{F}^{-1}
		\left[
		\mathcal{G}_{1}^{(\sqrt{\beta})}(t,\xi) \mathcal{P}
		\right]\, \int_{\frac{t}{2}}^{\infty} \int_{\R^{3}} F(u)(\tau, y) dy d \tau 
	\end{split}
\end{equation*}
and 
\begin{equation*} 
	\begin{split}
		J_{0, \phi, \beta} 
		& :=
		\int_{0}^{\frac{t}{2}}
		K_{1L}^{(\sqrt{\beta})}(t-\tau) F(u)(\tau)  d \tau  -
		G_{1L}^{(\sqrt{\beta})}(t)  \int_{0}^{\frac{t}{2}} \int_{\R^{3}} F(u)(\tau, y) dy d \tau 
		\\
		& -
		(G_{1M}^{(\sqrt{\beta})}(t)+G_{1M}^{(\sqrt{\beta})}(t))  \int_{0}^{\frac{t}{2}} \int_{\R^{3}} F(u)(\tau, y) dy d \tau \\
		& +\int_{\frac{t}{2}}^{t}
		K_{1L}^{(\sqrt{\beta})}(t-\tau) F(u)(\tau) d \tau
		 +\sum_{k=M,H} \int_{0}^{t}
		K_{1k}^{(\sqrt{\beta})}(t-\tau) F(u)(\tau) d \tau \\
		& -
		G_{1}^{(\sqrt{\beta})}(t) \int_{\frac{t}{2}}^{\infty} \int_{\R^{3}} F(u)(\tau, y) dy d \tau. 
	\end{split}
\end{equation*}
It is easy to see that 
\begin{equation} \label{eq:7.47}
	\begin{split}
		\| \nabla^{\alpha} J_{0, \mathcal{P}, \beta} \|_{2} 
		& \le o(t^{-\frac{1}{4}-\frac{\alpha}{2}} ) +C \int_{\frac{t}{2}}^{t}
		(1+t-\tau)^{1-\frac{\alpha}{2}} \| F(u)(\tau) \|_{2}  d \tau\\
		& +\sum_{k=M,H} \int_{0}^{t}
		e^{-c(t-\tau)} \| \nabla F(u)(\tau) \|_{2} d \tau
		 +
	  C t^{-\frac{5}{4}-\frac{\alpha}{2}} \\
		 & \le o(t^{-\frac{1}{4}-\frac{\alpha}{2}} ) +C \int_{\frac{t}{2}}^{t}
		 (1+t-\tau)^{1-\frac{\alpha}{2}} (1+\tau)^{-\frac{11}{4}}  d \tau \\
		& +C\int_{0}^{t}
		 e^{-c(t-\tau)} (1+\tau)^{-\frac{13}{4}}d \tau
		 \\
		& \le o(t^{-\frac{1}{4}-\frac{\alpha}{2}} ) +C (1+t)^{-\frac{3}{4}-\frac{\alpha}{2}}=o(t^{-\frac{1}{4}-\frac{\alpha}{2}} ) 
	\end{split}
\end{equation}
as $t \to \infty$ for $1 \le \alpha \le 3$, by \eqref{eq:7.25}, \eqref{eq:3.49}, \eqref{eq:7.44} and \eqref{eq:7.46}.
By the same manner, we also have 
\begin{equation} \label{eq:7.48}
	\begin{split}
		\| \nabla^{\alpha} J_{0, \phi, \beta} \|_{2} =o(t^{-\frac{1}{4}-\frac{\alpha}{2}} ) 
	\end{split}
\end{equation}
as $t \to \infty$ for $1 \le \alpha \le 3$.
%
Noting \eqref{eq:7.47} and \eqref{eq:7.48}, 
we obtian
\begin{equation*} 
	\begin{split}
		\| \nabla^{\alpha} (u_{N}(t)-G_{N}(t))\|_{2} & \le  
		\| \nabla^{\alpha} J_{0, \mathcal{P}, \lambda+2 \mu}\|_{2}+\| \nabla^{\alpha} J_{0,\mathcal{P},\mu} \|_{2}+\| \nabla^{\alpha}  J_{0,\phi,\mu} \|_{2} \\
		& =o(t^{-\frac{1}{4}-\frac{\alpha}{2}} ) 
	\end{split}
\end{equation*}
as $t \to \infty$ for $1 \le \alpha \le 3$, which is the desired estimate \eqref{eq:7.35}.

	Once we have \eqref{eq:7.35}-\eqref{eq:7.40} combined with the estimates \eqref{eq:7.12}-\eqref{eq:7.17},
	we immediately conclude the desired estimates \eqref{eq:1.12}-\eqref{eq:1.17}, 
	since $u(t) =u_{lin}(t) +u_{N}(t)$.
	We complete the proof of Theorem \ref{thm:1.5}.
\end{proof}

%
\section{Appendix}
\begin{proof}[Proof of \eqref{eq:3.36}]
	We first observe that
	\begin{equation} \label{eq:a.1}
		\begin{split}
			\nabla_{\xi}^{k} |\xi|= O(|\xi|^{1-k})
		\end{split}
	\end{equation}
    and 
	\begin{equation} \label{eq:a.2}
		\begin{split}
			 \nabla_{\xi}^{k} \{ ( \phi_{\nu,\beta}-1)|\xi| \}=  \nabla_{\xi}^{k} O(|\xi|^{3-k}) =O(|\xi|^{2})
		\end{split}
	\end{equation}
	as $|\xi| \to 0$  for $k \in \mathbb{N}$ by \eqref{eq:3.18}.
	Now we decompose the integrand into three parts;
	
	\begin{equation} \label{eq:a.3}
		\begin{split}
			& \nabla_{\xi}^{2} \{ \sin (\beta |\xi| \phi_{\nu,\beta} t)- \sin (\beta |\xi| t)- \beta |\xi| t(\phi_{\nu,\beta}-1)\cos (\beta |\xi| t) \}
			=
			\mathcal{B}_{1}+\mathcal{B}_{2}+\mathcal{B}_{3},
		\end{split}
	\end{equation}
	where 
	\begin{equation*} 
		\begin{split}
			\mathcal{B}_{1} & := (\beta t)^{2}
			\{ -\sin (\beta |\xi| \phi_{\nu,\beta} t) (\nabla_{\xi}(|\xi| \phi_{\nu,\beta}) )^{2} +\sin (\beta |\xi|  t) (\nabla_{\xi}|\xi|)^{2} \\
			& +\beta t (\phi_{\nu,\beta}-1)|\xi| \cos (\beta |\xi|  t) (\nabla_{\xi}|\xi|)^{2} \},
		\end{split}
	\end{equation*}
	\begin{equation*}
		\begin{split}
			\mathcal{B}_{2} & := (\beta t)^{2}
			\{ \cos (\beta |\xi| \phi_{\nu,\beta} t) \nabla_{\xi}^{2}(|\xi| \phi_{\nu,\beta}) -\cos (\beta |\xi|  t) \nabla_{\xi}^{2}|\xi|\}, \\
		\end{split}
	\end{equation*}
	\begin{equation*}
		\begin{split}
			\mathcal{B}_{3} & := (-\beta t) \nabla_{\xi}^{2} [(\phi_{\nu,\beta}-1)|\xi|]
			\cos (\beta |\xi| \phi_{\nu,\beta} t) \\ 
			& +2 (\beta t)^{2} \nabla_{\xi}[(\phi_{\nu,\beta}-1)|\xi|] \sin (\beta |\xi|  t) \nabla_{\xi}^{2}|\xi|.
		\end{split}
	\end{equation*}
	%
	Then it follows from \eqref{eq:a.2} and \eqref{eq:3.21} that 
	\begin{equation} \label{eq:a.4}
		\begin{split}
			\mathcal{B}_{1} & = (\beta t)^{2}(-\sin (\beta |\xi| \phi_{\nu,\beta} t)) \{ (\nabla_{\xi}(|\xi| \phi_{\nu,\beta}) )^{2}-(\nabla_{\xi}|\xi| )^{2} \} \\
			& + (\beta t)^{2} \{ -\sin (\beta |\xi| \phi_{\nu,\beta} t)  +\sin (\beta |\xi|  t)  +\beta t (\phi_{\nu,\beta}-1)|\xi| \cos (\beta |\xi|  t) \} (\nabla_{\xi}|\xi|)^{2} \\
			& =to(|\xi|^{2}) +t^{4} O(|\xi|^{6})
		\end{split}
	\end{equation}
	as $|\xi| \to 0$.
	$\mathcal{B}_{2}$ is estimated by \eqref{eq:a.1}, \eqref{eq:a.2} and \eqref{eq:3.26} as follows;
	\begin{equation} \label{eq:a.5}
		\begin{split}
			\mathcal{B}_{2} & = \beta t
			\{ \cos (\beta |\xi| \phi_{\nu,\beta} t) \nabla_{\xi}^{2}(|\xi| (\phi_{\nu,\beta}-1)) +
			(\cos (\beta |\xi| \phi_{\nu,\beta} t) - \cos (\beta |\xi|  t)) \nabla_{\xi}^{2}|\xi|\} \\
			& =t^{2} O(|\xi|^{2}) +t^{4} O(|\xi|^{6})
		\end{split}
	\end{equation}
as $|\xi| \to 0$. In the same manner we can see that
	\begin{equation} \label{eq:a.6}
		\begin{split}
			\mathcal{B}_{3} & = -\beta t \nabla_{\xi}^{2}(|\xi| (\phi_{\nu,\beta}-1)) \cos (\beta |\xi|  t) \\
			& +2(\beta t)^{2} \nabla_{\xi}(|\xi| (\phi_{\nu,\beta}-1))
			 \sin (\beta |\xi|  t)) \nabla_{\xi}|\xi| \\
			& =tO(|\xi|) +t^{2} O(|\xi|^{2})
		\end{split}
	\end{equation}
as $|\xi| \to 0$ by \eqref{eq:a.1} and \eqref{eq:a.2}. 
	Combining \eqref{eq:a.4}-\eqref{eq:a.6}, we have the desired estimate \eqref{eq:3.36},
	since we used the assumption $\xi \in \supp \chi_{L}$.
\end{proof}


%
\vspace*{5mm}
\noindent
\textbf{Acknowledgments. } 

Y. Kagei was supported in part by JSPS Grant-in-Aid for Scientific Research (A) 20H00118.
H. Takeda was partially supported by JSPS Grant-in-Aid for Scientific Research (C) 19K03596.

\bibliographystyle{siam} 
\bibliography{mybibfile}

\begin{thebibliography}{10}

\bibitem{Agemi}
{\sc R.~Agemi}, {\em Global existence of nonlinear elastic waves}, Invent.
  Math., 142 (2000), pp.~225--250.

\bibitem{Alinhac}
{\sc S.~Alinhac}, {\em Hyperbolic partial differential equations},
  Universitext, Springer, Dordrecht, 2009.

\bibitem{A}
{\sc G.~Andrews}, {\em On the existence of solutions to the equation
  {$u_{tt}=u_{xxt}+\sigma (u_{x})_{x}$}}, J. Differential Equations, 35 (1980),
  pp.~200--231.

\bibitem{B}
{\sc J.~Bergh and J.~L\"{o}fstr\"{o}m}, {\em Interpolation spaces. {A}n
  introduction}, Grundlehren der Mathematischen Wissenschaften, No. 223,
  Springer-Verlag, Berlin-New York, 1976.

\bibitem{BTW}
{\sc P.~Brenner, V.~Thom\'{e}e, and L.~B. Wahlbin}, {\em Besov spaces and
  applications to difference methods for initial value problems}, Lecture Notes
  in Mathematics, Vol. 434, Springer-Verlag, Berlin-New York, 1975.

\bibitem{C}
{\sc T.~Cazenave}, {\em Semilinear {S}chr\"{o}dinger equations}, vol.~10 of
  Courant Lecture Notes in Mathematics, New York University, Courant Institute
  of Mathematical Sciences, New York; American Mathematical Society,
  Providence, RI, 2003.

\bibitem{Chris}
{\sc D.~Christodoulou}, {\em Global solutions of nonlinear hyperbolic equations
  for small initial data}, Comm. Pure Appl. Math., 39 (1986), pp.~267--282.

\bibitem{D-R}
{\sc M.~D'Abbicco and M.~Reissig}, {\em Semilinear structural damped waves},
  Math. Methods Appl. Sci., 37 (2014), pp.~1570--1592.

\bibitem{E}
{\sc L.~C. Evans}, {\em Partial differential equations}, vol.~19 of Graduate
  Studies in Mathematics, American Mathematical Society, Providence, RI,
  second~ed., 2010.

\bibitem{GGH}
{\sc M.~Ghisi, M.~Gobbino, and A.~Haraux}, {\em Local and global smoothing
  effects for some linear hyperbolic equations with a strong dissipation},
  Trans. Amer. Math. Soc., 368 (2016), pp.~2039--2079.

\bibitem{Gr}
{\sc L.~Grafakos}, {\em Classical {F}ourier analysis}, vol.~249 of Graduate
  Texts in Mathematics, Springer, New York, second~ed., 2008.

\bibitem{GMcM}
{\sc J.~M. Greenberg, R.~C. MacCamy, and V.~J. Mizel}, {\em On the existence,
  uniqueness, and stability of solutions of the equation {$\sigma ^{\prime}
  \,(u_{x})u_{xx}+\lambda u_{xtx}=\rho _{0}u_{tt}$}}, J. Math. Mech., 17
  (1967/1968), pp.~707--728.

\bibitem{H-Z1}
{\sc D.~Hoff and K.~Zumbrun}, {\em Multi-dimensional diffusion waves for the
  {N}avier-{S}tokes equations of compressible flow}, Indiana Univ. Math. J., 44
  (1995), pp.~603--676.

\bibitem{H-Z2}
\leavevmode\vrule height 2pt depth -1.6pt width 23pt, {\em Pointwise decay
  estimates for multidimensional {N}avier-{S}tokes diffusion waves}, Z. Angew.
  Math. Phys., 48 (1997), pp.~597--614.

\bibitem{I}
{\sc R.~Ikehata}, {\em Asymptotic profiles for wave equations with strong
  damping}, J. Differential Equations, 257 (2014), pp.~2159--2177.

\bibitem{I-T}
{\sc R.~Ikehata and H.~Takeda}, {\em Asymptotic profiles of solutions for
  structural damped wave equations}, J. Dynam. Differential Equations, 31
  (2019), pp.~537--571.

\bibitem{J-S}
{\sc B.~Jonov and T.~C. Sideris}, {\em Global and almost global existence of
  small solutions to a dissipative wave equation in 3{D} with nearly null
  nonlinear terms}, Commun. Pure Appl. Anal., 14 (2015), pp.~1407--1442.

\bibitem{Kagei-T2}
{\sc Y.~Kagei and H.~Takeda}, {\em Smoothing effect and asymptotic behavior of
  solutions to nonlinear elastic wave equations with viscoelastic terms in the
  framework of $l^{p}$-sobolev spaces}, preprint.

\bibitem{KS}
{\sc S.~Kawashima and Y.~Shibata}, {\em Global existence and exponential
  stability of small solutions to nonlinear viscoelasticity}, Comm. Math.
  Phys., 148 (1992), pp.~189--208.

\bibitem{Klainerman}
{\sc S.~Klainerman}, {\em The null condition and global existence to nonlinear
  wave equations}, in Nonlinear systems of partial differential equations in
  applied mathematics, {P}art 1 ({S}anta {F}e, {N}.{M}., 1984), vol.~23 of
  Lectures in Appl. Math., Amer. Math. Soc., Providence, RI, 1986,
  pp.~293--326.

\bibitem{KPS}
{\sc T.~Kobayashi, H.~Pecher, and Y.~Shibata}, {\em On a global in time
  existence theorem of smooth solutions to a nonlinear wave equation with
  viscosity}, Math. Ann., 296 (1993), pp.~215--234.

\bibitem{K-S}
{\sc T.~Kobayashi and Y.~Shibata}, {\em Remark on the rate of decay of
  solutions to linearized compressible {N}avier-{S}tokes equations}, Pacific J.
  Math., 207 (2002), pp.~199--234.

\bibitem{Ponce}
{\sc G.~Ponce}, {\em Global existence of small solutions to a class of
  nonlinear evolution equations}, Nonlinear Anal., 9 (1985), pp.~399--418.

\bibitem{PoF}
{\sc M.~Potier-Ferry}, {\em On the mathematical foundations of elastic
  stability theory. {I}}, Arch. Rational Mech. Anal., 78 (1982), pp.~55--72.

\bibitem{S-S}
{\sc J.~Shatah and M.~Struwe}, {\em Geometric wave equations}, vol.~2 of
  Courant Lecture Notes in Mathematics, New York University, Courant Institute
  of Mathematical Sciences, New York; American Mathematical Society,
  Providence, RI, 1998.

\bibitem{Shibata}
{\sc Y.~Shibata}, {\em On the rate of decay of solutions to linear viscoelastic
  equation}, Math. Methods Appl. Sci., 23 (2000), pp.~203--226.

\bibitem{ST}
{\sc Y.~Shibata and Y.~Tsutsumi}, {\em Local existence of solution for the
  initial-boundary value problem of fully nonlinear wave equation}, Nonlinear
  Anal., 11 (1987), pp.~335--365.

\bibitem{Sideris}
{\sc T.~C. Sideris}, {\em The null condition and global existence of nonlinear
  elastic waves}, Invent. Math., 123 (1996), pp.~323--342.

\bibitem{Strauss}
{\sc W.~A. Strauss}, {\em Nonlinear wave equations}, vol.~73 of CBMS Regional
  Conference Series in Mathematics, Published for the Conference Board of the
  Mathematical Sciences, Washington, DC; by the American Mathematical Society,
  Providence, RI, 1989.

\bibitem{Takeda}
{\sc H.~Takeda}, {\em Large time behavior of solutions to elastic wave with
  structural damping}, preprint.

\end{thebibliography}
\end{document}